\newtheorem{theorem}{Theorem}[section]
\newtheorem{lemma}[theorem]{Lemma}
\newtheorem{proposition}[theorem]{Proposition}
\newtheorem{conjecture}[theorem]{Conjecture}
\theoremstyle{definition}
\newtheorem{definition}[theorem]{Definition}
\newtheorem{remark}[theorem]{Remark}
\newtheorem{example}[theorem]{Example}
\newtheorem{question}[theorem]{Open question}
\newcommand{\Q}{\mathbb{Q}}
\newcommand{\F}{\mathbb F}
\newcommand{\Z}{\mathbb Z}
\newcommand{\lam}{\Lambda}
\newcommand{\wt}{\widetilde}
\newcommand{\ol}{\overline}
\newcommand{\seq}[1]{\cite[\href{http://oeis.org/#1}{#1}]{OEIS}}
\newcommand{\nequiv}{\mathrel{\not\equiv}}
\newcommand{\colonequal}{\mathrel{\mathop:}=}
\newcommand{\vcentergraphics}[1]{\ensuremath{\vcenter{\hbox{\includegraphics{#1}}}}}
\begin{document}

\title{Automatic congruences for diagonals of rational functions}

\date{\today}

\author{{\bf Eric Rowland}\\
Universit\'e du Qu\'ebec \`a Montr\'eal, Montr\'eal, Canada\\
\texttt{rowland@lacim.ca}\\
\\
{\bf Reem Yassawi}\thanks{Partially supported by an NSERC grant.}\\
Trent University, Peterborough, Canada\\
\texttt{ryassawi@trentu.ca}}

\maketitle

\begin{abstract} 
In this paper we use the framework of automatic sequences to study combinatorial sequences modulo prime powers.
Given a sequence whose generating function is the diagonal of a rational power series, we provide a method, based on work of Denef and Lipshitz, for computing a finite automaton for the sequence modulo $p^\alpha$, for all but finitely many primes $p$.
This method gives completely automatic proofs of known results, establishes a number of new theorems for well-known sequences, and allows us to resolve some conjectures regarding the Ap\'ery numbers.
We also give a second method, which applies to an algebraic sequence modulo $p^\alpha$ for all primes $p$, but is significantly slower.
Finally, we show that a broad range of multidimensional sequences possess Lucas products modulo $p$.
\end{abstract}

\section{Introduction}\label{Introduction}

\subsection{Overview}\label{Overview}

A sequence $(a_n)_{n \geq 0}$ of entries in a field $F$ is \emph{algebraic} if its generating function $\sum_{n \geq 0} a_n x^n$ is algebraic over $F(x)$, the field of rational expressions with coefficients in $F$.
A great many combinatorial sequences are algebraic. Examples include the  Catalan and Motzkin numbers, whose generating functions are algebraic over $\Q(x)$, and the Fibonacci sequence, which  satisfies  a linear recurrence with constant coefficients and hence has a rational generating function.

In the past decade, many researchers have been interested in congruences for various algebraic sequences modulo prime powers. 
Deutsch and Sagan~\cite{Deutsch--Sagan} studied arithmetic properties of several sequences, including the Catalan and Motzkin numbers.
They posited conjectures regarding Motzkin numbers modulo $4$ and $8$, which were proved by Eu, Liu, and Yeh~\cite{Eu--Liu--Yeh}.
Congruences for Catalan numbers have also been studied by Liu and Yeh~\cite{Liu--Yeh}, Xin and  Xu~\cite{Xin--Xu}, and Lin~\cite{Lin}.
The techniques used to prove these results depend to some extent on the particular sequences considered, and in some cases the proofs occupy entire papers.
Kauers, Krattenthaler, and M\"uller developed the first systematic methods for producing congruences modulo $2^{\alpha}$ in \cite{Kauers--Krattenthaler--Muller} and modulo $3^{\alpha}$ in \cite{Krattenthaler--Muller} for a large family of differentially algebraic sequences, including many algebraic sequences. As examples they produce automatic proofs of many existing results.

In this paper we show how to discover and prove congruences for algebraic sequences over $\Q(x)$
 in a general fashion --- for any algebraic sequence modulo any prime power.
A natural setting for these results is that  of automatic sequences.
A \emph{$p$-automatic sequence} is a sequence $(a_n)_{n \geq 0}$ on a finite alphabet, where $a_n$ is the output of a finite-state automaton when fed the standard base-$p$ representation of $n$.
We postpone the formal definition until Section~\ref{Notation}.
The following result provides a fundamental link between automaticity and algebraicity; let $\F_p$ denote the finite field of size $p$.

\begin{theorem}[Christol et al.~\cite{CKMR}]\label{Christol} Let 
 $(a_n)_{n \geq 0}$ be a sequence of elements in $\F_p$. Then
 $\sum_{n \geq 0}a_{n}x^{n}$ is algebraic over $\F_{p}(x)$ if and only if 
$(a_{n})_{n \geq 0}$ is $p$-automatic.
\end{theorem}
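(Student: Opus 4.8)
The plan is to pass to the \emph{$p$-kernel}. Writing $f\colonequal\sum_{n\ge 0}a_n x^n$ and introducing, for $0\le r<p$, the \emph{section operator} $\lam_r\bigl(\sum_n c_n x^n\bigr)\colonequal\sum_n c_{pn+r}\,x^n$, one has the decomposition $f(x)=\sum_{r=0}^{p-1}x^r(\lam_r f)(x^p)$, and $(a_n)_{n\ge 0}$ is $p$-automatic if and only if the set $\mathcal K\colonequal\{\lam_{r_k}\cdots\lam_{r_1}f : k\ge 0,\ 0\le r_i<p\}$ of power series is finite. That this reformulation agrees with the automaton-based definition of Section~\ref{Notation} is Eilenberg's theorem, which is purely combinatorial and which I will take for granted. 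It then remains to prove two implications: (i) if $f$ is algebraic over $\F_p(x)$ then $\mathcal K$ is finite; and (ii) if $\mathcal K$ is finite then $f$ is algebraic over $\F_p(x)$. Direction (i) carries essentially all of the difficulty.

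For (i), the two properties of the section operators that drive the argument are
\[
\lam_r\!\bigl(h\cdot g^p\bigr)=g\cdot\lam_r(h)\qquad\text{and}\qquad\lam_r(x^s)=\begin{cases}1,&s=r,\\[2pt]0,&0\le s<p,\ s\ne r,\end{cases}
\]
the first valid for $h\in\F_p[[x]]$ and $g\in\F_p[[x]]$, and --- after clearing a denominator --- for $g\in\F_p(x)$; both follow from the decomposition above together with the characteristic-$p$ identity $g(x)^p=g(x^p)$. First I would clear denominators in, and rescale, an algebraic equation for $f$ so as to replace $f$ by an element $g\in\F_p(x)(f)$ that is \emph{integral} over $\F_p[x]$, say of degree $d$; then $f$ lies in the finite-dimensional $\F_p(x)$-vector space $V\colonequal\F_p(x)+\F_p(x)g+\dots+\F_p(x)g^{d-1}$. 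A short but useful observation is that $g$ is \emph{separable} over $\F_p(x)$: for otherwise some $c\in\F_p(x)\setminus\F_p(x^p)$ would possess a $p$-th root inside $\F_p(x)(g)\subseteq\F_p((x))$, which is impossible because a $p$-th power in $\F_p((x))$ involves only exponents divisible by $p$, whereas $c$, not lying in $\F_p(x^p)$, does not. Hence $\F_p(x)(g^p)=\F_p(x)(g)$, so $\{1,g^p,\dots,g^{(d-1)p}\}$ is again a basis of $V$; writing each $g^i$ in this basis and expanding each $\F_p(x)$-coefficient $c$ as $\sum_{s=0}^{p-1}x^s c_s^p$ with $c_s\in\F_p(x)$, the two displayed properties show that $V$ is stable under every $\lam_r$ and that the expansions $\lam_r(g^i)=\sum_j u_{ijr}(x)\,g^j$ have $u_{ijr}\in\F_p(x)$ with denominators dividing a single fixed polynomial and with numerator degrees bounded by an absolute constant. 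The remaining --- and crucial --- step is to see that $\mathcal K$ is confined to a finite subset of $V$: concretely, one checks that $f$ and all its $\lam$-iterates lie in the inverse different $\mathfrak{d}^{-1}$ of $\F_p[x][g]$ over $\F_p[x]$, a fixed finitely generated $\F_p[x]$-module with denominator the discriminant of the minimal polynomial of $g$ and which is stable under the section operators, and that the numerator degrees occurring in these iterates do not grow, since applying a $\lam_r$ essentially divides exponents by $p$. Within $\mathfrak{d}^{-1}$ the elements of bounded numerator degree form a finite set, so $\mathcal K$ is finite.

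Direction (ii) is short. If $\mathcal K=\{f_1,\dots,f_m\}$ with $f_1=f$, then, $\mathcal K$ being closed under the $\lam_r$, there are indices $\sigma(j,r)$ with $\lam_r f_j=f_{\sigma(j,r)}$, and the decomposition of $f_j$ together with $f_{\sigma(j,r)}(x^p)=f_{\sigma(j,r)}(x)^p$ yields
\[
f_j=\sum_{r=0}^{p-1}x^r\,f_{\sigma(j,r)}^{\,p}\qquad(1\le j\le m).
\]
Put $L\colonequal\F_p(x)(f_1,\dots,f_m)$. These relations give $L\subseteq\F_p(x)\bigl(f_1^p,\dots,f_m^p\bigr)=\F_p(x)\cdot L^p=L^p(x)$, and since $x^p\in L^p$ the element $x$ is a root of $T^p-x^p\in L^p[T]$, so $[L:L^p]\le p$. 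As $\F_p$ is perfect and $L$ is a finitely generated extension of $\F_p$ containing the transcendental element $x$, one has $[L:L^p]=p^{\operatorname{trdeg}(L/\F_p)}$; hence $\operatorname{trdeg}(L/\F_p)=1$, so $L$ is algebraic over $\F_p(x)$, and in particular $f=f_1$ is algebraic over $\F_p(x)$.

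I expect the main obstacle to be the last step of (i) --- controlling, uniformly in the length of the $\lam$-word, both the denominators (via the inverse different) and the numerator degrees of the elements of $\mathcal K$, so that they genuinely lie in a finite set. The rest --- the integral rescaling, the separability remark, the $\lam_r$-stability of $V$, and all of (ii) --- is comparatively routine.
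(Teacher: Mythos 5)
The paper does not actually prove Theorem~\ref{Christol}; it cites \cite{CKMR} and points to \cite[Theorem~12.2.5]{ash}, whose proof of the harder direction goes through Ore's lemma. So there is no in-paper argument to compare against, and I will assess the proposal on its own. Your direction~(ii) is complete and correct, and the $[L:L^p]=p^{\operatorname{trdeg}(L/\F_p)}$ argument is a genuinely slicker route than the usual explicit linear-algebra elimination. The separability remark in~(i) is also correct, if terse: the real reason is that $x^{1/p}\notin\F_p((x))$, so $\F_p(x)(g)$ is linearly disjoint from $\F_p(x)^{1/p}=\F_p(x^{1/p})$ over $\F_p(x)$, which by Mac\,Lane's criterion gives separability; your phrasing (``some $c\in\F_p(x)\setminus\F_p(x^p)$ would possess a $p$-th root inside $\F_p(x)(g)$'') amounts to this with $c=x$.

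The genuine gap is exactly where you flag it: the final step of direction~(i). You assert that $f$ and all its $\lam$-iterates lie in the inverse different $\mathfrak d^{-1}$ of $\F_p[x][g]$ over $\F_p[x]$ with uniformly bounded numerator degrees, but you do not establish that $\mathfrak d^{-1}$ is stable under the $\lam_r$, that $f\in\mathfrak d^{-1}$, or the degree bound, and none of these is routine. The naive bookkeeping you sketch --- write $\phi=\sum_i a_i(x)g^i$, apply $\lam_r$ through the change of basis $g^i=\sum_j u_{ij}(x)g^{jp}$ --- does show numerator degrees contract, but it also shows the denominators pick up a fresh factor of the common denominator $D$ of the $u_{ij}$ at each step, so boundedness is \emph{not} automatic; there has to be cancellation, and explaining that cancellation is precisely the job of the finitely generated $\lam$-stable module you have not constructed. (A two-line example already makes the point: for $p=3$ and $g=\sqrt{1+x}$ one finds $\lam_1(g)=-g/(1+x)=-1/g$, which is \emph{not} integral over $\F_3[x]$ --- so the integral closure itself is not $\lam$-stable --- but does lie in $\mathfrak d^{-1}=g^{-1}\F_3[x][g]$; checking that this persists is the content you have deferred.) Proving $\lam_r(\mathfrak d^{-1})\subseteq\mathfrak d^{-1}$ amounts to a compatibility of the Cartier-type operators $\lam_r$ with the trace pairing (an adjointness of Cartier and Frobenius), which is a real theorem, not a ``one checks.'' Since this is the entire content of the hard implication, the proposal as written leaves that implication open. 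The proof the paper points to sidesteps all of this: Ore's lemma produces a relation $\sum_{i=0}^m B_i(x)f^{p^i}=0$ with $B_0\ne0$, and one then works inside an explicit finite $\F_p$-vector space of expressions built from $B_0 f$ and its iterated $p$-th powers with polynomial coefficients of bounded degree, for which $\lam_r$-stability and the degree bound fall out immediately from $\lam_r(h\,y^p)=y\,\lam_r(h)$; if you want to complete~(i), either carry out the trace/different compatibility honestly, or switch to that Ore-form argument.
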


A proof also appears in \cite[Theorem~12.2.5]{ash}.
It follows immediately that if $(a_n)_{n \geq 0}$ is an algebraic sequence of integers (or, more generally, $p$-adic integers), then $(a_n \bmod p)_{n \geq 0}$ is $p$-automatic, since projecting modulo $p$ a polynomial for which $\sum_{n \geq 0} a_n x^n$ is a root yields a polynomial for which $\sum_{n \geq 0} (a_n \bmod p) x^n$ is a root.

The proof of Theorem~\ref{Christol} is constructive in the sense that, given a polynomial for which $\sum_{n \geq 0} a_n x^n$ is a root, there is an algorithm for producing an automaton that computes $a_n \bmod p$.
Conversely, given such an automaton, there is an algorithm for computing such a polynomial; an example showing the details of this computation appears in \cite[Example~4.2]{Rowland--Yassawi}.

One can also define $p$-automaticity for multidimensional sequences $(a_{n_1, \ldots, n_k})_{n_1, \ldots, n_k \geq 0}$, by feeding, in parallel,  the base-$p$ representations of $n_1, \ldots, n_k$. 
For an introduction, see \cite[Chapter~14]{ash}.
A multidimensional version of Theorem~\ref{Christol} is proved in \cite{Salon}.
 The following generalization of Theorem~\ref{Christol}  was first proved by Christol~\cite{Christol}, for $k=1$, and then later by   Denef and Lipshitz~\cite{Denef--Lipshitz}.
The ring of $p$-adic integers is denoted by $\Z_p$.

\begin{theorem}\label{Denef--Lipshitz_1}
Let  $(a_{n_1, \ldots, n_k})_{n_1, \ldots,n_k \geq 0}$ be a $k$-dimensional sequence of $p$-adic integers such that
\[
	\sum_{n_1, \ldots, n_k \geq 0} a_{n_1, \ldots, n_k} x_1^{n_1}\cdots x_k^{n_k}
\]
is algebraic over $\Z_p(x_1, \ldots, x_k)$, and let $\alpha \geq 1$.
Then $(a_{n_1, \ldots, n_k} \bmod p^\alpha)_{n_1, \ldots, n_k \geq 0}$ is $p$-automatic. 
\end{theorem}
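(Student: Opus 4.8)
The plan is to prove the equivalent statement that the $p$-kernel of the reduced sequence $(a_{n_1, \ldots, n_k}\bmod p^\alpha)$ is finite; since the alphabet $\Z/p^\alpha\Z$ is finite, this is equivalent to $p$-automaticity by the multidimensional kernel characterization (\cite[Chapter~14]{ash}, \cite{Salon}). Write $\vec x = (x_1, \ldots, x_k)$, use multi-index notation, and let $f = \sum_{\vec n}a_{\vec n}\vec x^{\vec n} \in \Z_p[[\vec x]]$. For $\vec j \in \{0, 1, \ldots, p-1\}^k$ define the section operator $\Lambda_{\vec j}$ by $\Lambda_{\vec j}\bigl(\sum_{\vec n}b_{\vec n}\vec x^{\vec n}\bigr) = \sum_{\vec n}b_{p\vec n + \vec j}\,\vec x^{\vec n}$. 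The members of the $p$-kernel of $(a_{\vec n})$ are precisely the coefficient sequences of the iterated sections $\Lambda_{\vec j_1}\cdots\Lambda_{\vec j_e}(f)$, and since $\Lambda_{\vec j}$ commutes with reduction modulo $p^\alpha$ it suffices to show that only finitely many distinct power series appear among the $\Lambda_{\vec j_1}\cdots\Lambda_{\vec j_e}(f)\bmod p^\alpha$.

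Manipulations with $\Lambda_{\vec j}$ are organized by the substitution homomorphism $E\colon g(\vec x)\mapsto g(x_1^p, \ldots, x_k^p)$, which over any commutative ring is a ring endomorphism satisfying $f = \sum_{\vec j}\vec x^{\vec j}E(\Lambda_{\vec j}f)$ and $\Lambda_{\vec j}\bigl(w\cdot E(v)\bigr) = \Lambda_{\vec j}(w)\cdot v$. Over $\F_p$ one has $E(g) = g^p$, which is what powers Christol's proof; over $\Z_p$ and $\Z/p^\alpha\Z$ this coincidence is lost, and that is the essential difficulty. To recover a usable framework I would first lift the problem to $\Z_p$: since $\Z_p$ is a domain, $\Z_p(\vec x)$ is a genuine field over which $f$ is algebraic of some degree $d$. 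Clearing denominators in the expression of $f^d$ as a $\Z_p(\vec x)$-combination of $1, f, \ldots, f^{d-1}$ produces a nonzero $u \in \Z_p[\vec x]$ for which the localized module $M_0[1/u]$ is a ring, where $M_0$ is the $\Z_p[\vec x]$-module generated by $1, f, \ldots, f^{d-1}$.

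The heart of the matter --- this is exactly Christol's theorem for $k=1$ and the Denef--Lipshitz extension in general --- is to produce, possibly after enlarging $u$, a finitely generated $\Z_p[\vec x][1/u]$-module $M$ with $f \in M$ that is stable under every $\Lambda_{\vec j}$; concretely one shows that the integral closure of $\Z_p[\vec x][1/u]$ inside $\Z_p(\vec x)(f)$ works, the passage beyond $M_0[1/u]$ being forced by the fact that, without $p$-th roots of unity available, $E$ cannot extract the graded components of $f$ algebraically. One then uses the two identities above to rewrite each $\Lambda_{\vec j}\bigl(P(\vec x)\,m\bigr)$, for $m$ a module generator, as an element of $M$. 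The quantitative point I would extract from this is that, because $\Lambda_{\vec j}$ contracts exponents by a factor of $p$, the $\vec x$-degrees of the coefficients that arise and the powers of $u$ in denominators obey a recursion of shape $D \mapsto \lceil (D + C)/p\rceil$ and hence stay uniformly bounded along every iterated section of $f$.

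Finally I would reduce modulo $p^\alpha$. By the previous step there are fixed power series $m_1, \ldots, m_s \in \Z_p[[\vec x]]$, a fixed $u$, and a bound $D$ such that every iterated section of $f$ can be written $u(\vec x)^{-D}\sum_{i=1}^s P_i(\vec x)\,m_i$ with each $P_i$ of $\vec x$-degree at most $D$. Reducing modulo $p^\alpha$, the tuple $(P_i\bmod p^\alpha)_{1\le i\le s}$ ranges over the finite set of $s$-tuples of polynomials in $(\Z/p^\alpha\Z)[\vec x]$ of degree at most $D$, so only finitely many power series $\Lambda_{\vec j_1}\cdots\Lambda_{\vec j_e}(f)\bmod p^\alpha$ occur; hence the $p$-kernel of $(a_{\vec n}\bmod p^\alpha)$ is finite and the sequence is $p$-automatic. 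The main obstacle is the middle step: building the finitely generated $\Lambda$-stable module over $\Z_p[\vec x][1/u]$ in the absence of Frobenius, together with the bookkeeping on degrees and denominators that makes the final reduction modulo $p^\alpha$ collapse everything to a finite set.
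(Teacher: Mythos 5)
Your outline is a direct attack on Theorem~\ref{Denef--Lipshitz_1}: build a finitely generated $\Lambda$-stable $\Z_p[\vec x][1/u]$-module $M \ni f$ with uniform degree and denominator bounds, then reduce mod $p^\alpha$. This is not either of the two Denef--Lipshitz arguments that the paper extracts, and the step you flag as ``the main obstacle'' is in fact a genuine gap, not merely a technical one. The module you propose --- the integral closure of $\Z_p[\vec x][1/u]$ inside $\Z_p(\vec x)(f)$ --- is not stable under the Cartier operators, because in characteristic zero $\Lambda_{\vec j}(f)$ need not lie in the field $\Z_p(\vec x)(f)$ at all. Over $\F_p$ stability works because $E(g) = g^p$, so the sections stay inside a fixed function field; over $\Z_p$ that coincidence fails and the sections of $f$ escape to strictly larger extensions as you iterate. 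Concretely, take $k=1$ and $f = \sqrt{1-x}$: one checks $\Lambda_0(f)(x^2)^2 = \tfrac{1}{2}(1 + \sqrt{1-x^2})$, so $\Lambda_0(f)$ has degree $4$ over $\Q(x)$ while $f$ has degree $2$; the field degree doubles with each section and no finitely generated module over any fixed localization of $\Z_p[\vec x]$ can absorb all iterated sections. Since the module does not exist over $\Z_p$, the uniform degree/denominator bound in your fourth step cannot be established at the integral level, and the final reduction has nothing to act on.

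Both arguments behind the theorem avoid this obstruction by never asking for a $\Lambda$-stable structure over $\Z_p$ itself. The first (the proof underlying Theorem~\ref{diagonal closure}) realizes $f$ as a diagonal of a rational $R/Q$ with $Q(0,\ldots,0) \nequiv 0 \pmod p$ and reduces modulo $p^\alpha$ \emph{before} any module bookkeeping: the congruence $Q(x)^{p^\alpha} \equiv Q(x^p)^{p^{\alpha-1}} \pmod{p^\alpha}$ (Proposition~\ref{Cartier extraction}) is the mod-$p^\alpha$ surrogate for the missing Frobenius compatibility, and it is exactly this congruence --- which does not lift to an identity over $\Z_p$ --- that keeps the iterated sections in the finite set $\{s(\vec x)/Q^{p^{\alpha-1}} : \deg s \leq m\}$ over $\Z/p^\alpha\Z$. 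The second (Section~\ref{hard_algorithm}) never bounds anything over $\Z_p$ either: it peels off one base-$p$ digit at a time via Proposition~\ref{projections_are_algebraic}, proves each digit series is algebraic over $\F_p(\vec x)$ using Ore's lemma and Hensel lifting, and then invokes Christol's theorem over $\F_p$ where Frobenius is available. To salvage your outline you would have to move the reduction mod $p^\alpha$ to the beginning, working with $\mu_{\vec d}(s) = \Lambda_{\vec d}(s \cdot Q^{p^\alpha - p^{\alpha - 1}}) \bmod p^\alpha$ over $(\Z/p^\alpha\Z)[\vec x]$ --- which is precisely the paper's Theorem~\ref{diagonal closure} --- and even then you only cover the case where Furstenberg's hypothesis $\tfrac{\partial P}{\partial y}(0,\ldots,0) \nequiv 0 \bmod p$ can be arranged; the fully general case requires the digit-by-digit machinery.
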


For $k = 1$, it follows from Theorem~\ref{Denef--Lipshitz_1} that  if $(a_n)_{n \geq 0}$ is algebraic, then, given a set $R$ of residue classes modulo $p^\alpha$, the set of words
\[
	\{\text{base-$p$ representation of $n$} \, : \, \text{$a_n \equiv r \mod p^\alpha$ for some $r \in R$}\}
\]
is a regular language.
Given an automaton which computes $a_n \bmod p^\alpha$, an automaton accepting this language can be obtained by setting all states corresponding to an output $r \in R$ as accepting states and all others as rejecting states.
An analogous statement holds for general $k \geq 1$.

Denef and Lipshitz gave two proofs of Theorem~\ref{Denef--Lipshitz_1}.
In this paper we emphasize the extent to which these proofs are constructive.
From each proof we extract an algorithm which, given an appropriate sequence and a prime power $p^\alpha$, outputs a finite automaton that computes terms of the sequence modulo $p^{\alpha}$.

 The first algorithm, which we describe in Section~\ref{easy_algorithm}, is simpler to implement, works for most of the algebraic sequences we considered, and  indeed runs quickly for sequences such as the Catalan and Motzkin numbers modulo small prime powers.
This algorithm in fact applies more generally to diagonals of certain rational power series.
For example, the sequence of Ap\'ery numbers, which has received much attention, is the diagonal of a rational power series but is not algebraic.

  However, for algebraic sequences this algorithm puts requirements on the coefficients of the polynomial satisfied by the generating function.
The second algorithm, described in Section~\ref{hard_algorithm}, applies to all algebraic sequences but in practice is much slower.

In general, neither algorithm produces the automaton with fewest states for a given sequence modulo $p^\alpha$.
However, it is natural to ask, for each of these algorithms, how  the number of states changes as $p$ and $\alpha$ vary. Apart from Remarks~\ref{first estimate} and \ref{second estimate}, we do not address this here, but  Adamczewski and Bell~\cite{Adamczewski--Bell} answered a related question for $\alpha=1$.  In that case, $\sum_{n \geq 0} (a_n \bmod p) x^n$ is algebraic over $\F_p(x)$, and they showed that polynomials for which it is a root have comparable degrees as $p$ varies.
As a consequence, for general $\alpha \geq 1$ they obtain bounds on the degrees of a polynomial for an algebraic sequence modulo $p^\alpha$ \cite[Remark~1.2]{Adamczewski--Bell}.

In Section~\ref{Congruences} we compute, purely mechanically, finite automata for various sequences modulo $p^\alpha$, using the method of Section~\ref{easy_algorithm}.
Any number of congruences can be read off from these automata.
In this way we provide routine proofs of many known results, establish a large number of new congruences for combinatorial sequences, and also prove some conjectures that have not succumbed to other approaches.

Finally, in Section~\ref{Lucas} we consider multidimensional diagonals of rational expressions.
We give general conditions for a Lucas product to exist for the coefficient sequence modulo $p$, and we give a new generalization to prime powers of Lucas' theorem for $\binom{n}{m}$.

We mention that after the present paper appeared in preprint form, Zeilberger and the first author~\cite{Rowland--Zeilberger} gave a method for computing an automaton for $a_n \bmod p^\alpha$, where $a_n$ is the constant term of $P(x)^n Q(x)$ for some Laurent polynomials $P(x), Q(x)$.
The algorithm is similar in many ways to the algorithm we describe in Section~\ref{easy_algorithm} and applies to the same combinatorial sequences of interest.

\subsection{Finite automata and the Cartier operator}\label{Notation}

We now give a formal definition of a finite automaton with output.

\begin{definition}\label{dfao}
A {\em $p$-deterministic finite automaton with output} ($p$-DFAO) is a 6-tuple  $(\mathcal S, \Sigma_{p},\delta, s_1, \mathcal A, \omega)$, where $\mathcal S$ is a finite set of ``states'', $s_1 \in \mathcal S$ is the {\em initial state}, $\Sigma_p=\{0,1, \ldots, p-1\}$,
$\mathcal A$ is a finite alphabet,
$\omega:\mathcal S\rightarrow \mathcal A$ is the \emph{output function}, and $\delta:\mathcal S\times \Sigma_{p}\rightarrow \mathcal S$ is the {\em transition function}.
\end{definition}

The function $\delta$ extends in a natural way to the domain $\mathcal S \times \Sigma_p^+$, where $\Sigma_p^+$ is the set of nonempty words on the alphabet $\Sigma_p$.
Namely, define $\delta(s, n_l \cdots n_1 n_0) \colonequal \delta(\delta(s, n_0), n_l \cdots n_1)$ recursively.
This allows us to feed the standard base-$p$ representation $n_l \cdots n_1 n_0$ of an integer $n$ into an automaton.
Our convention is that we read the base-$p$ representation beginning with the \emph{least significant digit}.
(Recall that the standard base-$p$ representation of $0$ is the empty word.)

\begin{definition}
A sequence $(a_n)_{n \geq 0}$ of elements in $\mathcal A$ is {\em $p$-automatic} if there is a $p$-DFAO  
$(\mathcal S,\Sigma_{p},\delta, s_1, \mathcal A, \omega)$ such that $a_{n} = \omega(\delta (s_1, n_l \cdots n_1 n_0))$ for all $n \geq 0$, where $n_l \cdots n_1 n_0$ is the standard base-$p$ representation of $n$.
\end{definition}

In this article our alphabet is $\mathcal A = \Z/(p^\alpha \Z)$, where $p$ is a prime and $\alpha \geq 1$.

\begin{example}
Consider the following automaton for $p = 2$ and $\alpha = 2$.
Each of the six states is represented by a vertex, labeled with its output under $\omega$.
Edges between vertices illustrate $\delta$.
The unlabeled edge points to the initial state.
\begin{center}
	\scalebox{.8}{\vcentergraphics{Catalan4}}
\end{center}
The $2$-automatic sequence produced by this automaton is
\[
	(a_n)_{n \geq 0} = 0, 1, 2, 1, 2, 2, 0, 1, 2, 2, 0, 2, 0, 0, 0, 1, \dots.
\]
We will see in Section~\ref{Catalan numbers} that for $n \geq 1$ this is the sequence of Catalan numbers modulo $4$.
\end{example}

\begin{definition}
The {\em $p$-kernel} of a sequence $(a_{n})_{n\geq 0}$ is the collection of sequences
\[\ker_p((a_n)_{n \geq 0}) \colonequal \{(a_{p^{e}n+j})_{n\geq 0} \, : \, e\geq 0, \, 0\leq j\leq p^{e}-1\}.\]
\end{definition}

If $\mathcal{A}$ is a ring, we let $\mathcal{A}[x_1, \dots, x_k]$ and $\mathcal{A}\llbracket x_1, \dots, x_k \rrbracket$ denote the sets of polynomials and formal power series, respectively, in variables $x_1, \dots, x_k$ with coefficients in $\mathcal{A}$.
The power series $f(x_1, \ldots,x_k)\in \mathcal{A}\llbracket x_1, \ldots, x_k \rrbracket$ is {\em algebraic}
if there exists a nonzero polynomial $P(x_1, \ldots, x_k, y) \in \mathcal{A}[x_1, \ldots, x_k, y]$ such that $P(x_1, \ldots, x_k, f(x_1, \ldots, x_k)) = 0$.

By identifying a sequence with its generating function, we extend the notion of the $p$-kernel to formal power series. Namely, if $f(x)=\sum_{n \geq 0} a_n x^n$, then
\[
	\ker_p(f(x)) \colonequal \left\{\sum_{n \geq 0} a_{p^{e}n+j} x^n \, : \, e\geq 0, \, 0\leq j\leq p^{e}-1\right\}.
\]
The set $\ker_p(f) \bmod p^\alpha$ is the set $\ker_p(f)$ with each element projected modulo $p^\alpha$.

The Cartier operator provides a standard way to access elements of the $p$-kernel.

\begin{definition}
Fix $p$, and let $(d_1, \dots, d_k) \in \{0, 1, \dots, p-1\}^k$.
The \emph{Cartier operator} $\Lambda_{d_1, \dots, d_k}$ is the map on $\mathcal{A}\llbracket x_1, \dots, x_k \rrbracket$ defined by
\[
	\Lambda_{d_1, \dots, d_k} \left(\sum_{n_1, \dots, n_k \geq 0} a_{n_1, \dots, n_k} x_1^{n_1} \cdots x_k^{n_k}\right)
	\colonequal \sum_{n_1, \dots, n_k \geq 0} a_{p n_1 + d_1, \dots, p n_k + d_k} x_1^{n_1} \cdots x_k^{n_k}.
\]
\end{definition}

Equivalently,
\[
	\Lambda_{d_1, \dots, d_k} \left(\sum_{n_1, \dots, n_k \geq 0} a_{n_1, \dots, n_k} x_1^{n_1} \cdots x_k^{n_k}\right)
	= \sum_{\substack{n_1 \equiv d_1 \mod p \vspace{-.2cm} \\ \vdots \\ n_k \equiv d_k \mod p}} a_{n_1, \dots, n_k} x_1^{\lfloor n_1/p \rfloor} \cdots x_k^{\lfloor n_k/p \rfloor}.
\]
Note that, in one variable, $f(x) = \sum_{d=0}^{p-1} x^{d} \lam_d (f) (x^p)$, and moreover if $f(x) \in \F_p\llbracket x \rrbracket$, then  $f(x) = \sum_{d=0}^{p-1} x^{d} (\lam_d (f(x)))^p$. Note also  that 
\[
	\lam_{d_l} \circ \cdots \circ \lam_{d_1} \circ \lam_{d_0} (f)
	= \sum_{n \geq 0} a_{p^{l+1} n + (p^l d_l + \cdots + p^1 d_1 + p^0 d_0)} x^n,
\]
so that
\[
	\ker_p(f) = \{f\} \cup \{\lam_{d_l} \circ \dots \circ \lam_{d_1} \circ \lam_{d_0} (f) \, : \,  l\geq 0, \, 0\leq d_j \leq p-1 \mbox{ for each } j\}.
\]

The following classical result can be found in \cite[Proposition~V.3.3]{ei} and \cite[Theorem~6.6.2]{ash}.
Our methods use this theorem and its proof heavily, so we include a proof.

\begin{theorem}\label{Eilenberg}
Let 
 $(a_n)_{n \geq 0}$ be a sequence of elements from  a finite alphabet $\mathcal A$. Then
the $p$-kernel of $(a_{n})_{n\geq 0}$ is finite if and only if 
$(a_{n})_{n \geq 0}$ is $p$-automatic. 
\end{theorem}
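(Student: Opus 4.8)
The plan is to prove both directions by relating the states of a minimal automaton to elements of the $p$-kernel.

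\textbf{($p$-automatic $\Rightarrow$ finite $p$-kernel.)} First I would suppose $(a_n)_{n \geq 0}$ is computed by a $p$-DFAO $(\mathcal S, \Sigma_p, \delta, s_1, \mathcal A, \omega)$. The key observation is that for each state $s \in \mathcal S$ we can associate the sequence $b^{(s)}_n \colonequal \omega(\delta(s, n_l \cdots n_1 n_0))$, where $n_l \cdots n_1 n_0$ is the standard base-$p$ representation of $n$; note $b^{(s_1)} = (a_n)_{n \geq 0}$. Since our convention reads digits starting from the least significant, feeding the digit $j$ first and then the remaining digits of $m$ shows that the sequence $(a_{p^e n + j})_{n \geq 0}$, for $0 \leq j \leq p^e - 1$ with base-$p$ representation $j_{e-1} \cdots j_0$ (padded with leading zeros), equals $b^{(s')}$ where $s' = \delta(s_1, j_{e-1} \cdots j_0)$. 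Hence every element of $\ker_p((a_n)_{n \geq 0})$ is of the form $b^{(s)}$ for some $s \in \mathcal S$, so $|\ker_p((a_n)_{n \geq 0})| \leq |\mathcal S| < \infty$. One small point to be careful about is the handling of leading zeros and the empty word (the representation of $0$): $\delta(s, 0 \cdots 0 w) = \delta(s, w)$ does not hold for an arbitrary automaton, so I would either note this only affects finitely many kernel elements, or invoke the standard fact that one may assume the automaton ignores leading zeros.

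\textbf{(Finite $p$-kernel $\Rightarrow$ $p$-automatic.)} For the converse, suppose $\ker_p((a_n)_{n \geq 0})$ is a finite set. I would build an automaton whose state set $\mathcal S$ is exactly $\ker_p((a_n)_{n \geq 0})$, with initial state $s_1 = (a_n)_{n \geq 0}$. For a state $s = (c_n)_{n \geq 0}$ and a digit $d \in \Sigma_p$, define $\delta(s, d) \colonequal (c_{pn+d})_{n \geq 0}$; this again lies in the $p$-kernel because a subsequence of a subsequence of the form $(a_{p^e n + j})$ is again of that form (using $p^{e+1} n + (pj + d)$). Define the output $\omega(s) = \omega((c_n)_{n \geq 0}) \colonequal c_0$. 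A straightforward induction on the length of the base-$p$ representation $n_l \cdots n_1 n_0$ of $n$ then shows $\delta(s_1, n_l \cdots n_1 n_0) = (a_{p^{l+1} m + n})_{m \geq 0}$ when we pad appropriately, and evaluating the output gives the $0$-th term, namely $a_n$. This uses exactly the identity displayed just before the theorem statement, $\lam_{d_l} \circ \cdots \circ \lam_{d_0}(f) = \sum_{n \geq 0} a_{p^{l+1}n + (p^l d_l + \cdots + d_0)} x^n$, translated from generating functions back to sequences. The alphabet $\mathcal A$ is finite by hypothesis on $(a_n)_{n \geq 0}$, and $\mathcal S$ is finite by the finiteness of the $p$-kernel, so this is a genuine $p$-DFAO computing $(a_n)_{n \geq 0}$.

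\textbf{Main obstacle.} The mathematical content is light; the step requiring the most care is the bookkeeping around base-$p$ representations with leading zeros, since the recursive extension of $\delta$ to $\Sigma_p^+$ and the convention that $0$ has the empty representation interact subtly with the kernel indices $p^e n + j$ for $j < p^e$. I would state the induction precisely in terms of the word $n_{e-1} \cdots n_0$ of length exactly $e$ (allowing leading zeros) representing $j$, so that $\delta(s_1, n_{e-1} \cdots n_0)$ is unambiguously the kernel element $(a_{p^e n + j})_{n \geq 0}$, and only at the end reconcile this with the standard (leading-zero-free) representation. Everything else is a direct unwinding of the definitions.
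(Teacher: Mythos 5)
Your proof is correct and follows the same strategy as the paper's: the converse direction maps each kernel element $(a_{p^e n + j})$ to the state $\delta(s_1, d_{e-1}\cdots d_0)$, and the forward direction takes the kernel itself as the state set with $\delta(s,d) = \Lambda_d(s)$ and $\omega(s) = s_0$. Your explicit flag of the leading-zero subtlety is a careful touch that the paper leaves implicit, but the substance of the argument is identical.
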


\begin{proof}
Note that we need not assume $p$ is prime, so the theorem holds more generally.

Suppose the $p$-kernel of $(a_{n})_{n\geq 0}$ is finite.
Build an automaton as follows.
Let $\mathcal S$ be the $p$-kernel of $(a_{n})_{n\geq 0}$, and designate the sequence $(a_{n})_{n\geq 0}$ itself to be the initial state $s_1 \in \mathcal S$.
By identifying a sequence with its generating function, we can define $\Lambda_d$ on $\mathcal S$.
For all $s \in \mathcal S$ and $d \in \Sigma_p$, let $\delta(s, d) = \Lambda_d(s)$.
Finally, for each $s \in \mathcal S$ let $\omega(s)$ be the first term of $s$.
Then we claim the automaton $(\mathcal S, \Sigma_{p},\delta, s_1, \mathcal A, \omega)$ outputs $a_n$ when fed the base-$p$ digits of $n$.
Clearly this true for $n = 0$, since $\omega(s_1) = a_0$.
For $n \geq 1$ we have
\begin{align*}
	\omega(\delta(s_1, n_l \cdots n_1 n_0))
	&= \omega(\delta( \cdots \delta(\delta(s_1, n_0), n_1) \cdots, n_l)) \\
	&= \omega(\Lambda_{n_l} \circ \dots \circ \Lambda_{n_1} \circ \Lambda_{n_0}(s_1)) \\
	&= a_n.
\end{align*}

Conversely, let $(\mathcal S, \Sigma_{p},\delta, s_1, \mathcal A, \omega)$ be an automaton that outputs $a_n$ when fed the base-$p$ digits of $n$.
For a sequence $(a_{p^{e}n+j})_{n\geq 0}$ in the $p$-kernel of $(a_n)_{n \geq 0}$, write $j = d_{e-1} \cdots d_1 d_0$ in base $p$, and let $s_{e,j} = \delta(s_1, d_{e-1} \cdots d_1 d_0) \in \mathcal S$.
Then the automaton $(\mathcal S, \Sigma_{p},\delta, s_{e,j}, \mathcal A, \omega)$ outputs $a_{p^{e}n+j}$ when fed the base-$p$ digits of $n$.
This gives an injection from the $p$-kernel of $(a_n)_{n \geq 0}$ to $\mathcal S$, so the finiteness of the $p$-kernel now follows from the finiteness of $\mathcal S$.
\end{proof}

From the proof of Theorem~\ref{Eilenberg} it follows that if $(a_{n})_{n \geq 0}$ is $p$-automatic, then relationships between the elements of its $p$-kernel can be explicitly read off from any automaton that computes $(a_{n})_{n \geq 0}$ (reading least significant digit first).
Coupled with Theorem~\ref{Christol}, this implies that, given a polynomial $P(x,y) \in \F_p[x,y]$ such that $P(x,\sum_{n \geq 0}a_{n}x^{n})=0$, one can compute the $p$-kernel of $(a_n)_{n\geq 0}$.

The following proposition is highly useful.
It shows that we can pull certain power series out of the Cartier operator when working modulo $p^\alpha$.

\begin{proposition}\label{Cartier extraction}
Let $x = (x_1, \dots, x_k)$.
Let $f(x), g(x) \in \Z_p\llbracket x \rrbracket$ be formal power series in $k$ variables, and let $r \in \{0, \dots, p-1\}^k$.
Then
\[
	\Lambda_r(g(x) \cdot f(x)^{p^\alpha}) \equiv \Lambda_r(g(x)) \cdot f(x)^{p^{\alpha - 1}} \mod p^\alpha.
\]
\end{proposition}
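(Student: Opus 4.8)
The plan is to isolate two elementary facts about $p$-th powers in $\Z_p\llbracket x\rrbracket$ and combine them. Write $\phi$ for the ring endomorphism of $\Z_p\llbracket x\rrbracket$ that substitutes $x_i \mapsto x_i^p$ for each $i$. The first fact is the Frobenius congruence $f(x)^p \equiv \phi(f)(x) \pmod p$, valid for every $f \in \Z_p\llbracket x\rrbracket$: reducing coefficients modulo $p$ and working degree by degree, this is just additivity of Frobenius in characteristic $p$ together with $a^p = a$ in $\F_p$. The second fact is the lemma that if $u, v \in \Z_p\llbracket x\rrbracket$ and $u \equiv v \pmod{p^j}$ with $j \geq 1$, then $u^p \equiv v^p \pmod{p^{j+1}}$; this comes from writing $u = v + p^j w$ and expanding $u^p$ by the binomial theorem, since every term beyond $v^p$ is divisible by $p^{j+1}$ (the $i$-th term, $1 \le i \le p$, carries a factor $\binom{p}{i} p^{ij}$, and $p \mid \binom{p}{i}$ for $0 < i < p$ while $p^{pj}$ is divisible by $p^{j+1}$ once $j \geq 1$).

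Iterating the second fact $\alpha - 1$ times starting from the first gives $(f^p)^{p^{\alpha-1}} \equiv \phi(f)^{p^{\alpha-1}} \pmod{p^\alpha}$, that is,
\[
	f(x)^{p^\alpha} \equiv \phi\bigl(f(x)^{p^{\alpha-1}}\bigr) \pmod{p^\alpha},
\]
using that $\phi$ is a ring homomorphism, so $\phi(f)^{p^{\alpha-1}} = \phi(f^{p^{\alpha-1}})$. The remaining ingredient is the identity
\[
	\Lambda_r\bigl(g(x) \cdot \phi(H)(x)\bigr) = \Lambda_r(g(x)) \cdot H(x) \qquad \text{for all } g, H \in \Z_p\llbracket x\rrbracket,
\]
which is the precise sense in which a power series in $x_1^p, \dots, x_k^p$ can be pulled out of the Cartier operator. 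This is a one-line coefficient computation: in $g \cdot \phi(H)$ a monomial has exponent vector of the form $m + pj$, with $b_m$ a coefficient of $g$ and $c_j$ a coefficient of $H$; its residue modulo $p$ (coordinatewise) depends only on $m$, and $\lfloor (m + pj)/p \rfloor = \lfloor m/p \rfloor + j$, so selecting the exponents $\equiv r$ and dividing by $p$ recovers exactly $\Lambda_r(g) \cdot H$ from the equivalent formula for $\Lambda_r$.

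To finish, note that $\Lambda_r$ merely selects and relabels coefficients, so it maps $p^\alpha \Z_p\llbracket x\rrbracket$ into itself and therefore respects congruences modulo $p^\alpha$. Multiplying the displayed congruence for $f(x)^{p^\alpha}$ by $g$ and applying $\Lambda_r$ then gives
\[
	\Lambda_r\bigl(g(x) \cdot f(x)^{p^\alpha}\bigr) \equiv \Lambda_r\bigl(g(x) \cdot \phi(f^{p^{\alpha-1}})(x)\bigr) = \Lambda_r(g(x)) \cdot f(x)^{p^{\alpha-1}} \pmod{p^\alpha},
\]
which is the assertion. I do not expect a genuine obstacle here; the only point requiring care is the hypothesis $j \geq 1$ in the $p$-th power lemma, which fails for $j = 0$. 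This is precisely why the exponent drops only from $p^\alpha$ to $p^{\alpha-1}$, and why the iteration must be seeded with the honest mod-$p$ Frobenius congruence rather than bootstrapped from nothing.
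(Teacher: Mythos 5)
Your proof is correct and follows essentially the same route as the paper's: reduce $f(x)^{p^\alpha}$ to $f(x^p)^{p^{\alpha-1}}$ modulo $p^\alpha$ via the Frobenius congruence and the lifting lemma (which you iterate explicitly and carefully state in the power series ring, rather than quoting the corresponding one-line fact about $\Z_p$), and then use the identity that a series in $x^p$ factors out of $\Lambda_r$. The only differences are expository, not mathematical.
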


\begin{proof}
If $a, b \in \Z_p$ such that $a \equiv b \mod p$, then $a^{p^{\alpha-1}} \equiv b^{p^{\alpha-1}} \mod p^\alpha$.
Since $f(x)^p \equiv f(x^p) \mod p$, it follows that $f(x)^{p^\alpha} \equiv f(x^p)^{p^{\alpha-1}} \mod p^\alpha$.
One verifies that $\Lambda_r(g(x) \cdot h(x^p)) = \Lambda_r(g(x)) \cdot h(x)$.
Therefore
\begin{align*}
	\Lambda_r\left(g(x) \cdot f(x)^{p^\alpha}\right)
	&\equiv \Lambda_r\left(g(x) \cdot f(x^p)^{p^{\alpha-1}}\right) \mod p^\alpha \\
	&= \Lambda_r(g(x)) \cdot f(x)^{p^{\alpha-1}}. \qedhere
\end{align*}
\end{proof}

\section{Automata for diagonals of rational power series}\label{easy_algorithm}

In this section we give an algorithm for computing automata for sequences, modulo $p^\alpha$, that are diagonals of certain rational power series.  This includes many algebraic sequences.
The approach is based on a proof of Theorem~\ref{Denef--Lipshitz_1} by Denef and Lipshitz~\cite[Remark~6.6]{Denef--Lipshitz}.
Part of their argument \cite[Theorem~6.2]{Denef--Lipshitz} is nonconstructive.
Therefore, applying our algorithm to an algebraic power series requires a polynomial of a certain form.
However, we were able to apply the algorithm to nearly all combinatorial sequences we considered.

The \emph{diagonal} of a formal power series is
\[
	\mathcal{D} \left(\sum_{n_1, \dots, n_k \geq 0} a_{n_1, \dots, n_k} x_1^{n_1} \cdots x_k^{n_k}\right)
	\colonequal \sum_{n \geq 0} a_{n, \dots, n} x^n.
\]

\begin{theorem}\label{diagonal closure}
Let $R(x_1,\ldots, x_k)$ and $Q(x_1, \ldots,x_k)$ be polynomials in $\Z_p[x_1, \ldots, x_k]$ such that $Q(0,\ldots,0) \nequiv 0 \mod p$, and let $\alpha\geq 1$.
Then the coefficient sequence of
\[
	\mathcal{D} \left(\frac{R(x_1, \ldots, x_k)}{Q(x_1, \ldots,x_k)}\right) \bmod p^\alpha
\]
is $p$-automatic.
\end{theorem}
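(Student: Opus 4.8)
The plan is to show that $\ker_p\!\big(\mathcal{D}(R/Q) \bmod p^\alpha\big)$ is finite and then invoke Theorem~\ref{Eilenberg}. Write $F = R/Q \in \Z_p\llbracket x_1, \dots, x_k\rrbracket$; this is a well-defined power series because $Q(0, \dots, 0) \nequiv 0 \bmod p$ makes $Q$ a unit. The first ingredient is that the one-variable Cartier operator commutes with the diagonal in the obvious way: directly from the definitions, $\Lambda_d(\mathcal{D}(F)) = \mathcal{D}(\Lambda_{d, \dots, d}(F))$ for each $d \in \{0, \dots, p-1\}$, where $\Lambda_{d, \dots, d}$ denotes the $k$-variable Cartier operator with all indices equal to $d$. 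Iterating, every element of $\ker_p(\mathcal{D}(F))$ other than $\mathcal{D}(F)$ has the form $\mathcal{D}\big(\Lambda_{d_l, \dots, d_l} \circ \cdots \circ \Lambda_{d_0, \dots, d_0}(F)\big)$, and since $\mathcal{D}$ merely selects coefficients it respects congruences modulo $p^\alpha$. So it suffices to show that the set of $k$-variable power series $\Lambda_{d_l, \dots, d_l} \circ \cdots \circ \Lambda_{d_0, \dots, d_0}(F) \bmod p^\alpha$ is finite.

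To control these iterates I would clear the denominator into a $p^\alpha$-th power. Since $1/Q \in \Z_p\llbracket x_1, \dots, x_k\rrbracket$, we may write
\[
	F = R\,Q^{p^\alpha - 1} \cdot \Big(\frac{1}{Q}\Big)^{\!p^\alpha} = \frac{P_0}{Q^{p^\alpha}}, \qquad P_0 \colonequal R\,Q^{p^\alpha - 1} \in \Z_p[x_1, \dots, x_k].
\]
Applying Proposition~\ref{Cartier extraction} with $g = P_0$ and $f = 1/Q$ gives $\Lambda_r(F) \equiv \Lambda_r(P_0) \cdot (1/Q)^{p^{\alpha-1}} \bmod p^\alpha$, and since $(1/Q)^{p^{\alpha-1}} = Q^{p^\alpha - p^{\alpha-1}} \cdot (1/Q)^{p^\alpha}$, this is $\equiv P_1 / Q^{p^\alpha} \bmod p^\alpha$ with $P_1 \colonequal \Lambda_r(P_0)\,Q^{p^\alpha - p^{\alpha-1}}$ again a polynomial. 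By induction on the length of the digit string, each iterate is congruent modulo $p^\alpha$ to $P_w / Q^{p^\alpha}$ for a polynomial $P_w$, and since $Q$ is a unit, distinct iterates modulo $p^\alpha$ come from distinct reductions $P_w \bmod p^\alpha$.

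It remains to bound $\deg P_w$. The Cartier operator at most divides total degree by $p$, i.e.\ $\deg \Lambda_r(P) \leq \lfloor \deg P / p \rfloor$, so the recursion $P \mapsto \Lambda_r(P)\,Q^{p^\alpha - p^{\alpha-1}}$ sends a polynomial of degree $D$ to one of degree at most $D/p + (p^\alpha - p^{\alpha-1})\deg Q$. This affine map is a contraction with fixed point $p^\alpha \deg Q$, so all iterates have degree at most $D^* \colonequal \max\{\deg P_0,\; p^\alpha \deg Q\} < \infty$. There are only finitely many polynomials in $k$ variables of degree at most $D^*$ with coefficients in $\Z/p^\alpha\Z$, so the iterates $\Lambda_{d_l, \dots, d_l} \circ \cdots \circ \Lambda_{d_0, \dots, d_0}(F) \bmod p^\alpha$ take only finitely many values; applying $\mathcal{D}$ and then Theorem~\ref{Eilenberg} completes the proof. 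The only part requiring care is this last bookkeeping — always re-expressing an iterate over the fixed denominator $Q^{p^\alpha}$ so that a single affine degree recursion applies — but there is no genuine obstacle, since Proposition~\ref{Cartier extraction} does the real work.
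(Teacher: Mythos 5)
Your proof is correct and follows essentially the same strategy as the paper: commute the univariate Cartier operator with the diagonal, use Proposition~\ref{Cartier extraction} to reduce each step to a polynomial map, bound degrees by an affine contraction, and invoke Theorem~\ref{Eilenberg}. The one genuine difference is the choice of transition map. The paper clears the denominator only to $Q^{p^{\alpha-1}}$ and uses $s \mapsto \Lambda_{d,\ldots,d}\bigl(s \cdot Q^{p^\alpha - p^{\alpha-1}}\bigr)$ (multiply, then apply Cartier), whose degree fixed point is $p^{\alpha-1}\deg Q$. You clear all the way to $Q^{p^\alpha}$ and use $P \mapsto \Lambda_{d,\ldots,d}(P)\cdot Q^{p^\alpha - p^{\alpha-1}}$ (Cartier first, then multiply), whose degree fixed point is $p^\alpha\deg Q$ --- larger by a factor of $p$, and your initial state $R\,Q^{p^\alpha - 1}$ has correspondingly higher degree than the paper's $R\,Q^{p^{\alpha-1}-1}$. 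In fact the paper explicitly discusses both variants at the end of Section~\ref{easy_algorithm}: it adopts the former because the lower-degree states are faster to compute with, but notes that your version can produce automata with fewer states, since applying $\Lambda_{d,d}$ before the multiplication discards off-diagonal monomials and can collapse states that would otherwise differ only by such monomials. Both establish finiteness of $\mathcal{S}$ and hence of the $p$-kernel, so your argument is a valid proof.
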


\begin{proof}
Since $Q(0,\ldots,0) \nequiv 0 \mod p$, we can expand $R(x_1,\ldots, x_k)/Q(x_1, \ldots,x_k)$ as a power series whose coefficients are $p$-adic integers.

Let $\mathcal{A} = \Z_p/(p^\alpha\Z_p)$.
By Proposition~\ref{Cartier extraction}, for $s(x_1, \ldots, x_k) \in \Z_p[x_1,\ldots, x_k]$ we have
\begin{align*}
	\Lambda_{d_1, \ldots, d_k}\left(\frac{s(x_1, \ldots, x_k)}{Q(x_1, \ldots, x_k)^{p^{\alpha-1}}}\right)
	&= \Lambda_{d_1, \ldots, d_k}\left(\frac{s(x_1, \ldots, x_k) \cdot Q(x_1, \ldots, x_k)^{p^\alpha - p^{\alpha-1}}}{Q(x_1, \ldots, x_k)^{p^\alpha}}\right) \\
	&\equiv \frac{\Lambda_{d_1, \ldots, d_k}\left(s(x_1, \ldots, x_k) \cdot Q(x_1, \ldots, x_k)^{p^\alpha - p^{\alpha-1}}\right)}{Q(x_1, \ldots, x_k)^{p^{\alpha-1}}} \mod p^\alpha.
\end{align*}
Since the denominator $Q(x_1, \ldots, x_k)^{p^{\alpha-1}}$ appears both in the initial and final expression, we consider the map $\mu_{d_1, \ldots, d_k}$ from $\mathcal{A}[x_1, \ldots, x_k]$ to itself given by
\[
	\mu_{d_1, \ldots, d_k}(s(x_1, \ldots, x_k))
	\colonequal
	\Lambda_{d_1, \ldots, d_k}\left(s(x_1, \ldots, x_k) \cdot Q(x_1, \ldots, x_k)^{p^\alpha - p^{\alpha-1}}\right) \bmod p^\alpha.
\]
Let $\deg s(x_1, \ldots, x_k) \colonequal \max_{1 \leq i \leq k} \deg_{x_i} s(x_1, \ldots, x_k)$ be the degree of a polynomial $s(x_1, \ldots, x_k)$.
The degree of $\mu_{d_1, \ldots, d_k}(s(x_1, \ldots, x_k))$ is at most
\[
	\frac{1}{p} \left(\deg s(x_1, \ldots, x_k) + (p^\alpha - p^{\alpha-1}) \deg Q(x_1, \ldots, x_k)\right).
\]
The fixed point of the map
\[
	m \mapsto \frac{1}{p} \left(m + (p^\alpha - p^{\alpha-1}) \deg Q(x_1, \ldots, x_k)\right)
\]
is $p^{\alpha-1} \deg Q(x_1, \ldots, x_k)$.
Let
\[
	m = \max\left\{\deg \left(R(x_1, \ldots, x_k) \cdot Q(x_1, \ldots, x_k)^{p^{\alpha-1} - 1}\right), p^{\alpha-1} \deg Q(x_1, \ldots, x_k)\right\},
\]
and let $\mathcal{S}$ be the set of all polynomials in $\mathcal{A}[x_1, \ldots, x_k]$ with degree at most $m$.
Then $\left(R(x_1, \ldots, x_k) \cdot Q(x_1, \ldots, x_k)^{p^{\alpha-1} - 1} \bmod p^\alpha\right) \in \mathcal{S}$, and if $s(x_1, \ldots, x_k) \in \mathcal{S}$ then $\mu_{d_1, \ldots, d_k}(s(x_1, \ldots, x_k)) \in \mathcal{S}$.
Therefore $\mathcal{D} (R(x_1, \ldots, x_k)/Q(x_1, \ldots, x_k) \bmod p^\alpha) \in \mathcal{D}\left(\mathcal{S}/Q(x_1, \ldots, x_k)^{p^{\alpha-1}}\right)$, and $\mathcal{S}$ is closed under $\mu_{d_1, \ldots, d_k}$.
Since
\[
	\Lambda_d\left( \mathcal{D}\left( \frac{s(x_1, \ldots, x_k)}{Q(x_1, \ldots, x_k)^{p^{\alpha-1}}} \right) \right)
	\equiv \mathcal{D}\left( \frac{\mu_{d,\ldots, d}(s(x_1, \ldots, x_k))}{Q(x_1, \ldots, x_k)^{p^{\alpha-1}}} \right) \mod p^\alpha,
\]
the finiteness of $\ker_p(\mathcal{D} (R(x_1, \ldots, x_k)/Q(x_1, \ldots, x_k) \bmod p^\alpha))$ now follows from the finiteness of $\mathcal{S}$.
By Theorem~\ref{Eilenberg}, the sequence of coefficients is $p$-automatic.
\end{proof}

The relationships between the elements of the $p$-kernel of $(a_n)_{n \geq 0}$ encode a finite automaton for $(a_n)_{n \geq 0}$ in which each state corresponds to an element of the $p$-kernel and where $p$ outgoing edges from a state point to its images under $\Lambda_d$.
Therefore we see from the proof of Theorem~\ref{diagonal closure} that an automaton for the coefficients of $\mathcal{D} (R(x_1, \ldots, x_k)/Q(x_1, \ldots, x_k))$ modulo $p^\alpha$ can be computed as follows.

We perform all arithmetic in $\mathcal{A} = \Z/(p^\alpha\Z) \cong \Z_p/(p^\alpha\Z_p)$.
Multiply $R(x_1, \ldots, x_k)$ and $Q(x_1, \ldots, x_k)$ by $Q(0, \ldots, 0)^{-1}$, so that we may assume $Q(0, \ldots, 0) = 1$.
Let the initial state be $R(x_1, \ldots, x_k) \cdot Q(x_1, \ldots, x_k)^{p^{\alpha-1} - 1} \in \mathcal{S}$.
Apply each $\mu_{d, \ldots, d}$, for $0 \leq d \leq p - 1$, to the initial state, obtaining $p$ elements of $\mathcal{S}$.
Some of these polynomials may coincide with the initial state, in which case we have already computed their images under $\mu_{d, \ldots, d}$.
For the polynomials whose images under $\mu_{d, \ldots, d}$ have not yet been computed, compute them.
Iterate, and stop when all images have been computed.
Draw an edge labeled $d$ from $s(x_1, \dots, x_k)$ to $t(x_1, \dots, x_k)$ if $\mu_{d,\dots,d}(s(x_1, \dots, x_k)) = t(x_1, \dots, x_k)$.
The automaton's output corresponding to each state $s(x_1, \ldots, x_k)$ is the constant term of the series $s(x_1, \ldots, x_k) / Q(x_1, \ldots, x_k)^{p^{\alpha-1}}$; since $Q(0, \ldots, 0) = 1$, this constant term is $s(0, \ldots, 0)$.

Algorithm~\ref{slow diagonal algorithm} contains a more formal description.
Many of our applications, to be discussed shortly, will only require rational expressions in two variables, so for concreteness Algorithm~\ref{slow diagonal algorithm} is written for a bivariate expression $R(x,y)/Q(x,y)$.
The input consists of a prime $p$, an integer $\alpha \geq 1$, and polynomials $R(x, y), Q(x, y) \in \mathcal{A}[x, y]$ such that $Q(0,0) = 1$.
Since all arithmetic is performed in $\mathcal{A} = \Z/(p^\alpha\Z)$, $R(x,y)$ and $Q(x,y)$ can be given as polynomials with coefficients in this ring, even if they started as polynomials with integer or $p$-adic integer coefficients.

The output of Algorithm~\ref{slow diagonal algorithm} is a finite automaton represented as a $6$-tuple as in Definition~\ref{dfao}.
We construct the functions $\delta$ and $\omega$ one state at a time, so it will be convenient to represent these functions as sets of pairs.
The pair $n \to a$ in the set $\omega$ represents the value $\omega(n) = a$, the output corresponding to state $n$.
The pair $(n, d) \to i$ in the set $\delta$ represents the value $\delta(n, d) = i$, which corresponds to a directed edge from state $n$ to state $i$ that is labeled by $d$.
We maintain $n$ as the index of the state we are currently examining and $m$ as the total number of states.

\begin{algorithm}\label{slow diagonal algorithm}
\SetArgSty{}
\DontPrintSemicolon
\KwIn{$(R(x, y), Q(x, y), p, \alpha) \in \mathcal{A}[x, y] \times \mathcal{A}[x, y] \times \mathbb{P} \times \Z_{\geq 1}$ with $Q(0,0) = 1$}

$\delta \leftarrow \varnothing$\;

$m \leftarrow 1$\;

$s_1(x,y) \leftarrow R(x, y) \cdot Q(x, y)^{p^{\alpha-1} - 1}$\;

$n \leftarrow 1$\;

\While{$n \leq m$}{

	\For{$d \in \{0, 1, \dots, p-1\}$}{

		$s(x,y) \leftarrow \Lambda_{d,d}\left(s_n(x, y) \cdot Q(x, y)^{p^\alpha - p^{\alpha-1}}\right)$\;

		\eIf{$s(x, y) \in \{s_1(x, y), s_2(x, y), \dots, s_m(x, y)\}$}{
			$\delta \leftarrow \delta \cup \{(n, d) \to i\}$, where $s(x, y) = s_i(x, y)$\;
		}{
			$m \leftarrow m + 1$\;
			$s_m(x, y) \leftarrow s(x, y)$\;
			$\delta \leftarrow \delta \cup \{(n, d) \to m\}$\;
		}

	}

	$n \leftarrow n + 1$\;

}

$\omega \leftarrow \{1 \to s_1(0, 0), 2 \to s_2(0, 0), \dots, m \to s_m(0, 0)\}$\;

\Return $(\{1, 2, \dots, m\}, \Sigma_p, \delta, 1, \mathcal A, \omega)$\;
\caption{Computing an automaton for the diagonal of a bivariate rational expression $R(x, y)/Q(x, y)$ modulo $p^\alpha$.}
\end{algorithm}

\begin{remark} \label{first estimate}We can give a crude upper bound on the number of states in the automaton output by Algorithm~\ref{slow diagonal algorithm} by computing the number of polynomials in $\mathcal{S}$. Let $L= \max\{\deg R(x_1, \dots, x_k), \deg Q(x_1, \dots, x_k) \}$.
In the notation of the proof of Theorem~\ref{diagonal closure}, we have $m \leq   p^{\alpha-1} L$, so $|\mathcal{S}|\leq p^{\alpha(p^{\alpha-1}L+1)^k}$. Since the state set can be injected into $\mathcal{S}$, this gives us an upper bound for the number of states, although in practice this appears to be a vast overestimate.
The running time of Algorithm~\ref{slow diagonal algorithm} is linear in the number of states of the automaton; consequently we do not have good bounds on the running time.
\end{remark}

One of our primary uses of Theorem~\ref{diagonal closure} will be in conjunction with the following result of Furstenberg~\cite[Proposition~2]{Fur}.
Given an appropriate polynomial for which a power series $f(x)$ is a root, it constructs a rational expression of which $f(x)$ is the diagonal.
A straightforward generalization to multivariate power series was given by Denef and Lipshitz~\cite[Lemma~6.3]{Denef--Lipshitz}.

\begin{proposition}\label{Furstenberg}
Let $P(x, y) \in \Z_p[x, y]$ such that $\frac{\partial P}{\partial y}(0, 0) \neq 0$.
If $f(x) = \sum_{n \geq 0} a_n x^n \in \Z_p\llbracket x \rrbracket$ is a power series such that $a_0 = 0$ and $P(x, f(x)) = 0$, then
\[
	f(x) = \mathcal{D}\left( \frac{y^2 \frac{\partial P}{\partial y}(x y, y)}{P(x y, y)} \right).
\]
\end{proposition}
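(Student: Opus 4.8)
The plan is to realize the diagonal as a residue and evaluate it by the residue theorem; after a change of variables the only pole of the resulting rational function lying inside a small contour will be at $t = f(x)$, and it will contribute exactly $f(x)$.

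First I would record what the hypotheses give. Since $f(0) = a_0 = 0$ and $P(x, f(x)) = 0$, we have $P(0,0) = 0$, and combined with $\frac{\partial P}{\partial y}(0,0) \neq 0$ this forces $P(xy, y) = y\, U(x,y)$ for some $U \in \Z_p\llbracket x,y\rrbracket$ with $U(0,0) = \frac{\partial P}{\partial y}(0,0) \neq 0$. Hence $H(x,y) \colonequal \frac{y^2 \frac{\partial P}{\partial y}(xy,y)}{P(xy,y)} = \frac{y\, \frac{\partial P}{\partial y}(xy,y)}{U(x,y)}$ lies in $\Q_p\llbracket x,y\rrbracket$ and is divisible by $y$, so $\mathcal{D}(H)$ is a well-defined power series. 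Also $f(x)$ is a \emph{simple} root of $P(x, \cdot)$: the power series $\frac{\partial P}{\partial y}(x, f(x))$ has nonzero constant term $\frac{\partial P}{\partial y}(0,0)$, so it is not identically zero. And $f(x)$ is the unique root of $P(x, \cdot)$ whose constant term is $0$, because $0$ is a simple root of the polynomial $P(0, y)$, so every other root of $P(x, \cdot)$ specializes at $x = 0$ to a nonzero root of $P(0, y)$ (or escapes to infinity if $\deg_y P$ drops at $x = 0$). Finally, the coefficients of both $f(x)$ and $\mathcal{D}(H)(x)$ are given by the same universal polynomial expressions over $\Q$ in the coefficients of $P$ and in $\frac{\partial P}{\partial y}(0,0)^{-1}$ — for $f$ by Newton's iteration, for $\mathcal{D}(H)$ by expanding $1/U$ as a geometric series — so it suffices to prove the asserted identity after specializing the coefficients of $P$ to algebraically independent complex transcendentals (subject only to $P(0,0) = 0$); the general case then follows by a further specialization. (Alternatively one can run the computation below formally over a field of Puiseux series, avoiding analysis entirely.)

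So assume the coefficients of $P$ are complex, whence $H$ is analytic near $(0,0)$. For small $x$ the standard diagonal formula gives $\mathcal{D}(H)(x) = \frac{1}{2\pi i}\oint_{|y| = \epsilon} H(y, x/y)\, \frac{dy}{y}$, with $\epsilon$ taken in the annulus on which $H(y, x/y)$ converges. Since $H(y, x/y) = \frac{(x/y)^2 \frac{\partial P}{\partial y}(x, x/y)}{P(x, x/y)}$, I would then apply the substitution $t = x/y$ (so $\frac{dy}{y} = -\frac{dt}{t}$, and the contour reverses orientation), turning this into $\mathcal{D}(H)(x) = \frac{1}{2\pi i}\oint_{|t| = \rho} \frac{t\, \frac{\partial P}{\partial y}(x, t)}{P(x, t)}\, dt$ for a circle $|t| = \rho$ of small fixed radius. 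The integrand is $t\, \frac{d}{dt}\log P(x, t)$, whose poles in $t$ are precisely the roots of $P(x, \cdot)$; by the preceding paragraph, for $\rho$ small enough and $x$ small enough the only pole inside $|t| = \rho$ is the simple pole at $t = f(x)$, with residue $f(x)$. The residue theorem therefore yields $\mathcal{D}(H)(x) = f(x)$ for small complex $x$, hence as an identity of formal power series, and then over $\Z_p$ by the specialization argument.

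The step needing the most care is the change of variables $t = x/y$: one must verify that the annulus on which the Laurent series of $H(y, x/y)$ is taken corresponds, under $t = x/y$, to a contour $|t| = \rho$ small enough to enclose the root $f(x)$ but none of the other roots of $P(x, \cdot)$. Equivalently, in the formal version one must expand $\frac{1}{x/z - f(x)}$ as a power series in $z$ but each $\frac{1}{x/z - w_i(x)}$, for a root $w_i$ with $w_i(0) \neq 0$, as a series in $1/z$, so that on extracting the coefficient of $z^0$ only the contribution $f(x)$ of the $f(x)$-term survives. Getting the sign introduced by the orientation reversal in the substitution correct is the one other place to be watchful.
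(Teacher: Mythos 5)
The paper states Proposition~\ref{Furstenberg} without proof, citing Furstenberg's original paper; so there is no ``paper's own proof'' to compare against, only Furstenberg's. Your argument is essentially Furstenberg's: realize the diagonal as a contour integral of $H(y, x/y)\,\tfrac{dy}{y}$, change variables to $t = x/y$ to get the logarithmic-derivative integrand $t\,P_y(x,t)/P(x,t)$, and pick out the unique simple pole at $t = f(x)$, whose residue is $f(x)$. Your bookkeeping is correct (the two sign changes from $\tfrac{dy}{y} = -\tfrac{dt}{t}$ and the orientation reversal cancel, and you correctly isolate $f(x)$ as the only root of $P(x,\cdot)$ inside the contour because $0$ is a simple root of $P(0,\cdot)$). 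The one place your version differs from Furstenberg's exposition is the passage to $\C$ via the ``generic transcendental coefficients'' specialization; that step is sound but slightly delicate, and the purely formal Laurent/Puiseux-series route you mention in your last paragraph is both what Furstenberg actually does and the cleaner choice here, since it avoids the transfer argument entirely and works directly over $\Q_p$.
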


Under the conditions of Proposition~\ref{Furstenberg}, it follows from $P(x, f(x)) = 0$ and $f(0) = 0$ that $P(0, 0) = 0$.
Therefore, we can factor $y$ out of $P(x y, y)$, and $P(x y, y) / y$ has a nonzero constant term.
If
\begin{equation}\label{partial derivative}
	\tfrac{\partial P}{\partial y}(0, 0) \nequiv 0 \mod p,
\end{equation}
one can compute an automaton for $a_n \bmod p^\alpha$ by letting $c = \left(\frac{\partial P}{\partial y}(0,0)\right)^{-1} \bmod p^\alpha$ and executing Algorithm~\ref{slow diagonal algorithm} on the input
\[
	\left(c \cdot y \cdot \tfrac{\partial P}{\partial y}(x y, y), \, c \cdot P(x y, y)/y, \, p, \, \alpha\right).
\]

If $a_0 \neq 0$ for a given power series $f(x) = \sum_{n \geq 0} a_n x^n$ whose coefficients we would like to determine modulo $p^\alpha$, we must instead consider $f(x) - a_0$ or another modification.
The reader may now wish to turn to Section~\ref{Congruences}, which contains many examples.

As written, the polynomial arithmetic performed in Algorithm~\ref{slow diagonal algorithm} is quite slow for large-degree polynomials.
One computational task that we repeat many times is multiplication by $Q(x,y)^{p^\alpha - p^{\alpha-1}}$.
A simple observation to improve speed is that we should only expand $T(x,y) \colonequal Q(x,y)^{p^\alpha - p^{\alpha-1}}$ once.

Next, observe that each $\Lambda_{d,d}$ discards all monomials $a_{n,m} x^n y^m$ in the product $s(x, y) \cdot T(x, y)$ for which $n \nequiv m \mod p$.
Such monomials represent $(p-1)/p$ of all monomials in this product, so we will significantly reduce the number of arithmetic operations performed if we can avoid computing them in the first place.

This can be accomplished by binning the monomials $a_{n,m} x^n y^m$ in both $s(x,y)$ and $T(x,y)$ according to the difference $n - m$.
Define an operator $\Delta_r$ by
\[
	\Delta_r\left(\sum_{n,m \geq 0} a_{n,m} x^n y^m\right) \colonequal \sum_{n - m \equiv r \mod p} a_{n,m} x^n y^m.
\]
Then the sum
\[
	\sum_{r=0}^{p-1} \Delta_r(s(x,y)) \cdot \Delta_{p-r}(T(x,y))
\]
is the sum of the monomials in $s(x,y) \cdot T(x,y)$ whose exponents are congruent to each other modulo $p$.
Algorithm~\ref{fast diagonal algorithm} incorporates this improvement.
The \textit{Mathematica} implementation used to compute the results in Section~\ref{Congruences} is available from the web site of the first author.

\begin{algorithm}\label{fast diagonal algorithm}
\SetArgSty{}
\DontPrintSemicolon
\KwIn{$(R(x, y), Q(x, y), p, \alpha) \in \mathcal{A}[x, y] \times \mathcal{A}[x, y] \times \mathbb{P} \times \Z_{\geq 1}$ with $Q(0,0) = 1$}

$\delta \leftarrow \varnothing$\;

$m \leftarrow 1$\;

$s_1(x,y) \leftarrow R(x, y) \cdot Q(x, y)^{p^{\alpha-1} - 1}$\;

$n \leftarrow 1$\;

$T(x,y) \leftarrow Q(x,y)^{p^\alpha - p^{\alpha-1}}$\;

\For{$r \in \{0, 1, \dots, p-1\}$}{
	$T_r(x,y) \leftarrow \Delta_r(T(x,y))$\;
}

\While{$n \leq m$}{

	\For{$d \in \{0, 1, \dots, p-1\}$}{

		$s(x,y) \leftarrow \Lambda_{d,d}\left(\sum_{r=0}^{p-1} \Delta_r(s_n(x,y)) \cdot T_{p-r}(x,y)\right)$\;

		\eIf{$s(x, y) \in \{s_1(x, y), s_2(x, y), \dots, s_m(x, y)\}$}{
			$\delta \leftarrow \delta \cup \{(n, d) \to i\}$, where $s(x, y) = s_i(x, y)$\;
		}{
			$m \leftarrow m + 1$\;
			$s_m(x, y) \leftarrow s(x, y)$\;
			$\delta \leftarrow \delta \cup \{(n, d) \to m\}$\;
		}

	}

	$n \leftarrow n + 1$\;

}

$\omega \leftarrow \{1 \to s_1(0, 0), 2 \to s_2(0, 0), \dots, m \to s_m(0, 0)\}$\;

\Return $(\{1, 2, \dots, m\}, \Sigma_p, \delta, 1, \mathcal A, \omega)$\;
\caption{Computing an automaton for the diagonal of a rational expression $R(x, y)/Q(x, y)$ modulo $p^\alpha$, using fewer operations than Algorithm~\ref{slow diagonal algorithm}.}
\end{algorithm}

Finally, note that all $\Delta_r(s(x,y))$ for $r \in \{0, 1, \dots, p-1\}$ can be computed with one pass through $s(x,y)$ rather than $p$ passes.
Similarly, the images of a polynomial under all $\Lambda_{d,d}$ for $d \in \{0, 1, \dots, p-1\}$ can be computed with one pass through the polynomial.

We mention that a different map $\mu_{d_1, \dots, d_k}$ could have been used in the proof of Theorem~\ref{diagonal closure}, and this map yields a slightly different algorithm.
Namely, we have
\begin{align*}
	\Lambda_{d_1, \ldots, d_k}\left(\frac{s(x_1, \ldots, x_k)}{Q(x_1, \ldots, x_k)^{p^\alpha}}\right)
	&\equiv \frac{\Lambda_{d_1, \ldots, d_k}(s(x_1, \ldots, x_k))}{Q(x_1, \ldots, x_k)^{p^{\alpha-1}}} \mod p^\alpha \\
	&= \frac{\Lambda_{d_1, \ldots, d_k}(s(x_1, \ldots, x_k)) \cdot Q(x_1, \ldots, x_k)^{p^\alpha - p^{\alpha-1}}}{Q(x_1, \ldots, x_k)^{p^\alpha}}.
\end{align*}
(Note that in~\cite[Remark~6.6]{Denef--Lipshitz} the exponent in the numerator of this expression is incorrect.)
In two variables, the map suggested by this identity is
\[
	s(x, y) \mapsto \Lambda_{d,d}(s(x,y)) \cdot Q(x,y)^{p^\alpha - p^{\alpha-1}} \bmod p^\alpha.
\]
Since the exponent in the denominator is $p^\alpha$ rather than $p^{\alpha - 1}$, this map results in a higher maximum degree $m$, so the states $s(x,y)$ have higher degree in this algorithm and are consequently slower to compute with.
The benefit of this algorithm is that it sometimes produces automata with fewer states relative to Algorithms~\ref{slow diagonal algorithm} and \ref{fast diagonal algorithm}.
This may be related to the fact that $\Lambda_{d,d}$ applies to $s(x, y)$ before multiplication by $Q(x,y)^{p^\alpha - p^{\alpha-1}}$, so monomials $a_{n,m} x^n y^m$ in $s(x, y)$ for which $n \nequiv m \mod p$ are discarded.
When these monomials are omitted from each state, two states which previously differed only by monomials with incongruent exponents collapse into a single state.

\section{Congruences}\label{Congruences}

In this section we consider a number of combinatorial sequences and give congruences that were proved (and in most cases also discovered) by computing a finite automaton for the sequence modulo $p^\alpha$ using Algorithm~\ref{fast diagonal algorithm}.
Details of the computations appear on the web site of the first author\footnote{\url{http://thales.math.uqam.ca/~rowland/packages.html\#IntegerSequences} as of this writing.}.

\subsection{Catalan numbers}\label{Catalan numbers}

Let $C(n) = \frac{1}{n+1} \binom{2n}{n}$.
The sequence $C(n)_{n \geq 0} = 1, 1, 2, 5, 14, 42, 132, 429, \dots$ of Catalan numbers~\seq{A000108} is arguably the most important sequence in combinatorics.
Catalan numbers were studied from an arithmetic perspective by Alter and Kubota~\cite{Alter--Kubota}.
Even before their work, the value of $C(n) \bmod 2$ was already known.

\begin{theorem}\label{Catalan mod 2}
For all $n \geq 0$, $C(n)$ is odd if and only if $n = 2^k - 1$ for some $k \geq 0$.
\end{theorem}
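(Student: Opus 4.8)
The plan is to derive this classical result directly from the automata machinery developed in the paper, rather than from Kummer's theorem on binomial coefficients. First I would obtain a polynomial equation over $\Z$ (hence over $\Z_2$) satisfied by the Catalan generating function. Recall that $C(x) = \sum_{n \geq 0} C(n) x^n$ satisfies $x C(x)^2 - C(x) + 1 = 0$. Since $C(0) = 1 \neq 0$, Proposition~\ref{Furstenberg} does not apply directly, so I would work instead with $f(x) \colonequal x C(x) = \sum_{n \geq 0} C(n) x^{n+1}$, which has constant term $0$ and satisfies a polynomial $P(x, y)$ with $\frac{\partial P}{\partial y}(0,0) \nequiv 0 \bmod 2$; concretely, from $x C(x)^2 - C(x) + 1 = 0$ one gets $f(x)^2 - f(x) + x = 0$, so $P(x,y) = y^2 - y + x$ and $\frac{\partial P}{\partial y}(0,0) = -1 \equiv 1 \bmod 2$. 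Then Proposition~\ref{Furstenberg} expresses $f$ as the diagonal of a bivariate rational expression whose denominator has constant term $\pm 1$, and Theorem~\ref{diagonal closure} (with $\alpha = 1$, $p = 2$) guarantees that $(C(n) \bmod 2)_{n \geq 0}$ is $2$-automatic, via an automaton computable by Algorithm~\ref{fast diagonal algorithm}.

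Next I would run (or describe running) the algorithm to produce the explicit automaton for $C(n) \bmod 2$. Because $\alpha = 1$, the map $\mu_{d,d}$ is simply $s(x,y) \mapsto \Lambda_{d,d}(s(x,y) \cdot Q(x,y)^{p^\alpha - p^{\alpha-1}})$ with $p^\alpha - p^{\alpha - 1} = 1$, so the state set is a small collection of low-degree polynomials mod $2$. The resulting automaton is tiny — one expects a handful of states — and reading it off shows that, processing base-$2$ digits least-significant-first, the output is $1$ exactly on inputs whose digit string (after accounting for the index shift between $f$ and $C$) consists of a block of $1$s, i.e. on $n$ with binary representation $1\cdots1$. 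Equivalently, $C(n) \bmod 2 = 1$ iff $n = 2^k - 1$ for some $k \geq 0$. The verification that the computed automaton accepts exactly $\{\,2^k - 1 : k \geq 0\,\}$ is a finite check on the transition diagram.

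The main obstacle is bookkeeping rather than conceptual difficulty: one must handle the constant-term issue correctly (Catalan numbers start with $C(0) = 1$, so the naive application of Proposition~\ref{Furstenberg} is blocked and the index shift from $f(x) = xC(x)$ must be tracked through the automaton's output), and one must actually carry out the polynomial arithmetic modulo $2$ to pin down the states and edges. Since everything happens modulo $2$ with tiny polynomials, this is routine. As an alternative sanity check, one could bypass the diagonal construction entirely: from $f^2 - f + x = 0$ over $\F_2$ we get $f^2 = f + x$, hence $\Lambda_0(f^2) = \Lambda_0(f) + 1$ and $\Lambda_1(f^2) = \Lambda_1(f)$; using $\Lambda_d(g^2) = (\Lambda_d g)^2$ in $\F_2\llbracket x \rrbracket$ together with the identity $f = \sum_d x^d (\Lambda_d f)^2$, a short computation on the $2$-kernel $\{f, 1, 0, \dots\}$ reproduces the same automaton and the same conclusion, confirming Theorem~\ref{Catalan mod 2}.
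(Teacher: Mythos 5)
Your proposal follows the same route as the paper: substitute to kill the constant term, check that $\partial P/\partial y(0,0) \nequiv 0 \bmod 2$, invoke Proposition~\ref{Furstenberg} and Theorem~\ref{diagonal closure}, run the algorithm for $p = 2$, $\alpha = 1$, and read the theorem off the transition diagram. One point to correct: the paper works with $y = C(x) - 1$, which keeps the index aligned so that the coefficient of $x^n$ is $C(n)$ for $n \geq 1$, and the resulting automaton outputs $1$ precisely on inputs whose binary representation is $1\cdots1$. Your choice $f = xC(x)$ shifts the index, so the coefficient of $x^n$ is $C(n-1)$; the automaton you would obtain outputs $1$ on inputs whose binary representation is $10\cdots0$ (i.e.\ $n = 2^k$), and the translation to the theorem goes through the shift $m = n - 1$. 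Your sentence describing the accepted inputs as ``a block of $1$s'' matches the paper's normalization rather than yours --- the conclusion is still right, but the intermediate description is inconsistent with your own substitution. Finally, the closing sanity check is correct and, for $\alpha = 1$, actually cleaner than the diagonal route: from $f^2 + f + x = 0$ over $\F_2$ one gets $\Lambda_0 f = f$ and $\Lambda_1 f = 1$ directly, the $2$-kernel is $\{f, 1, 0\}$, and the three-state automaton visibly accepts exactly the powers of $2$; this bypasses Furstenberg and Theorem~\ref{diagonal closure} entirely by going straight through Theorems~\ref{Christol} and~\ref{Eilenberg}.
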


Eu, Liu, and Yeh~\cite{Eu--Liu--Yeh} determined the value of $C(n)$ modulo $4$ and, more generally, modulo $8$.
Xin and Xu~\cite{Xin--Xu} provided shorter proofs of these results.
Modulo $4$, the Catalan numbers have the following forbidden residue.

\begin{theorem}[Eu--Liu--Yeh]\label{Catalan mod 4}
For all $n \geq 0$, $C(n) \nequiv 3 \mod 4$.
\end{theorem}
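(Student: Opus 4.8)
The plan is to apply the machinery developed in Section~\ref{easy_algorithm} directly to the Catalan number generating function. Recall that $\sum_{n \geq 0} C(n) x^n = \frac{1 - \sqrt{1 - 4x}}{2x}$, which is algebraic: if we set $f(x) = \sum_{n \geq 0} C(n) x^n$, then $f$ satisfies $x f(x)^2 - f(x) + 1 = 0$. However, to apply Proposition~\ref{Furstenberg} we need a power series with zero constant term, so I would first work with $g(x) \colonequal x f(x) = \sum_{n \geq 0} C(n) x^{n+1}$, or equivalently shift indices so that the relevant series vanishes at $0$. A cleaner route: let $g(x) = f(x) - 1 = \sum_{n \geq 1} C(n) x^n$, which has $g(0) = 0$, and find the polynomial $P(x,y)$ with $P(x, g(x)) = 0$ obtained by substituting $y + 1$ for $f$ in $x f^2 - f + 1$; this gives $P(x,y) = x(y+1)^2 - (y+1) + 1 = x(y+1)^2 - y$. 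Then $\frac{\partial P}{\partial y}(x,y) = 2x(y+1) - 1$, so $\frac{\partial P}{\partial y}(0,0) = -1 \nequiv 0 \bmod p$ for every prime $p$, including $p = 2$; condition~\eqref{partial derivative} holds.

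Next I would invoke Proposition~\ref{Furstenberg} to write $g(x) = \mathcal{D}\!\left( \frac{y^2 \frac{\partial P}{\partial y}(xy, y)}{P(xy,y)} \right)$, compute the numerator $R(x,y)$ and denominator $Q(x,y)$ explicitly (after factoring $y$ out of $P(xy,y)$ and normalizing so that $Q(0,0) = 1$, using $c = (\partial P/\partial y(0,0))^{-1} = -1$), and then run Algorithm~\ref{fast diagonal algorithm} with $p = 2$ and $\alpha = 2$. By Theorem~\ref{diagonal closure}, this terminates and produces a finite $2$-DFAO computing $C(n) \bmod 4$ for $n \geq 1$; the shift by one index is harmless since we only care about $n \geq 0$ and $C(0) = 1 \equiv 1 \bmod 4$, which is handled separately (or the automaton is adjusted at the root). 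In fact the six-state automaton displayed in the Example in Section~\ref{Notation} is exactly this automaton. Having the explicit automaton in hand, the theorem reduces to a finite check: inspect the output function $\omega$ of the automaton and verify that no state outputs $3$. Since all reachable states have outputs in $\{0, 1, 2\}$, we conclude $C(n) \nequiv 3 \bmod 4$ for all $n \geq 1$, and the case $n = 0$ is immediate.

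The main obstacle — such as it is — is purely bookkeeping rather than conceptual: one must carefully track the index shift introduced by passing from $f$ to $g = f - 1$ (so that $a_0 = 0$ as required by Proposition~\ref{Furstenberg}), correctly form and normalize the bivariate rational expression $R(xy,y)/Q(xy,y)$, and then trust Algorithm~\ref{fast diagonal algorithm} to enumerate the $2$-kernel of $(C(n) \bmod 4)_{n \geq 1}$. There is no hard analytic step: once the automaton is computed, the statement is a finite verification that the label $3$ does not appear among the (six) state outputs. The only subtlety worth flagging in the write-up is to confirm that $\deg$ stays bounded as guaranteed by the fixed-point argument in the proof of Theorem~\ref{diagonal closure}, so that the \texttt{while} loop genuinely terminates — but this is exactly what that theorem provides, so we may cite it. I would therefore present the proof as: (i) exhibit $P(x,y)$ and check~\eqref{partial derivative}; (ii) apply Proposition~\ref{Furstenberg} and Algorithm~\ref{fast diagonal algorithm} to obtain the automaton; (iii) read off from the automaton that $3$ is not a reachable output, completing the proof.
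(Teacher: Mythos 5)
Your proposal is correct and follows essentially the same route the paper takes: replace $f = \sum_{n\geq 0} C(n)x^n$ by $y = f - 1$ so the constant term vanishes, obtain $P(x,y) = xy^2 + (2x-1)y + x$ with $\frac{\partial P}{\partial y}(0,0) = -1 \nequiv 0 \bmod 2$, apply Proposition~\ref{Furstenberg} to write the series as a diagonal, run Algorithm~\ref{fast diagonal algorithm} with $p=2$, $\alpha=2$ to get the six-state automaton of Figure~\ref{Catalan 2 and 4}, observe that no state has output $3$, and check $n=0$ separately. The only cosmetic slip is the passing reference to an ``index shift'': passing from $f$ to $f-1$ changes only the $n=0$ term, not the indexing, but since you handle $n=0$ by direct inspection this does not affect the argument.
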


Liu and Yeh~\cite{Liu--Yeh} determined $C(n)$ modulo $16$ and $64$, written using piecewise functions with many cases; we argue that finite automata provide more natural notation.

Using the method of Section~\ref{easy_algorithm}, we can generate and prove such results automatically.
Kauers, Krattenthaler, and M\"{u}ller~\cite[Section~5]{Kauers--Krattenthaler--Muller} had a similar goal, and they showed how to produce congruences for $C(n)$ modulo an arbitrary power of $2$, encoded as formal power series rather than finite automata.

\begin{figure}
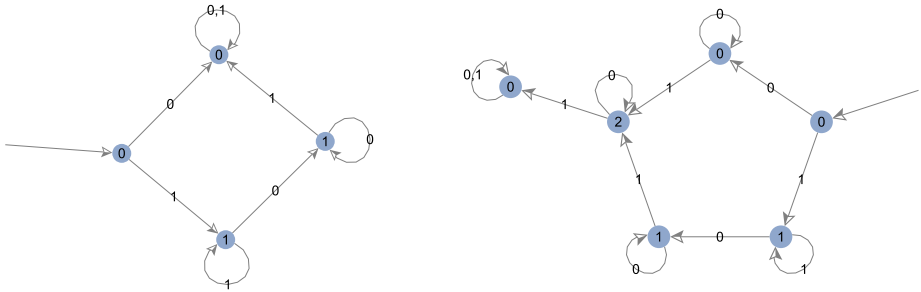

	\begin{center}
		\scalebox{.8}{\vcentergraphics{Catalan2}}
		\scalebox{.8}{\vcentergraphics{Catalan4}}
		\caption{Automata that compute Catalan numbers modulo $2$ (left) and $4$ (right).}
		\label{Catalan 2 and 4}
	\end{center}
\end{figure}

The generating function $z = \sum_{n \geq 0} C(n) x^n$ for the Catalan numbers satisfies
\[
	x z^2 - z + 1 = 0.
\]
Since $C(0) \neq 0$, the series $\sum_{n \geq 0} C(n) x^n$ does not satisfy the conditions of Proposition~\ref{Furstenberg}.
To remedy this, we modify the first term and instead consider the series $y = 0 + \sum_{n \geq 1} C(n) x^n$, which satisfies
\[
	x (y + 1)^2 - (y + 1) + 1 = 0.
\]
Writing this equation as
\[
	P(x, y) \colonequal x y^2 + (2 x - 1) y + x = 0,
\]
we see that $\frac{\partial P}{\partial y}(0, 0) = -1 \nequiv 0 \mod 2$, satisfying Equation~\eqref{partial derivative}.
By Proposition~\ref{Furstenberg}, $\sum_{n \geq 1} C(n) x^n$ is the diagonal of
\[
	\frac{y (2 x y^2 + 2 x y - 1)}{x y^2 + 2 x y + x - 1}.
\]
Reducing the coefficients modulo $2$ and executing Algorithm~\ref{fast diagonal algorithm} on the input
\[
	\left(y, \, x y^2 + x + 1, \, 2, \, 1\right)
\]
yields the automaton on the left in Figure~\ref{Catalan 2 and 4}, whose four states are represented by the polynomials $y, 0, y + 1, 1$.
Remember that this automaton (like the others we compute below) outputs $0$ for $n = 0$; its output is $C(n) \bmod 2$ only for $n \geq 1$.
Modifying finitely many terms of an automatic sequence produces another automatic sequence, so it is possible to modify the automaton so that it outputs $C(0) \bmod 2$ for $n = 0$, repairing the initial term, although we do not undertake this here.

An inspection of this automaton shows that the input string $111 \cdots 1$ outputs $1$.
Moreover, since the most significant digit in the binary representation of $n$ is $1$ for all $n \geq 1$, these are the only input strings that output $1$.
We have therefore proved Theorem~\ref{Catalan mod 2}.

Automata for higher powers of $2$ can be computed similarly.
To prove Theorem~\ref{Catalan mod 4}, we compute an automaton for $C(n) \bmod 4$, obtaining the automaton on the right in Figure~\ref{Catalan 2 and 4}.
It contains six states, none of which correspond to the output $3$.
It remains to check $n = 0$, for which $C(0) = 1 \nequiv 3 \mod 4$.

\begin{figure}
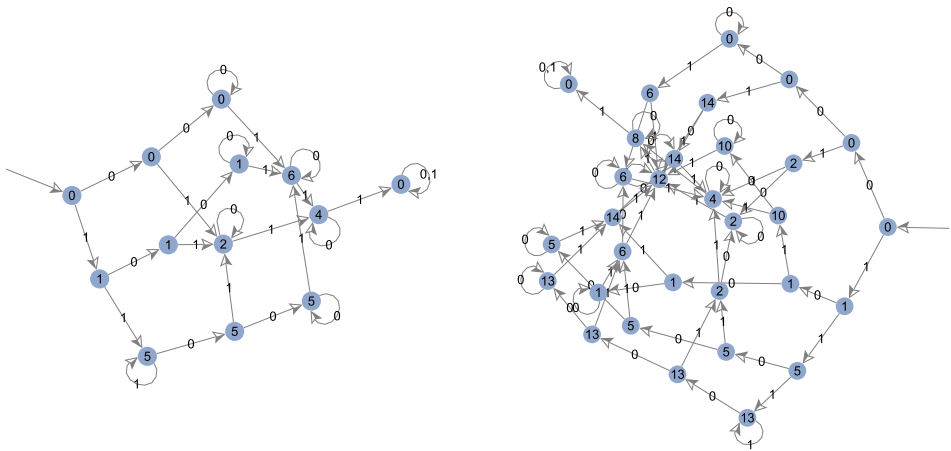

	\begin{center}
		\scalebox{.75}{\vcentergraphics{Catalan8}}
		\scalebox{.75}{\vcentergraphics{Catalan16}}
		\caption{Automata that compute Catalan numbers modulo $8$ (left) and $16$ (right).}
		\label{Catalan 8 and 16}
	\end{center}
\end{figure}

Automata for $C(n)$ modulo $8$ and $16$ appear in Figure~\ref{Catalan 8 and 16}.
In particular, we have the following, which was already explicit in the results of Liu and Yeh~\cite{Liu--Yeh}.

\begin{theorem}[Liu--Yeh]\label{Catalan mod 16}
For all $n \geq 0$, $C(n) \nequiv 9 \mod 16$.
\end{theorem}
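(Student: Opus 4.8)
The plan is to apply the machinery of Section~\ref{easy_algorithm} exactly as in the proofs of Theorems~\ref{Catalan mod 2} and~\ref{Catalan mod 4}, now with $p = 2$ and $\alpha = 4$. First I would start from the defining equation $x z^2 - z + 1 = 0$ for $z = \sum_{n \geq 0} C(n) x^n$ and, since $C(0) \neq 0$, pass to $y = \sum_{n \geq 1} C(n) x^n$, which satisfies $P(x, y) = x y^2 + (2x - 1) y + x = 0$. Because $\frac{\partial P}{\partial y}(0, 0) = -1$ is a unit modulo $16$, Equation~\eqref{partial derivative} holds, so Proposition~\ref{Furstenberg} expresses $\sum_{n \geq 1} C(n) x^n$ as the diagonal of an explicit bivariate rational expression. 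Reducing its numerator and denominator modulo $16$ and normalizing the constant term of the denominator to $1$, I would feed the resulting pair into Algorithm~\ref{fast diagonal algorithm} with $p = 2$ and $\alpha = 4$.

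By Theorem~\ref{diagonal closure} (and the finiteness argument in its proof), the set $\mathcal S$ of polynomials over $\Z/16\Z$ of degree at most $m = \max\{\deg(R \cdot Q^{7}),\, 8 \deg Q\}$ is finite and closed under the maps $\mu_{d,d}$, so the while-loop of Algorithm~\ref{fast diagonal algorithm} terminates and returns a finite automaton; this is the automaton displayed on the right of Figure~\ref{Catalan 8 and 16}. Each state is a polynomial $s(x,y) \bmod 16$, and its output under $\omega$ is the constant term $s(0,0) \bmod 16$. The proof is then completed by the finite verification that none of the reachable states has output $9$. Since the automaton computes $C(n) \bmod 16$ for all $n \geq 1$, this shows $C(n) \nequiv 9 \bmod 16$ for $n \geq 1$, and the remaining case $C(0) = 1 \nequiv 9 \bmod 16$ is immediate.

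The only real obstacle here is computational rather than conceptual. The intermediate polynomials have degree up to $p^{\alpha-1} \deg Q = 8 \cdot \deg Q$ in each of $x$ and $y$, so the bookkeeping is heavier than for $\alpha \leq 2$, and one must carry out (or trust a machine to carry out) the enumeration of reachable states and the inspection of their outputs. There is no case analysis or delicate estimate involved: once the automaton is in hand, the non-occurrence of the residue $9$ is a single finite check, entirely analogous to the forbidden residue $3 \bmod 4$ in Theorem~\ref{Catalan mod 4}.
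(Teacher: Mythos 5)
Your proof takes essentially the same route as the paper: construct the automaton for $C(n) \bmod 16$ via Proposition~\ref{Furstenberg} and Algorithm~\ref{fast diagonal algorithm} with $p = 2$, $\alpha = 4$, then inspect the resulting automaton (the one on the right of Figure~\ref{Catalan 8 and 16}) and observe that the residue $9$ does not occur. One small refinement worth noting, by analogy with the paper's proof of Theorem~\ref{Motzkin mod 8}: since the modified series outputs $0$ at $n=0$, the precise check is that no state reachable by a path whose last-read (most significant) digit is $1$ has output $9$, and your separate verification $C(0) = 1 \nequiv 9$ then covers $n = 0$.
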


That is, the residue class $9$ modulo $16$ is unattained by Catalan numbers, in addition to the classes $3, 7, 11, 15$ modulo $16$, which follow from $C(n) \nequiv 3 \mod 4$.
We omit automata for larger powers of $2$, but we record residues that are not attained.

\begin{theorem}
For all $n \geq 0$,
\begin{itemize}
\item
$C(n) \nequiv 17, 21, 26 \mod 32$,
\item
$C(n) \nequiv 10, 13, 33, 37 \mod 64$,
\item
$C(n) \nequiv 18, 54, 61, 65, 66, 69, 98, 106, 109 \mod 128$,
\item
$C(n) \nequiv 22, 34, 45, 62, 82, 86, 118, 129, 130, 133, 157, 170, 178, 253 \mod 256$,
\item
$C(n) \nequiv 6, 50, 93, 134, 142, 150, 162, 173, 210, 214, 220, 230, 242, 257 \mod 512$, \\
$C(n) \nequiv 258, 261, 270, 285, 294, 298, 348, 381, 382, 422, 446, 478, 502, 510 \mod 512$.
\end{itemize}
\end{theorem}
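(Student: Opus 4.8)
The plan is to carry out, for each modulus $2^\alpha$ with $\alpha \in \{5,6,7,8,9\}$, exactly the procedure that produced the automata for $C(n) \bmod 2, 4, 8, 16$ above, and then read off which residue classes fail to appear as outputs.

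First I recall the setup: $y = \sum_{n \geq 1} C(n) x^n$ satisfies
\[
	P(x, y) \colonequal x y^2 + (2x - 1) y + x = 0,
\]
and $\frac{\partial P}{\partial y}(0,0) = -1 \nequiv 0 \bmod 2$, so Equation~\eqref{partial derivative} holds. By Proposition~\ref{Furstenberg}, $\sum_{n \geq 1} C(n) x^n$ is the diagonal of a bivariate rational expression whose denominator, after clearing the factor $y$, has constant term $-1$, a unit modulo $2^\alpha$. Thus Theorem~\ref{diagonal closure} applies, and the discussion following it shows that an explicit automaton for $(C(n) \bmod 2^\alpha)_{n \geq 1}$ is produced by setting $c = \left(\frac{\partial P}{\partial y}(0,0)\right)^{-1} \bmod 2^\alpha$ and running Algorithm~\ref{fast diagonal algorithm} on the input $\left(c \cdot y \cdot \frac{\partial P}{\partial y}(xy,y),\ c \cdot P(xy,y)/y,\ 2,\ \alpha\right)$.

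Next I run this computation and enumerate the states of the resulting automaton; finiteness of the state set is guaranteed by Theorem~\ref{diagonal closure}, via Theorem~\ref{Eilenberg} and the finiteness of the polynomial set $\mathcal{S}$ in its proof, so the procedure terminates. Collecting the values of the output function $\omega$ over all states gives the set of residues modulo $2^\alpha$ attained by $C(n)$ for $n \geq 1$, and a residue $r$ is forbidden for $n \geq 1$ precisely when no state outputs $r$. Because this automaton outputs $0$ at $n = 0$ rather than $C(0) = 1$, I separately check $n = 0$; since $1$ does not occur in any of the displayed lists, nothing changes. The lists in the statement are then exactly the residue classes with no corresponding state, for $\alpha = 5, \dots, 9$.

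No new mathematical idea is needed beyond the machinery already developed; the sole obstacle is computational scale. The number of states grows with $\alpha$ --- the bound of Remark~\ref{first estimate} is a gross overestimate, but the automata for $\alpha = 8, 9$ are nonetheless sizeable --- and the polynomial arithmetic in Algorithm~\ref{fast diagonal algorithm}, particularly the multiplications by powers of $Q(x,y)$, is the bottleneck. The content of the theorem is therefore the output of a finite, purely mechanical verification carried out with the implementation described in Section~\ref{easy_algorithm}.
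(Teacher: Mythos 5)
Your proposal reproduces the paper's own method precisely: compute the automaton for $C(n) \bmod 2^\alpha$ for $\alpha = 5,\dots,9$ via Proposition~\ref{Furstenberg} and Algorithm~\ref{fast diagonal algorithm}, read off the residues that appear as no state's output, and check $n=0$ separately since the modified series forces the automaton to output $0$ there. This is exactly how the paper establishes the theorem, so there is nothing to add.
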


Only $180/512 \approx 35\%$ of the residues modulo $512$ are attained by some $C(n)$.
Higher powers of $2$ presumably also have new forbidden residues.
This suggests the following question.

\begin{question}
Does the fraction of residues modulo $2^\alpha$ that are attained by some Catalan number tend to $0$ as $\alpha$ gets large?
\end{question}

Lin~\cite{Lin} has shown that for $\alpha \geq 2$ there are exactly $\alpha - 1$ \emph{odd} residues attained modulo $2^\alpha$.
Hence there are $\alpha - 3$ essentially new forbidden odd residues modulo $2^\alpha$ for each $\alpha \geq 3$, and the fraction of odd residues attained does tend to $0$.
A result of Xin and Xu~\cite[Theorem~8]{Xin--Xu} gives some information regarding even residues.

There are also some residues modulo $2^\alpha$ that aren't missed completely but are only attained finitely many times.
This is also easy to determine from an automaton, by identifying the states that can be reached from the initial state in only finitely many ways.
For example, $C(n) \nequiv 1 \mod 8$ for all $n \geq 2$.
Similarly, $C(n) \nequiv 5, 10 \mod 16$ for all $n \geq 6$, and there are examples modulo higher powers of $2$ as well.

\begin{figure}
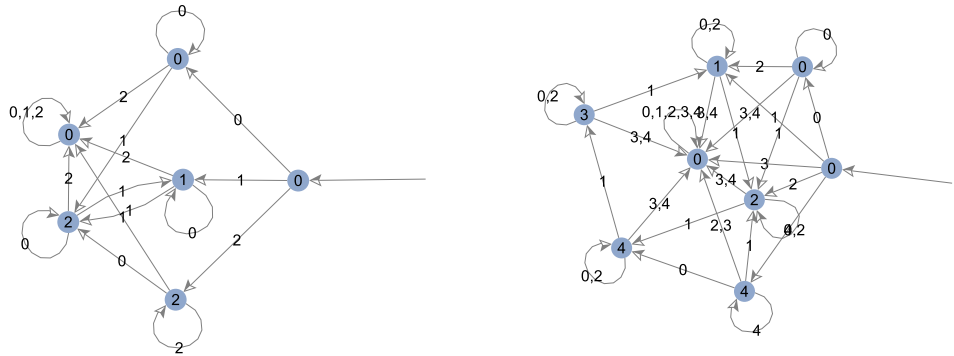

	\begin{center}
		\scalebox{.9}{\vcentergraphics{Catalan3}}
		\scalebox{.9}{\vcentergraphics{Catalan5}}
		\caption{Automata that compute Catalan numbers modulo $3$ (left) and $5$ (right).}
		\label{Catalan 3 and 5}
	\end{center}
\end{figure}

Automata for $C(n) \bmod p^\alpha$ can also be computed for powers of other primes.
For example, the automaton on the left in Figure~\ref{Catalan 3 and 5} computes $C(n) \bmod 3$; this result is a rephrasing of a theorem of Deutsch and Sagan~\cite[Theorem~5.2]{Deutsch--Sagan}.
More generally, we can produce an automaton for $C(n) \bmod 3^\alpha$ for any given $\alpha$, which corresponds to the congruences for $C(n)$ modulo $3^\alpha$ established by Krattenthaler and M\"{u}ller~\cite[Section~12]{Krattenthaler--Muller}.
However, we have not found any forbidden residues.

\begin{question}
Do there exist integers $\alpha \geq 1$ and $r$ such that $C(n) \nequiv r \mod 3^\alpha$ for all $n \geq 0$?
\end{question}

\subsection{Motzkin numbers}

Let $M(n)_{n \geq 0}$ be the sequence $1, 1, 2, 4, 9, 21, 51, 127, \dots$ of Motzkin numbers~\seq{A001006}.
The generating function $z = \sum_{n \geq 0} M(n) x^n$ for the Motzkin numbers satisfies
\[
	x^2 z^2 + (x - 1) z + 1 = 0.
\]

Deutsch, Sagan, and Amdeberhan~\cite[Conjecture~5.5]{Deutsch--Sagan} conjectured necessary and sufficient conditions for $M(n)$ to be divisible by $4$ and by $8$.
This conjecture was proved by Eu, Liu, and Yeh~\cite{Eu--Liu--Yeh}.
In particular, Motzkin numbers have a forbidden residue modulo $8$.

\begin{theorem}[Eu--Liu--Yeh]\label{Motzkin mod 8}
For all $n \geq 0$, $M(n) \nequiv 0 \mod 8$.
\end{theorem}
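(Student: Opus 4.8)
The plan is to apply the machinery of Section~\ref{easy_algorithm} directly. Since $M(0) = 1 \neq 0$, the generating function $z = \sum_{n \geq 0} M(n) x^n$ does not satisfy the hypothesis $a_0 = 0$ of Proposition~\ref{Furstenberg}, so first I would pass to the modified series $y = z - 1 = \sum_{n \geq 1} M(n) x^n$. Substituting $z = y + 1$ into $x^2 z^2 + (x - 1) z + 1 = 0$ and collecting terms yields
\[
	P(x, y) \colonequal x^2 y^2 + (2 x^2 + x - 1) y + (x^2 + x) = 0 .
\]
Here $\tfrac{\partial P}{\partial y}(0, 0) = -1 \nequiv 0 \mod 2$, so the hypothesis of Proposition~\ref{Furstenberg} holds and condition~\eqref{partial derivative} is satisfied for $p = 2$.

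By Proposition~\ref{Furstenberg}, $\sum_{n \geq 1} M(n) x^n$ is the diagonal of $y^2 \tfrac{\partial P}{\partial y}(x y, y) / P(x y, y)$. Because $P(0, 0) = 0$, one factor of $y$ cancels, producing a rational expression $R(x, y)/Q(x, y)$ with $Q(0, 0) = -1 \nequiv 0 \mod 2$; after multiplying $R$ and $Q$ by $(Q(0,0))^{-1} \bmod 8$ to normalize $Q(0, 0) = 1$, I would reduce all coefficients modulo $8$ and run Algorithm~\ref{fast diagonal algorithm} on the input $(R(x,y), Q(x,y), 2, 3)$. Theorem~\ref{diagonal closure} guarantees that this computation terminates with a finite automaton, which by the construction preceding the algorithm computes $M(n) \bmod 8$ for $n \geq 1$ and outputs $0$ for $n = 0$ (since we removed the constant term).

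To conclude, I would inspect the automaton. Its initial state has output $0$, but as in the proof of Theorem~\ref{Catalan mod 2} it is reached only on the empty input, because the most significant base-$2$ digit of every $n \geq 1$ is $1$; thus the values $M(n) \bmod 8$ for $n \geq 1$ are exactly the outputs of the states $\delta(s, 1)$ as $s$ ranges over the states reachable from the initial state. One checks that none of these states has output $0$, which gives $M(n) \nequiv 0 \mod 8$ for all $n \geq 1$, and the remaining case is the direct verification $M(0) = 1 \nequiv 0 \mod 8$. The argument is entirely mechanical once the automaton is in hand; the only steps requiring attention are the algebraic preparation (subtracting the constant term and checking that $\tfrac{\partial P}{\partial y}(0,0)$ is a unit modulo $2$) and the bookkeeping around the initial term. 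The one thing that could in principle make the final inspection impractical — a very large state set — is precisely what Theorem~\ref{diagonal closure} rules out, and in fact the automaton for $M(n) \bmod 8$ turns out to be small enough to survey by hand.
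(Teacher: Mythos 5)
Your proposal matches the paper's proof essentially step for step: same substitution $z = y+1$ (your $P(x,y) = x^2 y^2 + (2x^2 + x - 1)y + (x^2+x)$ is the paper's $x^2 y^2 + (x+1)(2x-1)y + x(x+1)$), same invocation of Proposition~\ref{Furstenberg} and Theorem~\ref{diagonal closure}, and same final inspection that the output-$0$ states cannot be final states for $n \geq 1$ because the most significant binary digit is $1$. The only cosmetic difference is that the paper runs the higher-degree variant of Algorithm~\ref{fast diagonal algorithm} (yielding the $28$-state automaton in Figure~\ref{Motzkin 8}) whereas you propose Algorithm~\ref{fast diagonal algorithm} itself (which the paper notes yields $51$ states); both lead to the same conclusion.
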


\begin{proof}
The series $0 + \sum_{n \geq 1} M(n) x^n$ satisfies
\[
	x^2 y^2 + (x + 1) (2 x - 1) y + x (x + 1) = 0.
\]
We apply the higher-degree variant of Algorithm~\ref{fast diagonal algorithm} described at the end of Section~\ref{easy_algorithm} to this series for $p^\alpha = 8$ to compute the automaton in Figure~\ref{Motzkin 8}.
It has $28$ states.
(Algorithm~\ref{fast diagonal algorithm} produces an automaton with $51$ states.)
Some states correspond to the output $0$, but the only incoming edges for these states are labeled $0$.
Since the binary representation of every integer $n \geq 1$ has most significant digit $1$, none of these states is the final state for an input $n \geq 1$.
On input $n = 0$, the automaton does output $0$, but this output is the constant term of the modified power series and not $M(0) \bmod 8$.
\end{proof}

\begin{figure}
	\begin{center}
		\includegraphics{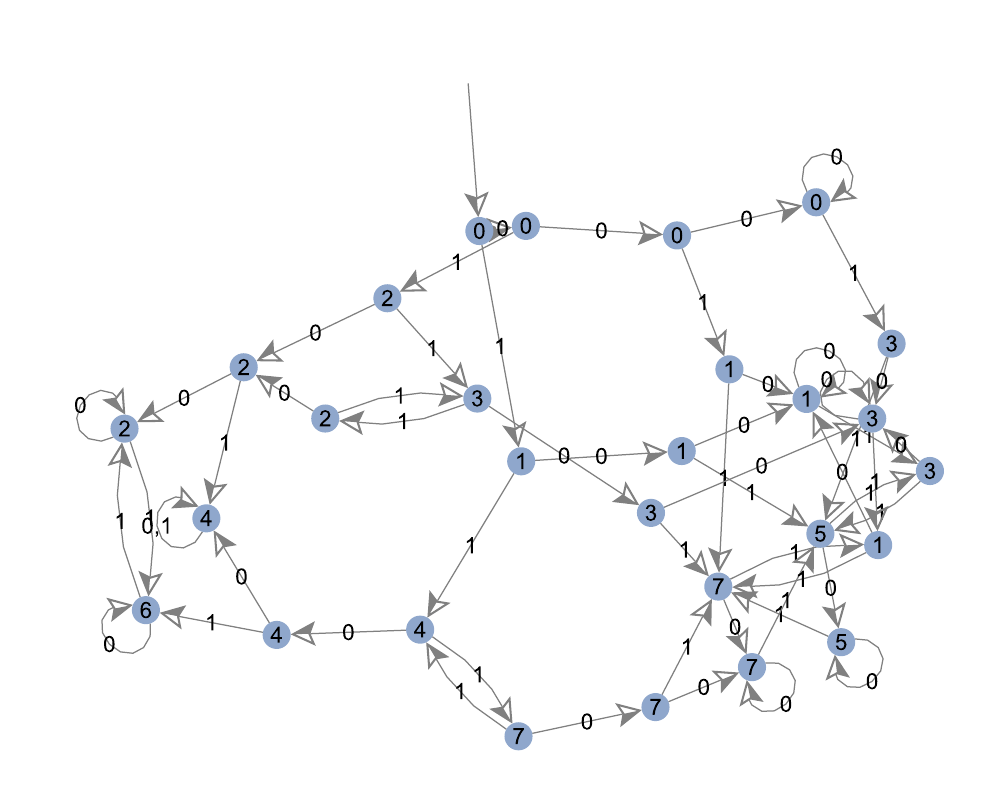}
		\caption{An automaton that computes Motzkin numbers modulo $8$.}
		\label{Motzkin 8}
	\end{center}
\end{figure}

Eu, Liu, and Yeh also gave an expression for $M(n) \bmod 8$ in the case that $M(n)$ is even.
The automaton in Figure~\ref{Motzkin 8} computes $M(n) \bmod 8$ in general.

Deutsch and Sagan~\cite[Corollary~4.10]{Deutsch--Sagan} determined the value of $M(n) \bmod 3$.
More generally, Krattenthaler and M\"{u}ller~\cite[Section~7]{Krattenthaler--Muller} showed how to produce congruences for $M(n) \bmod 3^\alpha$ for any given $\alpha$ in terms of power series.

Deutsch and Sagan~\cite[Theorem~5.4]{Deutsch--Sagan} also determined $M(n) \bmod 5$.
Modulo $5^2$, we can prove the following new theorem.

\begin{theorem}
For all $n \geq 0$, $M(n) \nequiv 0 \mod 5^2$.
\end{theorem}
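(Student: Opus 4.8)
The plan is to follow the template of the proof of Theorem~\ref{Motzkin mod 8} verbatim, only with $p^\alpha = 5^2$ in place of $2^3$. Since $M(0) = 1 \neq 0$, I first pass to the modified series $y = 0 + \sum_{n \geq 1} M(n) x^n$, which (as in the proof of Theorem~\ref{Motzkin mod 8}) is a root of
\[
	P(x, y) \colonequal x^2 y^2 + (x + 1)(2x - 1) y + x(x + 1).
\]
Here $\frac{\partial P}{\partial y}(0, 0) = (0 + 1)(2 \cdot 0 - 1) = -1 \nequiv 0 \mod 5$, so Equation~\eqref{partial derivative} holds for $p = 5$, and Proposition~\ref{Furstenberg} expresses $\sum_{n \geq 1} M(n) x^n$ as the diagonal of $\frac{y^2 \frac{\partial P}{\partial y}(xy, y)}{P(xy, y)}$. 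After factoring $y$ out of $P(xy, y)$ and normalizing so the denominator has constant term $1$, this gives a legitimate input to Algorithm~\ref{fast diagonal algorithm} (or to its higher-degree variant described at the end of Section~\ref{easy_algorithm}) with parameters $p = 5$ and $\alpha = 2$.

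Running the algorithm yields a finite automaton that outputs $M(n) \bmod 25$ for every $n \geq 1$ (and $0$ at $n = 0$, reflecting the modification of the constant term). I then inspect the automaton and expect to find that every state whose $\omega$-value is $0$ has all incoming edges labeled $0$. Granting this, note that the most significant base-$5$ digit of any $n \geq 1$ lies in $\{1, 2, 3, 4\}$, and --- since our convention reads the least significant digit first --- this digit labels the final transition taken on input $n$. Consequently no such state is reached on any input $n \geq 1$, so $M(n) \nequiv 0 \mod 25$ for all $n \geq 1$; the remaining case is immediate from $M(0) = 1 \nequiv 0 \mod 25$.

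The single point requiring care is the inspection step: one must check that $0$ really is an unattained residue, that is, that the computed automaton has the ``incoming edges labeled $0$ only'' property at every $0$-output state. This is a finite verification, but the automaton may be large (the automaton for $M(n) \bmod 8$ already has $28$ states using the higher-degree variant, and $51$ otherwise), so in practice I would use the higher-degree variant to keep the state count manageable and then read off the relevant states. No conceptual difficulty is anticipated --- the computation is entirely mechanical --- so the proof reduces to producing the automaton and exhibiting the structure of its $0$-output states.
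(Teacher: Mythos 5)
Your proposal follows exactly the paper's approach: pass to the modified series $y = \sum_{n \geq 1} M(n) x^n$ with root $P(x,y) = x^2 y^2 + (x+1)(2x-1)y + x(x+1)$, verify $\frac{\partial P}{\partial y}(0,0) = -1 \nequiv 0 \bmod 5$, apply Proposition~\ref{Furstenberg} to obtain a diagonal representation, run Algorithm~\ref{fast diagonal algorithm} with $p = 5$, $\alpha = 2$, and inspect the resulting automaton for $0$-output states reachable on inputs $n \geq 1$. The paper handles this theorem tersely (it simply reports that the automaton has $144$ states and defers to the online implementation), so your fleshed-out account, modeled on the paper's own proof of Theorem~\ref{Motzkin mod 8}, is a faithful and slightly more explicit rendering of the same argument.
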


The automaton computed by Algorithm~\ref{fast diagonal algorithm} for $M(n) \bmod 5^2$ has $144$ states.
We omit it here, but the explicit automaton is available from the web site of the first author.

The sequence of Motzkin numbers does not miss any residues modulo $3^2$, $7^2$, or $11^2$.
However, there is a forbidden residue modulo $13^2$.

\begin{theorem}
For all $n \geq 0$, $M(n) \nequiv 0 \mod 13^2$.
\end{theorem}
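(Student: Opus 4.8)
The plan is to follow the proof of Theorem~\ref{Motzkin mod 8} verbatim, now with the prime power $p^\alpha = 13^2$. First I would replace the Motzkin generating function $z = \sum_{n \geq 0} M(n) x^n$, which has a nonzero constant term and hence fails the hypotheses of Proposition~\ref{Furstenberg}, by the shifted series $y = 0 + \sum_{n \geq 1} M(n) x^n$. Substituting $z = y + 1$ into $x^2 z^2 + (x-1) z + 1 = 0$ yields
\[
	P(x, y) \colonequal x^2 y^2 + (x+1)(2x-1) y + x(x+1) = 0,
\]
and $\frac{\partial P}{\partial y}(0,0) = (1)(-1) = -1 \nequiv 0 \mod 13$, so Equation~\eqref{partial derivative} holds. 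Hence Proposition~\ref{Furstenberg} expresses $\sum_{n \geq 1} M(n) x^n$ as the diagonal of an explicit bivariate rational function whose denominator has constant term a unit modulo $13^2$, and after multiplying through by its inverse modulo $13^2$ we are in the input format of Algorithm~\ref{fast diagonal algorithm}.

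Next I would reduce the coefficients of that rational function modulo $13^2$ and run Algorithm~\ref{fast diagonal algorithm} (or the higher-degree variant described at the end of Section~\ref{easy_algorithm}) with $p = 13$ and $\alpha = 2$. By Theorem~\ref{diagonal closure} the algorithm terminates and, via Theorem~\ref{Eilenberg}, outputs a finite $13$-DFAO computing $M(n) \bmod 169$ for all $n \geq 1$; on input $n = 0$ the automaton returns $0$, which is the constant term of the modified power series rather than $M(0) \bmod 169$. The key combinatorial step — exactly as in the proofs modulo $8$ and modulo $5^2$ — is then to inspect the output function $\omega$ of the resulting automaton and check that every state $s$ with $\omega(s) = 0$ has all of its incoming edges labelled $0$. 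Since the most significant base-$13$ digit of every $n \geq 1$ is nonzero, no such state can be the terminal state on input $n \geq 1$, so $M(n) \nequiv 0 \mod 169$ for all $n \geq 1$. Finally, $M(0) = 1 \nequiv 0 \mod 169$, which completes the proof.

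The main obstacle is purely computational: the automaton for $M(n) \bmod 13^2$ will be considerably larger than the $144$-state automaton for $M(n) \bmod 5^2$, since the degree bound in Remark~\ref{first estimate} grows with $p$, and one must carry out the polynomial arithmetic of Algorithm~\ref{fast diagonal algorithm} over $\Z/169\Z$ on polynomials of the corresponding degree. There is also a mild logical caveat: the statement "$0$ is a forbidden residue" depends on the precise structure of the computed machine — specifically on the claim that no $0$-output state is reachable by a word with nonzero leading digit — so the argument is only complete once that finite check has been carried out on the explicit automaton. No analytic difficulty is anticipated; the entire content lies in verifying that the algorithm's output has the stated shape.
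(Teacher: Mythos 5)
Your proposal is correct and follows exactly the strategy the paper uses: the paper proves this theorem implicitly, noting only that Algorithm~\ref{fast diagonal algorithm} yields a $2125$-state automaton for $M(n) \bmod 13^2$, with the forbidden-residue argument left to be carried out precisely as in the explicit proof of Theorem~\ref{Motzkin mod 8} (same shifted polynomial $P(x,y) = x^2 y^2 + (x+1)(2x-1)y + x(x+1)$, same Furstenberg embedding, same inspection of the $0$-output states). The only cosmetic difference is that the paper ran plain Algorithm~\ref{fast diagonal algorithm} rather than the higher-degree variant for this modulus; your caveat that the reachability check must actually be carried out on the explicit automaton is precisely what the computation reported in the paper provides.
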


Algorithm~\ref{fast diagonal algorithm} produces an automaton for $M(n) \bmod 13^2$ with $2125$ states.
The computation took ten minutes on a $2.6$ GHz laptop with $8$ GB RAM.

The following conjecture is suggested by experimental evidence, but we have not been able to compute the automata for these moduli.

\begin{conjecture}
Let $p \in \{31, 37, 61\}$.
For all $n \geq 0$, $M(n) \nequiv 0 \mod p^2$.
\end{conjecture}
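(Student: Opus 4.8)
The plan is to follow exactly the template used above for $M(n) \bmod 8$, $M(n) \bmod 5^2$, and $M(n) \bmod 13^2$. As already noted, the series $0 + \sum_{n \geq 1} M(n) x^n$ is a root of $P(x, y) \colonequal x^2 y^2 + (x+1)(2x-1) y + x(x+1)$, and $\frac{\partial P}{\partial y}(0, 0) = -1 \nequiv 0 \mod p$ for every prime $p$, so Equation~\eqref{partial derivative} is satisfied. Hence Proposition~\ref{Furstenberg} expresses $\sum_{n \geq 1} M(n) x^n$ as an explicit diagonal of a bivariate rational expression, and for each $p \in \{31, 37, 61\}$ one would run Algorithm~\ref{fast diagonal algorithm}, or the degree-reducing $\mu_{d,d}$ variant discussed at the end of Section~\ref{easy_algorithm}, with $\alpha = 2$ to produce a finite automaton computing $M(n) \bmod p^2$ for $n \geq 1$. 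Given the automaton, the argument finishes just as for Theorem~\ref{Motzkin mod 8}: one checks that every state whose output is $0$ has all of its incoming edges labeled $0$, so that no such state is terminal on the base-$p$ representation of any $n \geq 1$ (whose leading digit is nonzero), and one checks separately that $M(0) = 1 \nequiv 0 \mod p^2$.

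The obstruction — the reason this is stated as a conjecture rather than a theorem — is purely computational: the size of the automaton grows rapidly with $p$. The automaton for $M(n) \bmod 5^2$ has $144$ states and the one for $M(n) \bmod 13^2$ has $2125$ states, and extrapolation suggests that $p = 31, 37, 61$ will require automata with tens of thousands of states or more, each state being a bivariate polynomial over $\Z/p^2\Z$ whose degree after multiplication by $Q(x,y)^{p^2 - p}$ is on the order of $p$ in each variable. Both the memory needed to store the state set and the time for the repeated polynomial multiplications become prohibitive, so I expect the main obstacle to be engineering — sparser polynomial representations, computing the images under all $\Lambda_{d,d}$ in a single pass, and the like — rather than new mathematics.

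An alternative that would sidestep the explicit computation for each $p$ is a uniform argument. Since $M(n) \equiv 0 \mod p^2$ forces $M(n) \equiv 0 \mod p$, and $M(n) \bmod p$ is governed by a comparatively small automaton, one could try to isolate the sub-automaton of states divisible by $p$ and show that on that sub-automaton the ``output divided by $p$'' is itself computed by an automaton that never outputs $0$ on a string with nonzero leading digit. If the structure of $M(n)$ modulo $p$ for these particular primes — for instance a Lucas-type product of the kind established in Section~\ref{Lucas} — could be shown to persist in a controlled way modulo $p^2$, this would yield a proof uniform in $p$. I expect this second route, not the brute-force computation, to be where the genuine mathematical difficulty lies.
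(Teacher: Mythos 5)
The statement you were asked to prove is stated in the paper as a \emph{conjecture}, and the authors say explicitly that it ``is suggested by experimental evidence, but we have not been able to compute the automata for these moduli''; so there is no proof in the paper to compare against. You correctly diagnose this: the intended route is exactly the one you describe (Proposition~\ref{Furstenberg} plus Algorithm~\ref{fast diagonal algorithm} or its variant, then inspection of the automaton as in the proof of Theorem~\ref{Motzkin mod 8}), and the obstruction is purely computational --- the automata for $p^2 \in \{31^2, 37^2, 61^2\}$ are too large to build, given that already $13^2$ requires $2125$ states and ten minutes.

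One small slip: you call the $\mu_{d,d}$ variant at the end of Section~\ref{easy_algorithm} ``degree-reducing,'' but the paper describes it as the \emph{higher}-degree variant --- its states have higher degree (since the denominator exponent is $p^\alpha$ rather than $p^{\alpha-1}$), and its advantage is that it sometimes yields \emph{fewer states}, not smaller polynomials. This is the trade-off the authors exploit for $M(n) \bmod 8$, where the variant gives $28$ states versus $51$. Your second, speculative paragraph --- seeking a uniform-in-$p$ argument by lifting structure modulo $p$ to modulo $p^2$ --- goes beyond anything in the paper and is a sensible direction, but as you say it is not worked out; as written, it does not constitute a proof, and you are right to leave the statement as a conjecture.
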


\begin{question}
Are there infinitely many primes $p$ such that $M(n) \nequiv 0 \mod p^2$ for all $n \geq 0$?
\end{question}

\subsection{Further applications}

Before considering other combinatorial sequences, we pause here to mention additional information that an automaton for $a_n \bmod p^\alpha$ provides access to.

First, one can often compute the distribution of the residues modulo $p^\alpha$ by computing the letter frequencies of $(a_n \bmod p^\alpha)_{n \geq 0}$.
A $p$-automatic sequence is the image, under a coding, of a fixed point of a $p$-uniform morphism $\varphi$.
If $\varphi$ is primitive, then the letter frequencies exist and are nonzero rational numbers~\cite[Theorems~8.4.5 and 8.4.7]{ash}.
Peter~\cite{Peter} gave necessary and sufficient conditions for the letter frequencies to exist in a general automatic sequence.
If the letter frequencies of the fixed point of $\varphi$ exist, then the vector of letter frequencies is an eigenvector of the incidence matrix of $\varphi$.

\begin{example}
The sequence $abccabab\cdots$ is a fixed point of the primitive morphism $\varphi(a) = ab$, $\varphi(b) = cc$, $\varphi(c) = ab$.
The image of this sequence under $a,b \mapsto 1$ and $c \mapsto 0$ is the sequence of Motzkin numbers modulo $2$~\seq{A039963}.
The incidence matrix of $\varphi$ is
\[
	A = \begin{bmatrix}
		1 & 0 & 1 \\
		1 & 0 & 1 \\
		0 & 2 & 0
	\end{bmatrix}.
\]
The Perron--Frobenius eigenvalue of $A$ is $2$, and $(1, 1, 1)/3$ is the corresponding eigenvector normalized so that the entries sum to $1$.
Hence the letters $a, b, c$ occur with equal frequency in the fixed point $abccabab\cdots$.
Therefore, in the sequence $(M(n) \bmod 2)_{n \geq 0}$ the letter $0$ occurs with frequency $1/3$ and the letter $1$ with frequency $2/3$.
\end{example}

Second, if the terms in $(a_n)_{n \geq 0}$ are not divisible by arbitrarily large powers of $p$ and $(a_n \bmod p^\alpha)_{n \geq 0}$ is $p$-automatic for some sufficiently large $\alpha$, then the sequence of $p$-adic valuations of $a_n$ is also $p$-automatic.
Let $\nu_p(n)$ be the exponent of the highest power of $p$ dividing $n$, with $\nu_p(0) = \infty$.

\begin{theorem}
If $(a_n)_{n \geq 0}$ is the sequence of coefficients of the diagonal of a rational expression $\frac{R(x_1, \dots, x_k)}{Q(x_1, \dots, x_k)}$ with $Q(0, \dots, 0) \nequiv 0 \mod p$ such that $\nu_p(a_n)_{n \geq 0}$ contains only finitely many distinct values, then $\nu_p(a_n)_{n \geq 0}$ is $p$-automatic.
\end{theorem}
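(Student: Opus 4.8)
The plan is to reduce the statement to Theorem~\ref{diagonal closure} together with the elementary fact that the class of $p$-automatic sequences is closed under codings (termwise letter-to-letter maps). The key observation is that if $\nu_p(a_n)$ takes only finitely many finite values, then $\nu_p(a_n)$ is completely determined by the residue $a_n \bmod p^\alpha$ for a suitably chosen fixed $\alpha$.

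First I would dispose of the degenerate case in which $a_n = 0$ for every $n$: then $\nu_p(a_n)_{n \geq 0}$ is the constant sequence $(\infty, \infty, \dots)$, which is trivially $p$-automatic. Otherwise, let $\alpha = 1 + \max\{\nu_p(a_n) : n \geq 0 \text{ and } a_n \neq 0\}$; this is a well-defined positive integer precisely because $\nu_p(a_n)_{n \geq 0}$ assumes only finitely many distinct values. By Theorem~\ref{diagonal closure}, the sequence $(a_n \bmod p^\alpha)_{n \geq 0}$ is $p$-automatic; fix a $p$-DFAO $(\mathcal{S}, \Sigma_p, \delta, s_1, \Z/p^\alpha\Z, \omega)$ that computes it.

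Next I would introduce the coding $g \colon \Z/p^\alpha\Z \to \{0, 1, \dots, \alpha-1\} \cup \{\infty\}$ defined by $g(0) = \infty$ and $g(c) = \nu_p(c)$ for $c \neq 0$. This is well-defined, because a nonzero residue $c$ modulo $p^\alpha$ is not divisible by $p^\alpha$, so $\nu_p$ of any integer representative of $c$ lies in $\{0, 1, \dots, \alpha-1\}$ and is independent of the chosen representative. I then claim $\nu_p(a_n) = g(a_n \bmod p^\alpha)$ for every $n$: if $a_n = 0$ both sides equal $\infty$, and if $a_n \neq 0$ then $\nu_p(a_n) \leq \alpha - 1$ by the choice of $\alpha$, whence $a_n \bmod p^\alpha \neq 0$ and $g(a_n \bmod p^\alpha) = \nu_p(a_n \bmod p^\alpha) = \nu_p(a_n)$. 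Consequently the $p$-DFAO $(\mathcal{S}, \Sigma_p, \delta, s_1, \{0, \dots, \alpha-1, \infty\}, g \circ \omega)$ computes $\nu_p(a_n)_{n \geq 0}$, so this sequence is $p$-automatic.

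I do not anticipate a genuine obstacle; the one point requiring care is that $\nu_p$ descends to a well-defined function on the nonzero residues modulo $p^\alpha$, which is exactly where the boundedness hypothesis on the valuations is used. Everything else is a routine application of Theorem~\ref{diagonal closure} and of the closure of $p$-automaticity under post-composition of the output function with a fixed map. Alternatively, the last step can be phrased via Theorem~\ref{Eilenberg}: since $\nu_p(a_n)_{n \geq 0}$ is obtained from $(a_n \bmod p^\alpha)_{n \geq 0}$ by applying $g$ termwise, its $p$-kernel is the image under $g$ of the $p$-kernel of $(a_n \bmod p^\alpha)_{n \geq 0}$, hence finite.
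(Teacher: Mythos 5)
Your proof is correct and follows essentially the same route as the paper: choose $\alpha$ exceeding every finite $\nu_p(a_n)$, obtain an automaton for $a_n \bmod p^\alpha$ via Theorem~\ref{diagonal closure}, and post-compose the output function with $r \mapsto \nu_p(r)$ (your $g$). The paper dispenses with your separate degenerate case by using the convention $\nu_p(0) = \infty$ uniformly, but the argument is otherwise the same.
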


\begin{proof}
Let $\alpha$ be an integer such that $\alpha > \nu_p(a_n)$ for all $n \geq 0$ satisfying $a_n \neq 0$.
Let $\mathcal{M}$ be an automaton computing $a_n \bmod p^\alpha$.
Apply the map $r \mapsto \nu_p(r)$ to the output label of each state in $\mathcal{M}$ to obtain a new automaton on the same graph.
The new automaton computes $\nu_p(a_n)$.
\end{proof}

\begin{example}
By Theorem~\ref{Motzkin mod 8}, the sequence $\nu_2(M(n))_{n \geq 0}$ is bounded.
It follows that $\nu_2(M(n))_{n \geq 0}$~\seq{A186034} is $2$-automatic.
Relabeling an automaton for $M(n) \bmod 4$ gives the automaton in Figure~\ref{Motzkin 2-adic valuation}.
One can even compute the letter frequencies of this sequence.
We already know that the limiting density of odd Motzkin numbers is $2/3$.
The limiting density of Motzkin numbers congruent to $2$ modulo $4$ is $1/6$, and the limiting density of Motzkin numbers congruent to $0$ modulo $4$ is $1/6$.
\end{example}

\begin{figure}
	\begin{center}
		\includegraphics{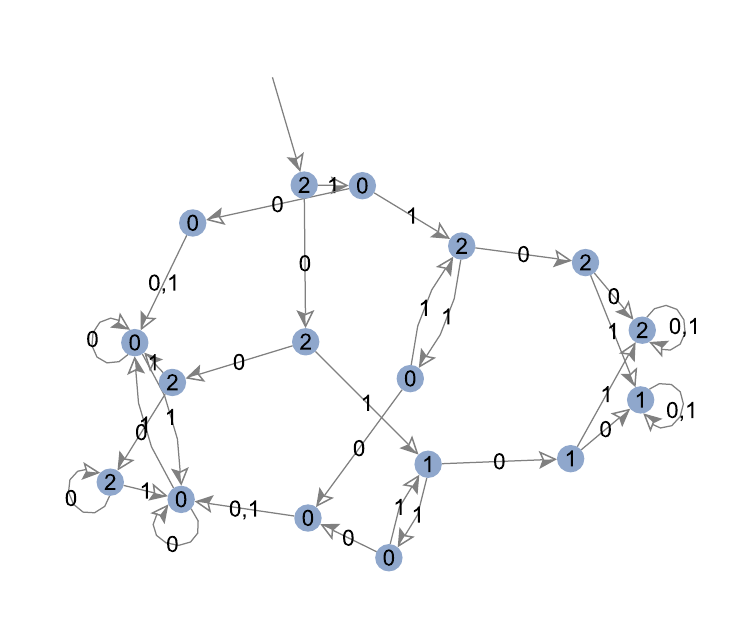}
		\caption{An automaton that computes $2$-adic valuations of Motzkin numbers.}
		\label{Motzkin 2-adic valuation}
	\end{center}
\end{figure}

If $\nu_p(a_n)$ takes on infinitely many distinct values, then $\nu_p(a_n)_{n \geq 0}$ cannot be $p$-automatic since its alphabet is not finite.
In this case, one would like to know whether $\nu_p(a_n)_{n \geq 0}$ is $p$-regular in the sense of Allouche and Shallit~\cite{Allouche--Shallit}.

\begin{question}
Is the sequence $\nu_3(M(n))_{n \geq 0}$ unbounded?
If so, is it $3$-regular?
\end{question}

\subsection{Well-known combinatorial sequences}

Of the many algebraic sequences that arise in combinatorics, we select just a few more for consideration.

The generating function of the sequence $R(n)_{n \geq 0} = 1, 0, 1, 1, 3, 6, 15, 36, \dots$ of Riordan numbers~\seq{A005043} satisfies
\[
	x (x + 1) z^2 - (x + 1) z + 1 = 0.
\]
Deutsch and Sagan~\cite[Corollaries~3.3 and 4.12]{Deutsch--Sagan} determined the value of $R(n)$ modulo $2$ and modulo $3$.
In particular, we have the following.

\begin{theorem}[Deutsch--Sagan]
For all $n \geq 0$, $R(n) \nequiv 2 \mod 3$.
\end{theorem}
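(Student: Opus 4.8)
The plan is to follow verbatim the route used above for the Catalan and Motzkin numbers: reduce to a diagonal of a rational expression via Proposition~\ref{Furstenberg}, build the mod-$3$ automaton with Algorithm~\ref{fast diagonal algorithm}, and read off the result. Since $R(0) = 1 \neq 0$, the generating function $z = \sum_{n \geq 0} R(n) x^n$ fails the hypothesis of Proposition~\ref{Furstenberg}, so first I would modify the initial term and work with $y = z - 1 = 0 + \sum_{n \geq 1} R(n) x^n$. Substituting $z = y + 1$ into $x(x+1) z^2 - (x+1) z + 1 = 0$ and simplifying gives the relation
\[
	P(x, y) \colonequal x(x+1) y^2 + (x+1)(2x-1) y + x^2 = 0 .
\]
Next I would check that $\tfrac{\partial P}{\partial y}(0,0) = -1 \nequiv 0 \mod 3$, so that Equation~\eqref{partial derivative} holds for $p = 3$ and the reduction applies.

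Then, by Proposition~\ref{Furstenberg}, $\sum_{n \geq 1} R(n) x^n$ is the diagonal of $\bigl(y^2 \tfrac{\partial P}{\partial y}(xy, y)\bigr) / P(xy, y)$. After factoring $y$ out of $P(xy, y)$ — legitimate since $P(0,0) = 0$ — the denominator $P(xy,y)/y$ has constant term $-1$, which is invertible modulo $3$; multiplying numerator and denominator by $c = \bigl(\tfrac{\partial P}{\partial y}(0,0)\bigr)^{-1} \equiv 2 \mod 3$ puts the expression into the form required by Algorithm~\ref{fast diagonal algorithm}. Running that algorithm with $p = 3$, $\alpha = 1$ on this input produces a finite automaton that computes $R(n) \bmod 3$ for $n \geq 1$ (the output on input $0$ is the constant term of the modified series, namely $0$, rather than $R(0)$).

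The last step is a finite inspection of the automaton: one verifies that none of its states carries output $2$. Since the base-$3$ representation of every $n \geq 1$ has nonzero most significant digit, every such $n$ is read into a state reachable from the initial state, and as none of those states outputs $2$, we get $R(n) \nequiv 2 \mod 3$ for all $n \geq 1$. The case $n = 0$ is handled directly, since $R(0) = 1 \nequiv 2 \mod 3$. I do not expect a genuine obstacle: the entire argument is the routine bookkeeping of the Furstenberg substitution together with the machine-performed, hence terminating and deterministic, check that the automaton omits the residue $2$ — exactly as in the proof of Theorem~\ref{Catalan mod 4}. The only point needing a moment's care is confirming that the transformed denominator indeed has constant term invertible modulo $3$, which is immediate from the computation above.
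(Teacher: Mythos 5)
Your proposal is correct and follows exactly the paper's methodology: shift the generating function so its constant term vanishes (via $z = y+1$), verify that $\partial P / \partial y (0,0)$ is a unit modulo $p$, apply Proposition~\ref{Furstenberg} to express the series as a diagonal of a rational function, run Algorithm~\ref{fast diagonal algorithm} to build the finite automaton, and inspect its outputs. Your algebra is right ($P(x,y) = x(x+1)y^2 + (x+1)(2x-1)y + x^2$, with $\partial P/\partial y(0,0) = -1 \equiv 2 \bmod 3$), and the final inspection plus the separate check for $n = 0$ match the paper's treatment of the analogous Catalan theorems.
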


Computing an automaton modulo $32$ shows the following.

\begin{theorem}
For all $n \geq 0$, $R(n) \nequiv 16 \mod 32$.
\end{theorem}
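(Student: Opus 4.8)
The plan is to follow the mechanical procedure of Section~\ref{easy_algorithm}: produce a finite automaton that computes $R(n) \bmod 32$, and then observe that the residue $16$ never occurs among its output labels.

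First I would set $z = \sum_{n \geq 0} R(n) x^n$. Since $R(0) = 1 \neq 0$, this series does not satisfy the hypotheses of Proposition~\ref{Furstenberg}, so I would pass to $y = z - 1 = \sum_{n \geq 1} R(n) x^n$, which has zero constant term and satisfies $x(x+1)(y+1)^2 - (x+1)(y+1) + 1 = 0$. Expanding and collecting powers of $y$, this is
\[
	P(x, y) \colonequal x(x+1) y^2 + (x+1)(2x-1) y + x^2 = 0.
\]
One checks that $\frac{\partial P}{\partial y}(0,0) = -1 \nequiv 0 \bmod 2$, so Equation~\eqref{partial derivative} is satisfied. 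By Proposition~\ref{Furstenberg}, $\sum_{n \geq 1} R(n) x^n$ is the diagonal of $\frac{y^2 \frac{\partial P}{\partial y}(xy, y)}{P(xy, y)}$. Since $P(0,0) = 0$, a single factor of $y$ may be cancelled from $P(xy, y)$, leaving a denominator with nonzero constant term; normalizing that constant term to $1$ puts the expression in the form $R(x,y)/Q(x,y)$ with $Q(0,0) = 1$ required by Algorithm~\ref{fast diagonal algorithm}.

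Next I would run Algorithm~\ref{fast diagonal algorithm} on this input with $p = 2$ and $\alpha = 5$. Theorem~\ref{diagonal closure} guarantees termination, and the output is an automaton whose states are polynomials in $\mathcal{A}[x,y]$ with $\mathcal{A} = \Z/32\Z$, the output at a state $s(x,y)$ being $s(0,0)$. As in the Catalan and Motzkin cases above, this automaton computes $R(n) \bmod 32$ only for $n \geq 1$ and outputs $0$ at $n = 0$.

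Finally I would inspect the finite set of output labels of the resulting automaton. If $16$ does not occur among them, then $R(n) \nequiv 16 \bmod 32$ for all $n \geq 1$; combined with $R(0) = 1 \nequiv 16 \bmod 32$, this gives the claim for all $n \geq 0$. The only real difficulty is one of bookkeeping rather than of mathematics: the automaton modulo $32$ may have many states, so rather than verifying the absence of the output $16$ by hand I would rely on the explicit machine computation, the automaton being available from the first author's web site, exactly as for the analogous congruences stated earlier in this section.
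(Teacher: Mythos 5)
Your proposal is correct and follows exactly the paper's method: shift to $y = z - 1$, verify the derivative condition $\frac{\partial P}{\partial y}(0,0) = -1 \nequiv 0 \bmod 2$, obtain a diagonal representation via Proposition~\ref{Furstenberg}, run Algorithm~\ref{fast diagonal algorithm} with $p = 2$ and $\alpha = 5$, and inspect the resulting automaton, handling $n=0$ separately. The paper's own proof is just the sentence ``Computing an automaton modulo $32$ shows the following,'' so your account spells out the very same steps in more detail.
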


Let $P(n)_{n \geq 0}$ be the sequence $1, 1, 2, 5, 13, 35, 96, 267, \dots$ whose $n$th term is the number of directed animals of size $n$~\seq{A005773}.
Its generating function satisfies
\[
	(3 x - 1) z^2 - (3 x - 1) z + x = 0.
\]
Deutsch and Sagan~\cite[Corollaries~3.2 and 4.11]{Deutsch--Sagan} also determined the value of $P(n)$ modulo $2$ and $3$.
There are no forbidden residues modulo $2$ or $3$, but this sequence has the same forbidden residue modulo $32$ as the Riordan numbers.

\begin{theorem}
For all $n \geq 0$, $P(n) \nequiv 16 \mod 32$.
\end{theorem}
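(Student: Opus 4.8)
The plan is to mirror exactly the strategy used for the Riordan numbers and the Motzkin numbers above: produce an algebraic equation for (a modification of) the generating function, feed it through Proposition~\ref{Furstenberg} to realize the series as the diagonal of a bivariate rational expression, run Algorithm~\ref{fast diagonal algorithm} with $p^\alpha = 32$, and then inspect the resulting finite automaton to verify that no state carries the output $16$. Concretely, the generating function $z = \sum_{n \geq 0} P(n) x^n$ satisfies $(3x-1)z^2 - (3x-1)z + x = 0$. Since $P(0) = 1 \neq 0$, this series does not meet the hypothesis $a_0 = 0$ of Proposition~\ref{Furstenberg}, so I would first pass to $y = 0 + \sum_{n \geq 1} P(n) x^n = z - 1$. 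Substituting $z = y+1$ gives a polynomial $P(x,y)$ with $P(0,0) = 0$; one checks that $\frac{\partial P}{\partial y}(0,0) \nequiv 0 \bmod 2$, so Equation~\eqref{partial derivative} holds and the algorithm applies directly (no need for the higher-degree variant used for Motzkin numbers).

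The main steps, in order, are: (1) form $P(x,y)$ from the substitution $z \mapsto y+1$ and record $c = \big(\frac{\partial P}{\partial y}(0,0)\big)^{-1} \bmod 32$; (2) assemble the input $\big(c\,y\,\frac{\partial P}{\partial y}(xy,y),\ c\,P(xy,y)/y,\ 2,\ 5\big)$ for Algorithm~\ref{fast diagonal algorithm}, exactly as prescribed in Section~\ref{easy_algorithm}; (3) run the algorithm to obtain a finite automaton computing $P(n) \bmod 32$ for all $n \geq 1$; (4) scan the output function $\omega$ of every state and confirm that $16$ is not among the values; (5) check the single remaining case $n = 0$ by hand, where $P(0) = 1 \nequiv 16 \bmod 32$. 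Together with the observation (used repeatedly above) that every $n \geq 1$ has binary representation with leading digit $1$, so that the ``spurious'' output at the empty word is irrelevant, this completes the proof. By Theorem~\ref{diagonal closure} the automaton is guaranteed to be finite, so the computation terminates.

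The genuine content is entirely in step (4): one must actually carry out the computation and certify that the residue $16$ is absent from the (finite but not tiny) state set. This is the same kind of verification performed for the Riordan numbers, and indeed the statement asserts that the two sequences share this forbidden residue — which is unsurprising given the structural similarity of their defining quadratics, both having the factor $(3x-1)$ or $x(x+1)$ as leading and linear coefficients and yielding closely related diagonal representations. I do not expect any conceptual obstacle; the only ``hard part'' is that the correctness of the conclusion rests on an explicit machine computation, whose output automaton is recorded on the first author's web site rather than reproduced here. Should one wish for a human-checkable certificate, one could additionally exhibit the $2$-kernel relations among the finitely many states and verify closure by hand, but for a forbidden-residue statement the automaton inspection suffices.
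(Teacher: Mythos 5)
Your proposal is correct and mirrors exactly the paper's implicit method: the paper states this theorem (like the Riordan-number theorem just before it) as a direct output of the Algorithm~\ref{fast diagonal algorithm} computation, with no further proof text. Your substitution $z = y+1$ yields $P(x,y) = (3x-1)y^2 + (3x-1)y + x$ with $\frac{\partial P}{\partial y}(0,0) = -1 \nequiv 0 \bmod 2$, your input to the algorithm is assembled correctly, and your handling of the $n=0$ case and the leading-digit-$1$ observation are the same conventions the paper uses throughout Section~\ref{Congruences}.
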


Finally, let $H(n)_{n \geq 0}$ be the sequence $1, 1, 3, 10, 36, 137, 543, 2219, \dots$ whose $n$th term is the number of restricted hexagonal polyominoes of size $n$~\seq{A002212}.
The generating function satisfies
\[
	x z^2 + (x - 1) z - x + 1 = 0.
\]
Again, Deutsch and Sagan~\cite[Corollary~3.4 and Theorem~5.3]{Deutsch--Sagan} determined the value of $H(n)$ modulo $2$ and $3$.
We can prove the following.

\begin{theorem}
For all $n \geq 0$, $H(n) \nequiv 0 \mod 8$.
\end{theorem}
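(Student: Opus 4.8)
The plan is to follow the recipe already used for the Catalan and Motzkin numbers: reduce $\sum_{n \geq 1} H(n) x^n$ to the diagonal of a rational expression via Proposition~\ref{Furstenberg}, run Algorithm~\ref{fast diagonal algorithm} with $p^\alpha = 8$, and then inspect the resulting automaton. Since $H(0) = 1 \neq 0$, the series $\sum_{n \geq 0} H(n) x^n$ does not satisfy the hypotheses of Proposition~\ref{Furstenberg}, so I would first pass to $y = \sum_{n \geq 1} H(n) x^n = z - 1$. Substituting $z = y + 1$ into $x z^2 + (x-1) z - x + 1 = 0$ and expanding gives
\[
	P(x, y) \colonequal x y^2 + (3x - 1) y + x = 0,
\]
with $P(0,0) = 0$ and $\frac{\partial P}{\partial y}(0,0) = -1 \nequiv 0 \mod 2$, so Equation~\eqref{partial derivative} holds with $p = 2$.

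Next, by Proposition~\ref{Furstenberg}, $\sum_{n \geq 1} H(n) x^n$ is the diagonal of $\frac{y^2 \frac{\partial P}{\partial y}(x y, y)}{P(x y, y)}$; after factoring $y$ out of $P(x y, y)$ and multiplying numerator and denominator by $\left(\frac{\partial P}{\partial y}(0,0)\right)^{-1} = -1$ so that the denominator has constant term $1$, I would reduce the coefficients modulo $8$ and feed the resulting pair of polynomials, together with $p = 2$ and $\alpha = 3$, to Algorithm~\ref{fast diagonal algorithm}. Theorem~\ref{diagonal closure} guarantees that the process terminates with a finite automaton that computes $H(n) \bmod 8$ for all $n \geq 1$ (at $n = 0$ the automaton outputs the constant term of the modified series, namely $0$, rather than $H(0) \bmod 8$).

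Finally, I would read off the claim from the automaton exactly as in the proof of Theorem~\ref{Motzkin mod 8}: the states whose output label is $0$ should all have incoming edges labeled only by $0$, hence none of them is the terminal state on input $n \geq 1$, whose binary representation has most significant digit $1$; and $n = 0$ is handled separately since $H(0) = 1 \nequiv 0 \mod 8$. The only real work is the finite — but not tiny — computation of the automaton and the verification of the incoming-edge condition on its zero-output states; if Algorithm~\ref{fast diagonal algorithm} produces an inconveniently large automaton, the higher-degree variant described at the end of Section~\ref{easy_algorithm} can be used instead to obtain a smaller one, as was done for the Motzkin numbers. The main obstacle is therefore purely computational: there is no conceptual difficulty beyond confirming that every state outputting $0$ is reached only along edges labeled $0$, so I expect the argument to be short once the automaton is in hand.
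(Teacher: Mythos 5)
Your proposal is correct and matches the paper's approach exactly: the paper states this theorem in the same list of results produced mechanically by Algorithm~\ref{fast diagonal algorithm} (after the standard substitution $z = y+1$, which you carry out correctly to get $P(x,y) = x y^2 + (3x-1)y + x$), with the automaton and the computational verification relegated to the accompanying software rather than reproduced in the text. The one thing you leave open — that the zero-output states of the computed automaton really are reached only along $0$-labeled edges — is the same verification the paper delegates to the machine, so the two arguments have identical content.
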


\subsection{Sequences arising in pattern avoidance}

Pattern avoidance is a highly active area of study in combinatorics, and many combinatorial objects have been considered from this perspective.
Here we examine five sequences whose entries count trees or permutations avoiding a set of patterns.

If $a_n$ is the number of $(n+1)$-leaf binary trees avoiding a given finite set of contiguous patterns, then $(a_n)_{n \geq 0}$ is algebraic~\cite{Rowland_binary_trees}.
Two such sequences that exhibit forbidden residues modulo powers of $2$ are the following.

\begin{example}
Let $a_n$ be the number of $(n+1)$-leaf binary trees avoiding the following subtree.
\[
	\vcentergraphics{binarytree6-22}
\]
The sequence is $1, 1, 2, 5, 14, 41, 124, 384, \dots$~\seq{A159769}, and the generating function for this sequence satisfies
\[
	(x - 2) x^2 z^2 + (2 x^2 - 2 x + 1) z + x - 1 = 0.
\]
For all $n \geq 0$,
\begin{align*}
	a_n &\nequiv 3\phantom{0, 00, 00, 0}	\mod \phantom{0}4, \\
	a_n &\nequiv 25, 29\phantom{, 0, 00}	\mod 32, \\
	a_n &\nequiv 9, 13, 22, 37		\mod 64.
\end{align*}
\end{example}

\begin{example}
Let $a_n$ be the number of $(n+1)$-leaf binary trees avoiding the following subtree.
\[
	\vcentergraphics{binarytree6-7}
\]
The sequence is $1, 1, 2, 5, 14, 41, 124, 385, \dots$~\seq{A159771}, and the generating function satisfies
\[
	2 x^2 z^2 - (3 x^2 - 2 x + 1) z + x^2 - x + 1 = 0.
\]
For all $n \geq 0$,
\begin{align*}
	a_n &\nequiv \phantom{0}3	\mod \phantom{0}4, \\
	a_n &\nequiv 13			\mod 16, \\
	a_n &\nequiv 21			\mod 32, \\
	a_n &\nequiv 37			\mod 64.
\end{align*}
\end{example}

Permutation patterns have received a huge amount of attention.
In general, sequences that count the permutations on $n$ elements avoiding a finite set of patterns are not algebraic.
However, some sets of patterns do yield algebraic sequences.

\begin{example}
Let $a_n$ be the number of permutations of length $n$ avoiding $3412$ and $2143$~\cite{Atkinson}.
The sequence is $1, 1, 2, 6, 22, 86, 340, 1340, \dots$~\seq{A029759}, and the generating function satisfies
\[
	(4 x - 1) (2 x - 1)^2 z^2 + (3 x - 1)^2 = 0.
\]
The coefficient of $x^0 z^1$ is $0$, so we might substitute $z = 1 + x y$;
however, the coefficient of $x^0 y^1$ is then $2$, which is not an obstruction for $p \neq 2$ but is an obstruction for $p = 2$.
Instead we use the fact that $a_n$ is even for all $n \geq 2$ and substitute $z = 
1 + x + 2 x y$.
Dividing the equation by $4 x$ then yields
\[
	x (4 x - 1) (2 x - 1)^2 y^2 + (x + 1) (4 x - 1) (2 x - 1)^2 y + x (4 x^3 + 3 x^2 - 4 x + 1) = 0,
\]
in which the coefficient of $x^0 y^1$ is $-1 \nequiv 0 \mod 2$.
Having divided the sequence by $2$, we need to multiply each output label by $2$ in any automaton modulo $2^\alpha$ we compute from this equation to recover an automaton for $a_n \bmod 2^{\alpha + 1}$.
For all $n \geq 0$,
\begin{align*}
	a_n &\nequiv 10, 14 &\mod \phantom{0}16, \\
	a_n &\nequiv 18 &\mod \phantom{0}32, \\
	a_n &\nequiv 34, 54 &\mod \phantom{0}64, \\
	a_n &\nequiv 44, 66, 102 &\mod 128, \\
	a_n &\nequiv 20, 130, 150, 166, 188, 204, 212, 214, 220, 236, 252 &\mod 256.
\end{align*}
\end{example}

\begin{example}
Let $a_n$ be the number of permutations of length $n$ avoiding $2143$ and $1324$~
\cite{Bona}.
The sequence is $1, 1, 2, 6, 22, 88, 366, 1552, \dots$~\seq{A032351}, and the generating function satisfies
\[
	(4 x^3 - 8 x^2 + 6 x - 1) z^2 + 2 (3 x^2 - 5 x + 1) z + x^2 + 4 x - 1 = 0.
\]
Again $a_n$ is even for $n \geq 2$.
The substitution $z = 1 + x + 2 x^2 + 2 x^2 y$ gives
\[
	(4 x^3 - 8 x^2 + 6 x - 1) y^2 + (8 x^3 - 12 x^2 + 8 x - 1) y + x (4 x^2 - 4 x + 3) = 0
\]
after dividing by $4 x^4$.
For all $n \geq 3$,
\begin{align*}
	a_n &\nequiv 2 &\mod \phantom{00}8, \\
	a_n &\nequiv 30 &\mod \phantom{0}32, \\
	a_n &\nequiv 14, 44, 54 &\mod \phantom{0}64, \\
	a_n &\nequiv 38, 46, 60, 76, 86 &\mod 128, \\
	a_n &\nequiv 92, 124, 140, 230, 238 &\mod 256, \\
	a_n &\nequiv 4, 12, 20, 110, 148, 150, 262, 278, 324, \\
		&\phantom{{} \nequiv {}} \quad 358, 372, 390, 412, 436, 454, 456, 476 &\mod 512.
\end{align*}
Note that the term $a_2 = 2$ was chopped in the substitution, so $a_n \nequiv 2 \mod 8$ holds only for $n \geq 3$.
\end{example}

\begin{example}\label{1342 and 2143}
Let $a_n$ be the number of permutations of length $n$ avoiding $1342$ and $2143$~\cite{Le}.
The sequence $(a_n)_{n \geq 0}$ is $1, 1, 2, 6, 22, 88, 368, 1584, \dots$~\seq{A109033}.
The generating function satisfies
\[
	2 x (x - 1) z^2 + z + x - 1 = 0.
\]
The automata for this sequence seem to have relatively few states, so they are fast to compute.
For all $n \geq 0$,
\begin{align*}
	a_n &\nequiv 3 &\mod \phantom{000}4, \\
	a_n &\nequiv 4, 5 &\mod \phantom{000}8, \\
	a_n &\nequiv 9, 10, 14 &\mod \phantom{00}16, \\
	a_n &\nequiv 8, 17, 18 &\mod \phantom{00}32, \\
	a_n &\nequiv 16, 33, 34, 38, 54 &\mod \phantom{00}64, \\
	a_n &\nequiv 24, 32, 65, 66, 70, 86, 120 &\mod \phantom{0}128, \\
	a_n &\nequiv 96, 129, 130, 134, 150, 176, 184, 240 &\mod \phantom{0}256, \\
	a_n &\nequiv 56, 112, 216, 224, 256, 257, 258, 262, 278, 304, 320, 448 &\mod \phantom{0}512, \\
	a_n &\nequiv 48, 192, 312, 513, 514, 518, 534, 600, 640, 856, 880, 984 &\mod 1024.
\end{align*}
\end{example}

\begin{question}
For the sequence $(a_n)_{n \geq 0}$ in Example~\ref{1342 and 2143}, is it true that
\[
	a_n \nequiv 2^{\alpha - 1} + 1, \, 2^{\alpha - 1} + 2, \, 2^{\alpha - 1} + 6, \, 2^{\alpha - 1} + 22 \mod 2^\alpha
\]
for all $\alpha \geq 6$ and $n \geq 0$?
\end{question}

\subsection{Ap\'ery numbers}\label{Apery}

The sequence of numbers $A(n) = \sum_{k=0}^n \binom{n}{k}^2 \binom{n+k}{k}^2$~\seq{A005259} arose in Ap\'ery's proof that $\zeta(3)$ is irrational.
Its generating function  $\sum_{n \geq 0} A(n) x^n$ is not algebraic, but it is the diagonal of the rational expression
\[
	\frac{1}{(1 - x_1 - x_2) (1 - x_3 - x_4) - x_1 x_2 x_3 x_4}
\]
in four variables~\cite{Straub}, so by Theorem~\ref{diagonal closure} the sequence $A(n) \bmod p^\alpha$ is $p$-automatic for every prime power $p^\alpha$.

Ap\'ery numbers modulo $p$ were studied by Gessel~\cite{Gessel}, who proved a Lucas-type theorem.
Namely, if $n = n_l \cdots n_1 n_0$ in base $p$, then
\[
	A(n) \equiv \prod_{i=0}^l A(n_i) \mod p.
\]
Therefore, one can easily verify that $A(n) \nequiv 0 \mod 2$ for all $n \geq 0$, since $A(0) \equiv A(1) \equiv 1 \mod 2$.
More generally, the primes $2, 3, 7, 13, 23, 29, 43, 47, \dots$~\seq{A133370} divide no Ap\'ery number.

\begin{question}
Are there infinitely many primes $p$ such that $A(n) \nequiv 0 \mod p$ for all $n \geq 0$?
\end{question}

One can generate an automaton for $A(n) \bmod p$ either using Gessel's theorem or Theorem~\ref{diagonal closure}, although Gessel's theorem accomplishes this more quickly since it specifies the structure directly.
We mention that $A(n) \bmod 7$ has a simple expression, which is evident from the automaton on the left in Figure~\ref{Apery 7 and 9}.

\begin{figure}
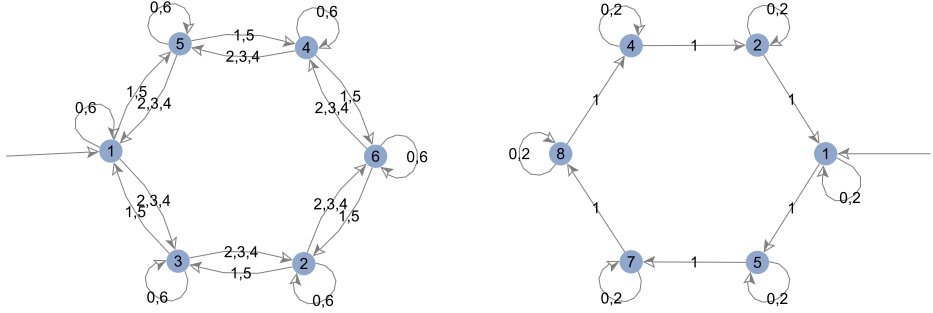

	\begin{center}
		\scalebox{.9}{\vcentergraphics{Apery7}}
		\scalebox{.9}{\vcentergraphics{Apery9}}
		\caption{Automata that compute Ap\'ery numbers modulo $7$ (left) and $9$ (right).}
		\label{Apery 7 and 9}
	\end{center}
\end{figure}

\begin{theorem}\label{Apery mod 7}
Let $e_d(n)$ be the number of occurrences of the digit $d$ in the standard base-$7$ representation of $n$.
For all $n \geq 0$,
\[
	A(n) \equiv 5^{e_1(n) + e_5(n) - e_2(n) - e_3(n) - e_4(n)} \mod 7.
\]
\end{theorem}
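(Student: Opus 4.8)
The plan is to deduce this from Gessel's congruence $A(n) \equiv \prod_{i=0}^{l} A(n_i) \bmod p$, specialized to $p = 7$, together with a finite check of the base cases. First I would record the values of the Ap\'ery numbers at single-digit arguments modulo $7$. A direct evaluation of $A(d) = \sum_{k=0}^{d} \binom{d}{k}^2 \binom{d+k}{k}^2$ for $d \in \{0,1,\dots,6\}$ gives
\[
	A(0) \equiv 1, \quad A(1) \equiv 5, \quad A(2) \equiv A(3) \equiv A(4) \equiv 3, \quad A(5) \equiv 5, \quad A(6) \equiv 1 \pmod 7.
\]
Since $5$ is a primitive root modulo $7$ with $5^{-1} \equiv 5^5 \equiv 3 \bmod 7$, these seven values can be written uniformly as $A(d) \equiv 5^{\varepsilon(d)} \bmod 7$, where
\[
	\varepsilon(d) = \begin{cases} 1 & d \in \{1, 5\}, \\ -1 & d \in \{2, 3, 4\}, \\ 0 & d \in \{0, 6\}. \end{cases}
\]

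The main step is then purely formal. Writing $n = n_l \cdots n_1 n_0$ in base $7$ and applying Gessel's theorem,
\[
	A(n) \equiv \prod_{i=0}^{l} A(n_i) \equiv \prod_{i=0}^{l} 5^{\varepsilon(n_i)} = 5^{\sum_{i=0}^{l} \varepsilon(n_i)} \pmod 7,
\]
and $\sum_{i=0}^{l} \varepsilon(n_i)$ counts each occurrence of the digit $1$ or $5$ with weight $+1$ and each occurrence of the digit $2$, $3$, or $4$ with weight $-1$, hence equals $e_1(n) + e_5(n) - e_2(n) - e_3(n) - e_4(n)$. This is precisely the asserted identity. I would add a remark clarifying that the exponent may be negative, in which case $5^{-m}$ is read as the $m$th power of $5^{-1} \equiv 3 \bmod 7$; equivalently, only the residue of the exponent modulo $\operatorname{ord}_7(5) = 6$ matters.

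The only genuine computation is the seven base-case evaluations, and it is routine, so I do not anticipate a real obstacle; the subtler points are merely bookkeeping — checking that digits $0$ and $6$ contribute trivially and that the stated weights match the computed residues. As a sanity check I would note that the right-hand side is always a power of a unit and therefore never $\equiv 0 \bmod 7$, consistent with the earlier remark that $7$ divides no Ap\'ery number. Finally, I would observe that the same reasoning lets one read the statement directly off the automaton on the left in Figure~\ref{Apery 7 and 9}: its states cycle through the powers of $5$ modulo $7$, with the digits $1$ and $5$ advancing the exponent by $1$, the digits $2$, $3$, $4$ decreasing it by $1$, and the digits $0$ and $6$ fixing it.
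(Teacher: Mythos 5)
Your proposal is correct, and it follows essentially the same route the paper takes: the paper observes that the automaton for $A(n) \bmod 7$ can be generated from Gessel's Lucas-type congruence $A(n) \equiv \prod_i A(n_i) \bmod p$ together with the single-digit values, and reads the closed form off that automaton, while you apply Gessel's congruence directly and note at the end that this reproduces the automaton's structure. The base-case residues you list are all correct, and your bookkeeping of the exponent (including the convention for negative exponents via $5^{-1} \equiv 3 \bmod 7$) is sound.
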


In addition, all edges labeled $0$ or $6$ in the automaton for $A(n) \bmod 7$ are loops, since $A(0) \equiv 1 \mod 7$ and $A(6) \equiv 1 \mod 7$.
That is, inserting or deleting any $0$s and $6$s in the base-$7$ representation of $n$ produces $n'$ such that $A(n) \equiv A(n') \mod 7$.

For $\alpha \geq 2$, the appropriate generalization of Algorithm~\ref{fast diagonal algorithm} allows us to resolve some conjectures by computing automata for $A(n) \bmod p^\alpha$.

Chowla, Cowles, and Cowles~\cite{Chowla--Cowles--Cowles} conjectured that
\[
	A(n) \bmod 8 =
	\begin{cases}
		1	& \text{if $n$ is even} \\
		5	& \text{if $n$ is odd.}
	\end{cases}
\]
This was proved by Gessel~\cite{Gessel}, who then asked whether $A(n)$ is periodic modulo $16$.
One computes the following automaton for $A(n) \bmod 16$.
\begin{center}
	\scalebox{.9}{\vcentergraphics{Apery16}}
\end{center}
This automaton can be summarized as follows.
Let $\beta(n)$ be the number of blocks in the run-length encoding of the standard base-$2$ representation of $n$.
That is, for $n \geq 1$ write $n_l \cdots n_1 n_0 = 1^{\lambda_{\beta(n)}} \cdots d_2^{\lambda_2} d_1^{\lambda_1} d_0^{\lambda_0}$, where $\lambda_i \geq 1$, $d_i \in \{0, 1\}$, $d_i \neq d_{i+1}$, and $d_{\beta(n)} = 1$.
For $n = 0$ we have $\beta(0) = 0$.

\begin{theorem}\label{Apery mod 16}
For all $n \geq 0$, $A(n) \equiv 4 \beta(n) + 1 \mod 16$.
\end{theorem}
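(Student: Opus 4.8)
The plan is to prove Theorem~\ref{Apery mod 16} exactly as the earlier congruences in this section were proved: compute an explicit finite automaton for $A(n) \bmod 16$ and then check that it computes the claimed function of $n$. Concretely, I would apply the multivariate form of Algorithm~\ref{fast diagonal algorithm} extracted from the proof of Theorem~\ref{diagonal closure} to the four-variable rational expression
\[
	\frac{1}{(1 - x_1 - x_2)(1 - x_3 - x_4) - x_1 x_2 x_3 x_4}
\]
with $p = 2$ and $\alpha = 4$. Here the denominator $Q$ satisfies $Q(0,0,0,0) = 1$, so no preprocessing is needed, and because the constant term of the diagonal is $A(0) = 1$, the resulting automaton $\mathcal{M}$ computes $A(n) \bmod 16$ correctly for every $n \geq 0$, including $n = 0$. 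This $\mathcal{M}$ is the automaton displayed just before the statement of the theorem.

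The second ingredient is a small hand-built automaton $\mathcal{N}$ that computes $n \mapsto 4\beta(n) + 1 \bmod 16$ directly. Reading the base-$2$ representation of $n$ least significant digit first, a new block of the run-length encoding begins precisely when the current digit differs from the previous one, and the value $4\beta(n) \bmod 16$ depends only on $\beta(n) \bmod 4$. So $\mathcal{N}$ has an initial state (no digit read, $\beta = 0$, output $1$) together with states indexed by pairs $(b, c)$ with $b \in \{0,1\}$ the last digit read and $c \in \Z/(4\Z)$ the running count of blocks; on digit $d$ the transition fixes $c$ when $d = b$ and increments $c$ when $d \neq b$, and each state $(b,c)$ outputs $4c + 1 \bmod 16$. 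After reading the digits of $n$ the value of $c$ is the number of blocks in the run-length encoding of the base-$2$ string reduced modulo $4$ (for $n = 0$ the string is empty and we remain in the initial state, where $\beta(0) = 0$), so $\mathcal{N}$ outputs $4\beta(n) + 1 \bmod 16$ for all $n \geq 0$.

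It then remains to verify that $\mathcal{M}$ and $\mathcal{N}$ compute the same sequence. This is a finite, decidable check: form the product automaton of $\mathcal{M}$ and $\mathcal{N}$ and confirm that in every state reachable from the pair of initial states the two output labels agree; equivalently, minimize both automata and observe that the results coincide. One can also do this by hand, matching the states of $\mathcal{M}$ against the states $(b,c)$ of $\mathcal{N}$ using the relations among the elements of the $2$-kernel of $(A(n) \bmod 16)_{n \geq 0}$ that are read off from $\mathcal{M}$ as in the proof of Theorem~\ref{Eilenberg}; either route completes the proof.

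The only genuine obstacle is computational rather than conceptual. The states produced by the algorithm are polynomials in four variables, and the crude bound of Remark~\ref{first estimate} on the size of the state set is astronomically large, so one must simply trust (as is borne out in practice) that the computation terminates quickly and yields the small automaton shown. Everything after that — attaching the combinatorial meaning $\beta(n)$ to the states and reconciling it with the least-significant-digit-first convention and the edge case $n = 0$ — is routine bookkeeping.
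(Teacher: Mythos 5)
Your overall strategy matches the paper exactly: run the multivariate version of Algorithm~\ref{fast diagonal algorithm} on the four\nobreakdash-variable rational representation of the Ap\'ery generating function with $p=2$, $\alpha=4$, and read the theorem off the resulting automaton. (The paper's ``proof'' is precisely ``One computes the following automaton\dots This automaton can be summarized as follows.'') But the specific equivalence check you propose does not quite work as stated, and the reason is worth pinning down. The automaton $\mathcal{M}$ produced by the algorithm has the property that reading a further digit $0$ never changes the output: a state $s$ is a polynomial with output $s(0,\dots,0)$, the $0$\nobreakdash-transition sends $s$ to $\Lambda_{0,\dots,0}\bigl(s\cdot Q^{p^\alpha-p^{\alpha-1}}\bigr)$, and since $Q(0,\dots,0)=1$ this map preserves the constant term. (Equivalently, the $p$\nobreakdash-kernel element $\Lambda_0(g)$ has the same leading coefficient as $g$.) So $\mathcal{M}$ returns $A(n)\bmod 16$ even on input strings with superfluous trailing $0$s, i.e.\ written leading zeros. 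Your hand-built $\mathcal{N}$ does \emph{not} have this property: from a state $(1,c)$, a further $0$ moves to $(0,c+1)$ and changes the output. On the input string $1\,0$ (least significant digit first, so $n=1$ written with a leading zero), $\mathcal{M}$ outputs $5$ but $\mathcal{N}$ outputs $9$. Hence $\mathcal{M}$ and $\mathcal{N}$ are not equivalent as automata; the product-automaton check as you describe it, or minimizing both and comparing, would flag a spurious disagreement at such a state and the proof would stall.

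The repair is routine but must be made explicit. You only need $\mathcal{M}$ and $\mathcal{N}$ to agree on inputs that are standard base\nobreakdash-$2$ representations read least significant digit first, i.e.\ on the regular language consisting of the empty word together with all words whose final symbol is $1$. So either restrict the product-automaton check to states reachable along such inputs (intersect with a two-state DFA tracking whether the most recently read symbol was $1$), or redefine the $0$\nobreakdash-transitions of $\mathcal{N}$ so that they preserve the output label --- this does not change what $\mathcal{N}$ computes on standard representations --- and then test equivalence of the modified $\mathcal{N}$ against $\mathcal{M}$. With that adjustment your argument is complete and is the same proof as the paper's, merely spelled out more formally.
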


We use this structure to answer Gessel's question.
(Note that, more generally, it is decidable whether an automatic sequence is eventually periodic; see, for example, \cite{Allouche--Rampersad--Shallit}.)

\begin{theorem}
The sequence $(A(n) \bmod 16)_{n \geq 0}$ is not eventually periodic.
\end{theorem}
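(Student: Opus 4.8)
The plan is to use Theorem~\ref{Apery mod 16}, which gives $A(n) \equiv 4\beta(n) + 1 \bmod 16$: since $8 \nequiv 0 \bmod 16$, any two integers $n_1, n_2$ with $\beta(n_1) - \beta(n_2) \equiv 2 \bmod 4$ satisfy $A(n_1) \nequiv A(n_2) \bmod 16$. So I would argue by contradiction: assume $(A(n) \bmod 16)_{n \ge 0}$ is eventually periodic with period $\pi \ge 1$ and preperiod $N$, so that $A(n_1) \equiv A(n_2) \bmod 16$ whenever $n_1 \equiv n_2 \bmod \pi$ and $n_1, n_2 \ge N$. It then suffices to exhibit $n_1 \equiv n_2 \bmod \pi$, both at least $N$, with $\beta(n_1) - \beta(n_2) \equiv 2 \bmod 4$.

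To construct such witnesses, I would write $\pi = 2^s t$ with $t$ odd and let $d$ be the multiplicative order of $2$ modulo $t$ (take $d = 1$ if $t = 1$), so that $t \mid 2^d - 1$. For an integer $j \ge 0$ and exponents $0 = e_0 < e_1 < \cdots < e_j$ chosen with $e_{i+1} - e_i \ge d + 1$, consider
\[
	n = 2^s\,(2^d - 1)\sum_{i=0}^{j} 2^{e_i}.
\]
Every such $n$ is divisible by $\pi$, because $t \mid 2^d - 1$. The gap condition makes the addition defining $(2^d - 1)\sum_i 2^{e_i}$ carry-free, so its binary expansion is a sequence of $j+1$ pairwise separated blocks $\underbrace{1\cdots1}_{d}$, hence has exactly $2j+1$ runs; multiplying by $2^s$ appends $s$ zeros, contributing one further trailing run of zeros when $s \ge 1$. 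Thus $\beta(n)$ is $2j+1$ or $2j+2$, and in either case $\beta(n) \bmod 4$ depends only on the parity of $j$. Choosing $j_1$ even and $j_2$ odd, and choosing the exponents large enough that the resulting $n_1, n_2$ exceed $N$, I obtain $\beta(n_1) - \beta(n_2) = 2(j_1 - j_2) \equiv 2 \bmod 4$ together with $n_1 \equiv n_2 \equiv 0 \bmod \pi$, which is the contradiction.

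Most of what remains is routine: checking that the gap $e_{i+1}-e_i \ge d+1$ really does prevent carries and the merging of runs (so that the run count is genuinely $2j+1$), and tracking the extra trailing run produced by the factor $2^s$. The main subtlety to get right — and the reason the construction carries \emph{both} a factor $2^s$ and a factor $2^d - 1$ — is that the two witnesses must be congruent modulo the full modulus $\pi$, not merely modulo its odd part or its $2$-part; building $n$ as an explicit multiple of $2^s t$ from the outset is what makes the divisibility automatic. (Alternatively one could appeal to the general decidability of eventual periodicity for automatic sequences~\cite{Allouche--Rampersad--Shallit}, but the explicit family above is more transparent.)
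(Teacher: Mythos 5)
Your proof is correct, but it follows a genuinely different construction than the paper's. Both proofs rest on Theorem~\ref{Apery mod 16}, so the core lemma is the same; the divergence is in how the witnesses are produced. The paper fixes a candidate period $m$, writes $m$ in base $2$ with $l+1$ digits, and chooses $n$ so that $n + m$ simply appends $m$'s binary string after a run of zeros: either $n = 2^j$ (giving $\beta(n) = 2$, $\beta(n+m) = 2 + \beta(m)$) or $n = 2^j + 2^{l+1}$ (giving $\beta(n) = 4$), split into two cases according to $\beta(m) \bmod 4$. You instead construct two explicit \emph{multiples} of $\pi$ and compare their $\beta$ values directly, which is why you need the $2^s(2^d-1)$ machinery to guarantee divisibility by the full period. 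Your route avoids the paper's case split (the parity of $j$ is simply toggled) at the cost of introducing the order of $2$ modulo the odd part of $\pi$; the paper's route avoids the number-theoretic apparatus at the cost of a two-case argument. Both are clean. One very minor presentational wrinkle: you write ``$\beta(n)$ is $2j+1$ or $2j+2$,'' but since $s$ is fixed by $\pi$ it is exactly one of these two (namely $2j+1$ if $s=0$ and $2j+2$ if $s \geq 1$), which is what makes $\beta(n_1) - \beta(n_2) = 2(j_1 - j_2)$ exact rather than merely a congruence; you do use this correctly, but it would be worth stating explicitly.
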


\begin{proof}
For each candidate period length $m \geq 1$, it suffices to exhibit arbitrarily large $n$ such that $A(n) \nequiv A(n + m) \mod 16$.
Write $m = m_l \cdots m_1 m_0$ in base $2$.
If $\beta(m) \nequiv 0 \mod 4$, let $n = 2^j$ for some $j \geq l + 2$; then $\beta(n) = 2$ and $\beta(n + m) = 2 + \beta(m) \nequiv 2 \mod 4$, so by Theorem~\ref{Apery mod 16} $A(n) \nequiv A(n + m) \mod 16$.
On the other hand, if $\beta(m) \equiv 0 \mod 4$, let $n = 2^j + 2^{l+1}$ for some $j \geq l + 3$; then $\beta(n) = 4$ and $\beta(n + m) = 2 + \beta(m) \equiv 2 \mod 4$, so again by Theorem~\ref{Apery mod 16} $A(n) \nequiv A(n + m) \mod 16$.
\end{proof}

Beukers~\cite{Beukers} conjectured that if there are $\alpha$ digits in the standard base-$5$ representation of $n$ that belong to the set $\{1, 3\}$ then $A(n) \equiv 0 \mod 5^\alpha$.
We can prove the statement for $\alpha = 2$.

\begin{theorem}
If two digits in the standard base-$5$ representation of $n$ belong to the set $\{1, 3\}$, then $A(n) \equiv 0 \mod 5^2$.
\end{theorem}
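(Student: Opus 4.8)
The plan is to prove the congruence exactly as the surrounding results were obtained: by computing a finite automaton for $A(n) \bmod 5^2$ and reading off the claimed structure. Since $\sum_{n \geq 0} A(n) x^n$ is the diagonal of
\[
	\frac{1}{(1 - x_1 - x_2)(1 - x_3 - x_4) - x_1 x_2 x_3 x_4},
\]
whose denominator $Q$ satisfies $Q(0,0,0,0) = 1 \nequiv 0 \bmod 5$, Theorem~\ref{diagonal closure} applies, and the four-variable analogue of Algorithm~\ref{fast diagonal algorithm} produces an automaton $\mathcal{M} = (\mathcal{S}, \Sigma_5, \delta, s_1, \Z/25\Z, \omega)$ with $\omega(\delta(s_1, w)) = A(n) \bmod 25$ whenever $w$ is the standard base-$5$ representation of $n$. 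Unlike the Catalan and Motzkin cases, no modification of the initial term is needed here, because we are taking the diagonal directly rather than applying Proposition~\ref{Furstenberg} with a shift; thus $\mathcal{M}$ computes $A(n) \bmod 25$ for every $n \geq 0$.

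Next I would isolate the states of $\mathcal{M}$ that can only be reached after at least two digits from $\{1,3\}$ have been consumed. Concretely, set
\[
	T := \{\, s \in \mathcal{S} \;:\; s = \delta(s_1, w) \text{ for some } w \in \Sigma_5^* \text{ containing at least two symbols from } \{1,3\} \,\}.
\]
The set $T$ is computed in one breadth-first pass through $\mathcal{M}$: run the product of $\mathcal{M}$ with a counter on $\{0, 1, 2\}$ that starts at $0$, increments when an edge labeled $1$ or $3$ is traversed, and saturates at $2$; then $T$ consists of the states that occur with counter value $2$. Moreover $T$ is closed under $\delta$: because $\mathcal{M}$ reads the least significant digit first, $\delta(s, d) = \delta(s_1, d\,w)$ for any word $w$ with $\delta(s_1, w) = s$, and prepending the digit $d$ to $w$ cannot decrease the number of symbols from $\{1,3\}$. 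This records the \emph{monotonicity} of the property: once two such digits have been read, the output stays $0$ no matter what the remaining (more significant) digits are.

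The remaining step, and the only one requiring the explicit automaton, is the finite verification that $\omega(s) = 0$ for every $s \in T$. Granting it, if the standard base-$5$ representation $w$ of $n$ contains two digits from $\{1,3\}$, then $\delta(s_1, w) \in T$, so $A(n) \equiv \omega(\delta(s_1, w)) = 0 \bmod 5^2$, which is the assertion. I expect the main obstacle to be purely computational: by Remark~\ref{first estimate} the states are polynomials in four variables of degree up to $5$ in each variable over $\Z/25\Z$, so although $\mathcal{S}$ is finite it is potentially large, and one must be able to carry Algorithm~\ref{fast diagonal algorithm} to completion for a four-variable input. Once $\mathcal{M}$ is in hand the rest is mechanical inspection; and, as with Theorem~\ref{Apery mod 16}, one may hope that the states relevant to $T$ organize into a transparent combinatorial description — here, the saturated count of base-$5$ digits of $n$ lying in $\{1,3\}$ — which would give a human-readable restatement of the automaton.
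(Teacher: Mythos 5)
Your proposal is correct and takes essentially the same approach as the paper: compute the base-$5$ automaton for $A(n) \bmod 25$ via (the four-variable version of) Algorithm~\ref{fast diagonal algorithm} and then perform a finite inspection of which states are reached once two digits from $\{1,3\}$ have been consumed. The paper carries out the computation explicitly (the automaton has $29$ states, and all transitions labeled $1$ or $3$ from the relevant reachable states point to a single absorbing state $s_6$ with output $0$, so your set $T$ is in fact $\{s_6\}$), while you describe the same inspection more abstractly via the saturating-counter product and the closure of $T$ under $\delta$; both establish the theorem once the automaton is in hand.
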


\begin{proof}
The automaton one computes for $A(n) \bmod 5^2$ has $29$ states $s_1, s_2, \dots, s_{29}$, indexed according to their positions in a breadth-first traversal of the automaton starting with the initial state $s_1$.
Only the states $s_2, s_4, s_6, s_8, s_{10}$ have an incoming edge labeled $1$ or $3$.
That is, after reading the first $1$ or $3$ in the base-$5$ representation of $n$, the automaton is in one of these five states.
The states that can be reached from these five states, in addition to themselves, are $s_5, s_7, s_{11}, s_{12}$.
It suffices to consider a second $1$ or $3$ read while in one of these nine states.
All eighteen edges labeled $1$ or $3$ leaving one of the nine states mentioned point to $s_6$.
All five edges leaving $s_6$ are loops, and the output corresponding to $s_6$ is $0$.
\end{proof}

The output corresponding to each of the nine states mentioned in the previous proof is a multiple of $5$.
Deleting these nine states produces the automaton in Figure~\ref{Apery 25} and the following theorem.

\begin{theorem}\label{Apery mod 25}
Let $e_2(n)$ be the number of $2$s in the standard base-$5$ representation of $n$.
If $n$ contains no $1$ or $3$ in base $5$, then $A(n) \equiv (-2)^{e_2(n)} \mod 25$.
\end{theorem}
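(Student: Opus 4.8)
The plan is to extract the theorem from the automaton for $A(n) \bmod 5^2$, exactly as in the proof of the preceding theorem. Recall that this automaton has $29$ states, and that the nine states $s_2, s_4, s_5, s_6, s_7, s_8, s_{10}, s_{11}, s_{12}$ are closed under the transition function and can be entered from outside only along an edge labeled $1$ or $3$. Hence, if the base-$5$ representation of $n$ uses only the digits $0$, $2$, $4$ --- which is precisely the hypothesis --- then reading this representation into the automaton never visits any of these nine states, so the computation takes place entirely within the $20$-state sub-automaton of Figure~\ref{Apery 25} obtained by deleting them. It therefore suffices to describe this $20$-state automaton on the alphabet $\{0, 2, 4\}$.

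The key step is to check the shape of Figure~\ref{Apery 25}: every state carries a self-loop labeled $0$ and a self-loop labeled $4$; the edges labeled $2$ form a single directed cycle $q_0 \to q_1 \to \cdots \to q_{19} \to q_0$ through all $20$ states, where $q_0$ is the initial state; and the output satisfies $\omega(q_k) = (-2)^k \bmod 25$ for $0 \le k \le 19$, so that in particular $\omega(q_0) = 1$, $\omega(q_1) = 23$, $\omega(q_2) = 4$, $\omega(q_3) = 17$. This is a finite inspection. It is also essentially forced: the order of $-2$ modulo $25$ is $20$, so the values $(-2)^k$ for $0 \le k \le 19$ are pairwise distinct and a separate state is needed for each, and since the $0$- and $4$-edges must preserve the residue class of the exponent, they can only be loops.

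Granting this, the conclusion is immediate. For $n$ with base-$5$ digits in $\{0, 2, 4\}$, feeding the digits into the automaton (least significant first) traverses exactly $e_2(n)$ of the edges labeled $2$ together with some loops labeled $0$ or $4$, so it ends in state $q_k$ with $k \equiv e_2(n) \bmod 20$. Since $(-2)^{20} \equiv 1 \bmod 25$, this gives $A(n) = \omega(q_k) = (-2)^{e_2(n) \bmod 20} = (-2)^{e_2(n)} \bmod 25$. For $n = 0$ the base-$5$ representation is empty, the automaton stays in $q_0$, and $A(0) = 1 = (-2)^0$, as required.

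The main obstacle, such as it is, is entirely bound up in trusting the mechanical computation of the automaton for $A(n) \bmod 25$ and verifying its $20$-cycle structure on the digits $\{0, 2, 4\}$; there is no deeper difficulty once that automaton is in hand. A computation-free argument seems harder: one would have to establish congruences of the form $A(5n) \equiv A(n)$, $A(5n + 4) \equiv A(n)$, and $A(5n + 2) \equiv -2\,A(n) \bmod 25$ for $n$ whose base-$5$ digits lie in $\{0,2,4\}$, and to see why that restriction on $n$ suppresses the correction terms that would otherwise appear --- which is exactly the phenomenon encoded by the nine deleted states. We do not pursue that route here.
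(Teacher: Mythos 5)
Your proof is correct and takes essentially the same route as the paper's. The paper's argument consists of deleting the nine states with output divisible by $5$ (those accessible only via edges labeled $1$ or $3$) and reading the theorem directly off the resulting subautomaton of Figure~\ref{Apery 25}; you do exactly this, while spelling out more explicitly what must be inspected --- the $0$- and $4$-loops, the $20$-cycle of $2$-edges, and the output labels $(-2)^k \bmod 25$ --- and noting the side observations that $-2$ generates $(\Z/25\Z)^\times$ and that $n=0$ is handled by the empty word.
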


\begin{figure}
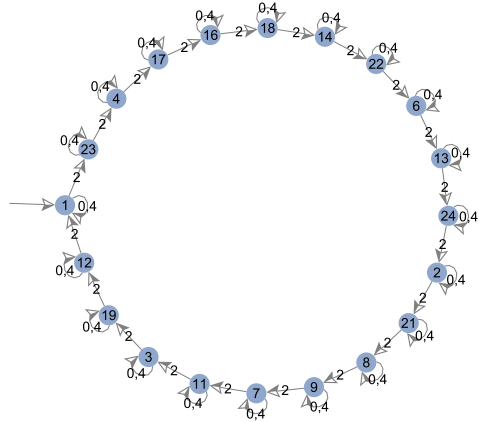

	\begin{center}
		\scalebox{.8}{\vcentergraphics{Apery25cycle}}
		\caption{A subautomaton computing certain Ap\'ery numbers modulo $25$.}
		\label{Apery 25}
	\end{center}
\end{figure}

Beukers~\cite{Beukers} also conjectured that if the standard base-$11$ representation of $n$ contains $\alpha$ occurrences of the digit $5$ then $A(n) \equiv 0 \mod 11^\alpha$.
These two conjectures were generalized by Deutsch and Sagan~\cite[Conjecture~5.13]{Deutsch--Sagan} to all primes and recently proved by Delaygue~\cite{Delaygue}.
In theory, we can compute an automaton for $A(n) \bmod p^\alpha$ for any prime power, and hence prove the conjecture for that prime power.
However, computing the automaton for $11^2$ or a larger prime power is computationally difficult in practice.

Krattenthaler and M\"uller~\cite[Conjecture~66]{Krattenthaler--Muller} conjectured the following (written in a slightly different form), which they were not able to prove with their method.

\begin{theorem}\label{Apery mod 9}
Let $e_1(n)$ be the number of $1$s in the standard base-$3$ representation of $n$.
For all $n \geq 0$, $A(n) \equiv 5^{e_1(n)} \mod 9$.
\end{theorem}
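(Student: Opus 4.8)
The plan is to invoke Theorem~\ref{diagonal closure} in the form of the multivariate generalization of Algorithm~\ref{fast diagonal algorithm}. Since $\sum_{n \geq 0} A(n) x^n = \mathcal{D}(1/Q)$ with $Q = (1 - x_1 - x_2)(1 - x_3 - x_4) - x_1 x_2 x_3 x_4$ and $Q(0,0,0,0) = 1 \nequiv 0 \mod 3$, the sequence $(A(n) \bmod 9)_{n \geq 0}$ is $3$-automatic and an explicit automaton $\mathcal{M}$ is produced by running the algorithm on $R = 1$, $Q$ as above, $p = 3$, $\alpha = 2$. Here the states are polynomials in $(\Z/9\Z)[x_1, x_2, x_3, x_4]$; since $\deg_{x_i} Q = 1$ for each $i$, the degree bound from the proof of Theorem~\ref{diagonal closure} gives states of degree at most $p^{\alpha - 1} \deg Q = 3$ in each variable, so the computation is finite and feasible.

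The second step is to read the $3$-kernel relations off $\mathcal{M}$. I expect (and would verify directly from the output of the algorithm, minimizing first if the raw automaton carries redundant states) that $\mathcal{M}$ realizes exactly six reachable states, with outputs $1, 5, 7, 8, 4, 2$ --- precisely the powers $5^0, 5^1, \dots, 5^5 \bmod 9$, since $5$ has multiplicative order $6$ modulo $9$ --- in which every edge labeled $0$ or $2$ is a loop, the edges labeled $1$ form a single $6$-cycle $1 \to 5 \to 7 \to 8 \to 4 \to 2 \to 1$ on the outputs, and the initial state is the one whose output is $1$. In terms of the Cartier operator applied to $f(x) = \sum_{n \geq 0} A(n) x^n$, this structure is exactly the assertion that
\[
	\Lambda_0(f) \equiv f, \qquad \Lambda_1(f) \equiv 5 f, \qquad \Lambda_2(f) \equiv f \mod 9,
\]
that is, $A(3n) \equiv A(n)$, $A(3n+1) \equiv 5 A(n)$, and $A(3n+2) \equiv A(n) \mod 9$ for all $n \geq 0$. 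Equivalently, $\mathcal{M}$ computes the same sequence as the obvious six-state automaton that tracks $e_1(n) \bmod 6$ (loop on $0$ and $2$, increment on $1$, output $5^j$ in state $j$); since automaton equivalence is decidable, this is a finite check.

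Granting the three kernel congruences, the theorem follows by a one-line induction on the number of base-$3$ digits of $n$: the base case is $A(0) = 1 = 5^0$ with $e_1(0) = 0$, and appending a base-$3$ digit $d$ multiplies $A$ by $5$ modulo $9$ when $d = 1$ and fixes it when $d \in \{0, 2\}$, while incrementing $e_1$ by exactly $[d = 1]$; because $5^6 \equiv 1 \mod 9$ the value $5^{e_1(n)} \bmod 9$ is well defined and the induction closes.

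The main obstacle is the computation in the second step: there is no conceptual shortcut, so one must carry out the multivariate polynomial arithmetic modulo $9$ in four variables to produce $\mathcal{M}$ and then confirm, state by state, that its transition structure is precisely the $6$-cycle described above. The theorem is not guaranteed a priori --- this is the conjecture of Krattenthaler and M\"uller that their method did not settle --- and the proof rests entirely on the fact that the algorithm's output has exactly this shape; once $\mathcal{M}$ is in hand, reading off the cyclic structure is routine.
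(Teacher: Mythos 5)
Your proposal is correct and is essentially the paper's proof spelled out: the paper likewise obtains the theorem by running the multivariate generalization of Algorithm~\ref{fast diagonal algorithm} on $R=1$, $Q=(1-x_1-x_2)(1-x_3-x_4)-x_1x_2x_3x_4$ with $p=3$, $\alpha=2$, and reading the result off the resulting automaton (displayed on the right of Figure~\ref{Apery 7 and 9}), whose structure is exactly the six-state $5$-power cycle you describe. The paper merely adds that Gessel had already proved the statement by other means, making the automaton computation a second proof.
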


In fact this result was already proved by Gessel~\cite[Theorem~3(iii)]{Gessel}.
Computing the automaton on the right in Figure~\ref{Apery 7 and 9} gives a second proof.

Krattenthaler and M\"uller~\cite[Conjecture~65]{Krattenthaler--Muller} also gave a conjecture regarding $\sum_{k=0}^n \binom{n}{k}^2 \binom{n+k}{k}$~\seq{A005258}, which arose in Ap\'ery's proof of the irrationality of $\zeta(2)$.
The generating function of this sequence is the diagonal of
\[
	\frac{1}{(1-x_1) (1-x_2) (1-x_3) (1-x_4) - (1-x_1) x_1 x_2 x_3},
\]
so we prove this conjecture as well.
Krattenthaler and M\"uller~\cite{Krattenthaler--Muller2014} have since given another proof.

\begin{theorem}
The value of $\sum_{k=0}^n \binom{n}{k}^2 \binom{n+k}{k}$ modulo $9$ is given by the following base-$3$ automaton.
\begin{center}
	\scalebox{.8}{\vcentergraphics{2Apery9}}
\end{center}
\end{theorem}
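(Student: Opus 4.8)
The plan is to apply, essentially verbatim, the algorithm extracted from the proof of Theorem~\ref{diagonal closure}. First I would verify that the sequence $a_n = \sum_{k=0}^n \binom{n}{k}^2 \binom{n+k}{k}$ is genuinely the diagonal coefficient sequence of $R/Q$, where
\[
	\frac{R(x_1,x_2,x_3,x_4)}{Q(x_1,x_2,x_3,x_4)} = \frac{1}{(1-x_1)(1-x_2)(1-x_3)(1-x_4) - (1-x_1)x_1 x_2 x_3},
\]
i.e.\ that the coefficient of $x_1^n x_2^n x_3^n x_4^n$ equals $a_n$. This is a routine multivariate series manipulation: expand $1/Q$ as a geometric-type series in the single ``variable'' $(1-x_1)x_1x_2x_3 / \big((1-x_1)(1-x_2)(1-x_3)(1-x_4)\big)$, then extract the relevant coefficient and match it term by term with the binomial sum (alternatively one cites the known rational representations of this Apéry-like number). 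Note $Q(0,0,0,0) = 1 \nequiv 0 \bmod 3$, so the hypotheses of Theorem~\ref{diagonal closure} are satisfied with $p = 3$, $\alpha = 2$, and hence $(a_n \bmod 9)_{n \geq 0}$ is $3$-automatic.

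Next I would run the four-variable analogue of Algorithm~\ref{fast diagonal algorithm}. Working in $\mathcal{A} = \Z/9\Z$, take the initial state $s_1 = R \cdot Q^{p^{\alpha-1}-1} = Q^{2} \bmod 9$, and repeatedly apply the maps
\[
	\mu_{d,d,d,d}(s) = \Lambda_{d,d,d,d}\!\big(s \cdot Q^{\,p^\alpha - p^{\alpha-1}}\big) \bmod 9 = \Lambda_{d,d,d,d}\!\big(s \cdot Q^{6}\big) \bmod 9
\]
for $d \in \{0,1,2\}$, accumulating the set of distinct states until closure. The proof of Theorem~\ref{diagonal closure} guarantees termination, since each state is a polynomial of degree at most $p^{\alpha-1}\deg Q = 3\deg Q$ in each variable over the finite ring $\Z/9\Z$. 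The output attached to a state $s(x_1,\dots,x_4)$ is its constant term $s(0,\dots,0)$, which is the constant term of the series $\mathcal{D}\big(s/Q^{p^{\alpha-1}}\big)$.

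Then I would check that the resulting $3$-DFAO computes $a_n \bmod 9$. By the key identity in the proof of Theorem~\ref{diagonal closure},
\[
	\Lambda_d\!\left(\mathcal{D}\!\left(\frac{s}{Q^{p^{\alpha-1}}}\right)\right) \equiv \mathcal{D}\!\left(\frac{\mu_{d,\dots,d}(s)}{Q^{p^{\alpha-1}}}\right) \bmod p^\alpha,
\]
so the states produced are (representatives of) the elements of $\ker_3\!\big(\mathcal{D}(R/Q)\bmod 9\big)$, and Theorem~\ref{Eilenberg} shows that reading the base-$3$ digits of $n$ least significant first yields $a_n \bmod 9$. Finally I would minimize the automaton — merging states with equal output and identical transition behavior — and confirm that the minimal automaton is exactly the one pictured, with an independent sanity check against the first several dozen values of the sequence.

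The main obstacle is not conceptual but verificational. The ``proof'' is a finite but nontrivial computation whose correctness we are asserting by having performed it, so the genuinely load-bearing steps are: (a) carefully verifying the diagonal identity for $R/Q$, which is the only step requiring real calculation rather than an appeal to Theorem~\ref{diagonal closure}; and (b) certifying that the machine-produced automaton, once minimized, coincides with the depicted one rather than merely being equivalent to it. Everything else is a direct consequence of Theorem~\ref{diagonal closure}, the algorithm derived from its proof, and Theorem~\ref{Eilenberg}.
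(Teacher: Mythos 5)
Your proposal matches the paper's approach exactly: the paper establishes that $\sum_{k=0}^n \binom{n}{k}^2\binom{n+k}{k}$ is the full diagonal of the stated rational function, invokes Theorem~\ref{diagonal closure} with $p=3$, $\alpha=2$, and obtains the automaton by running the multivariate analogue of Algorithm~\ref{fast diagonal algorithm}. Your outline is, if anything, more explicit than the paper's (which treats the computation as routine once the diagonal representation is in hand), but the decomposition, the key theorem used, and the verification strategy are the same.
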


\section{Generating automata using Ore polynomials}\label{hard_algorithm}

\subsection{Theory}

In this section we take an alternate approach, based on \cite[Section~3]{Denef--Lipshitz}, to computing an automaton for $a_n \bmod p^\alpha$ for a given algebraic sequence $(a_n)_{n \geq 0}$ of $p$-adic integers.
Unlike the method discussed in Section~\ref{easy_algorithm}, this method works for every algebraic sequence modulo every prime power, with no restrictions on the coefficients of the polynomial.
However, rather than computing modulo $p^\alpha$ throughout, one first computes an automaton for the $p^0$ digit of $a_n$, then an automaton for the $p^1$ digit of $a_n$, etc., so that an automaton for $a_n \bmod p^\alpha$ is built up in $\alpha$ steps.
One might suspect that this method is therefore more computationally intensive, and this suspicion is substantiated by the presence of repeated computations involving Ore's lemma and resultants.

We first introduce projection and injection maps.
Identify $\F_p$ with $\{0, 1, \dots, p-1\}$, and define $\pi:\Z_p \rightarrow \F_p$ by $\pi(n_0 p^0 + n_1 p^1 + n_2 p^2 + \cdots) = n_0$, where $n_i \in \{0, 1, \dots, p-1\}$.
Define $\iota: \F_p \rightarrow \Z_p$ by $\iota(d) = d p^0 + 0 p^1 + 0 p^2 + \cdots$.
The maps $\pi$ and $\iota$ extend coefficient-wise to maps $\pi: \Z_p\llbracket x \rrbracket \rightarrow \F_p\llbracket x \rrbracket$ and $\iota: \F_p\llbracket x \rrbracket \rightarrow \Z_p\llbracket x \rrbracket$.

\begin{lemma}[Ore's lemma]\label{ore}
If $F(x) \in \F_p\llbracket x \rrbracket$ such that $P(x,F(x))= 0$ for some  $P(x,y) \in \F_p[x,y]$, then   there exists a polynomial $P^*(x,y) = g_0(x) y +g_1(x) y^p + \dots + g_m(x) y^{p^m} \in \F_p[x,y]$ such that $P^*(x,F(x))=0$ and $g_0(x) \neq 0$.
\end{lemma}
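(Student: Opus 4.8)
The plan is to prove the slightly sharper statement that, among all nonzero \emph{additive} polynomials in $\F_p[x,y]$ annihilating $F$ --- that is, polynomials of the form $\sum_i g_i(x)y^{p^i}$ with $\sum_i g_i(x)F(x)^{p^i}=0$ --- any one of smallest $y$-degree automatically has $g_0\neq 0$; taking $P^*$ to be such a polynomial then finishes the proof. First I would check that nonzero additive annihilators of $F$ exist at all: if $F=0$ then $P^*(x,y)=y$ works, and otherwise, since $F$ is algebraic, $K(F)$ is a finite extension of $K\colonequal\F_p(x)$, say of degree $n$, so the $n+1$ elements $F,F^p,\dots,F^{p^n}$ of the $n$-dimensional $K$-vector space $K(F)$ are $K$-linearly dependent; clearing denominators in such a dependence gives a nonzero $\sum_{i=0}^m g_i(x)y^{p^i}\in\F_p[x,y]$ annihilating $F$. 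Now fix one, call it $P^*$, with $m$ (the largest index with $g_m\neq 0$) as small as possible, and suppose toward a contradiction that $g_0=0$.

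Let $j\geq 1$ be the smallest index with $g_j\neq 0$, so $\sum_{i=j}^m g_i(x)F^{p^i}=0$. I would extract a $p^j$-th power by working in the field $\F_p((x^{1/p^j}))$, which contains both $\F_p((x))\ni F$ and the elements $g_i(x^{1/p^j})$, and in which $g_i(x)=g_i(x^{1/p^j})^{p^j}$. Since $F^{p^i}=(F^{p^{i-j}})^{p^j}$, the relation becomes $\bigl(\sum_{i=j}^m g_i(x^{1/p^j})\,F^{p^{i-j}}\bigr)^{p^j}=0$, and as we are in a field this forces $\sum_{k=0}^{m-j}g_{k+j}(x^{1/p^j})\,F^{p^k}=0$: an additive-type relation of order $m-j$ with nonzero ``constant'' coefficient $g_j(x^{1/p^j})$, but now with coefficients in $\F_p[x^{1/p^j}]$ rather than $\F_p[x]$, so it does not yet contradict minimality.

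The decisive step is to push this relation back down to $\F_p[x]$. The powers $1,x^{1/p^j},\dots,x^{(p^j-1)/p^j}$ form a basis of $\F_p((x^{1/p^j}))$ over $\F_p((x))$, so I would expand each $g_{k+j}(x^{1/p^j})=\sum_{l=0}^{p^j-1}x^{l/p^j}\,h_{k,l}(x)$ with $h_{k,l}\in\F_p[x]$ and regroup the relation as $\sum_{l=0}^{p^j-1}x^{l/p^j}\bigl(\sum_{k=0}^{m-j}h_{k,l}(x)F^{p^k}\bigr)=0$. Here the hypothesis $F\in\F_p\llbracket x\rrbracket$ enters in exactly one place: it guarantees that every $F^{p^k}$ lies in $\F_p((x))$, so each inner sum lies in $\F_p((x))$, and the $\F_p((x))$-linear independence of the $x^{l/p^j}$ then forces $\sum_{k=0}^{m-j}h_{k,l}(x)F^{p^k}=0$ for every $l$. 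Each of these is an additive annihilator of $F$ over $\F_p[x]$; if not identically zero it has $y$-degree at most $p^{m-j}<p^m$, hence order strictly less than $m$, contradicting the minimality of $m$. So all of them are zero, hence all $h_{k,l}=0$, hence every $g_{k+j}=0$, contradicting $g_j\neq 0$. Therefore $g_0\neq 0$.

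I expect the descent of the third paragraph to be the one step that needs care, and it is worth stressing why. The conclusion is genuinely false for an arbitrary algebraic $F$ --- for instance $x^{1/p}$ is algebraic over $\F_p(x)$ but its minimal additive annihilator is $x^{p-1}y^p-y^{p^2}$, whose coefficient of $y$ vanishes --- and the power-series hypothesis is used solely to ensure $F^{p^k}\in\F_p((x))$, which is precisely what lets us separate off the fractional powers of $x$ and so isolate a genuine $\F_p[x]$-relation of smaller order. The remaining ingredients (the finite-dimensional linear algebra in $K(F)$ and the $p^j$-th root extraction inside the larger field) are routine.
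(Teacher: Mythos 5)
Your proof is correct and follows essentially the same descent argument as the one the paper cites for this lemma (Allouche--Shallit, Lemma~12.2.3): produce an additive annihilator by linear algebra in the finite extension $\F_p(x)(F)$, then descend to one with nonzero constant coefficient by exploiting that the $q$-th power map is injective and that $F$ has only integer powers of $x$. Your packaging of the exponent-residue decomposition via the field $\F_p((x^{1/p^j}))$ and a single $p^j$-th root extraction is a cosmetic variant of the usual decomposition $g_i(x)=\sum_{l}x^{l}c_{i,l}(x)^{p}$ carried out iteratively over $\F_p[x]$, and your counterexample $F=x^{1/p}$ correctly pinpoints where the power-series hypothesis is used.
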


For a proof, see \cite[Lemma~12.2.3]{ash}.
We say that a polynomial $P^*(x,y) \in \F_p[x,y]$ satisfying the conclusions of Lemma~\ref{ore} is in {\em Ore form}.
The primary advantage of a polynomial in Ore form is that the partial derivative 
$P_y^* (x,y) \colonequal \frac{\partial}{\partial y} P^*(x,y) = g_0(x)$ is not the zero polynomial, so the following version of Hensel's lemma, whose proof can be found in~\cite[Remark 2.2]{Denef--Lipshitz},
 can be applied.

\begin{lemma}\label{Hensel}
Suppose that $F(x) \in \F_p\llbracket x \rrbracket$ and $P(x,y) \in \F_p[x, y]$ such that $P(x,F(x)) = 0$ and $P_y (x,y)|_{y=F(x)} \neq 0$. Then there exists $F^{(1)}(x) \in \Z_p\llbracket x \rrbracket$ such that $\iota(P)(x,F^{(1)}(x))= 0$ and $\pi(F^{(1)}(x)) = F(x)$.
\end{lemma}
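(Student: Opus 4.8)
The plan is to prove this by a Hensel-type lifting with respect to the ideal $(p)$ in the ring $R = \Z_p\llbracket x \rrbracket$, which is $(p)$-adically complete and separated with $R/pR = \F_p\llbracket x \rrbracket$. Write $Q(x,y) = \iota(P)(x,y) \in \Z_p[x,y]$ and $a_0 = \iota(F)(x) \in R$; since $Q(x,a_0) \bmod p = P(x,F(x)) = 0$, the element $a_0$ is an approximate root, $Q(x,a_0) \in pR$. In the case most relevant to the applications --- where $P_y(x,F(x))$ is a \emph{unit} of $\F_p\llbracket x \rrbracket$, as happens when $P$ comes from Ore's lemma with $g_0(0)\neq 0$ --- the lemma follows at once: $F(0)$ is then a simple root of $Q(0,y) \bmod p$, so Hensel over $\Z_p$ lifts it to a root $\widetilde a \in \Z_p$ of $Q(0,y)$ with $Q_y(0,\widetilde a)$ a $p$-adic unit, and Hensel's lemma in the $(x)$-adic topology over $\Z_p\llbracket x \rrbracket$ then lifts $\widetilde a$ to $F^{(1)}(x) \in \Z_p\llbracket x \rrbracket$ with $Q(x,F^{(1)}(x)) = 0$ and $F^{(1)}(0) = \widetilde a$; since $P_y(0,F(0))\neq 0$, the power series root of $P$ with constant term $F(0)$ is unique, so $\pi(F^{(1)}) = F$.

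In general one builds $F^{(1)}$ by successive approximation. Given $a_k \in R$ with $a_k \equiv a_0 \bmod p$ and $Q(x,a_k) \equiv 0 \bmod p^{k+1}$, write $Q(x,a_k) = p^{k+1}H_k$ and seek $a_{k+1} = a_k + p^{k+1}G$; by Taylor expansion
\[
	Q(x,a_{k+1}) \equiv p^{k+1}\bigl(H_k + G\cdot Q_y(x,a_k)\bigr) \bmod p^{k+2},
\]
so it suffices to solve $G\cdot Q_y(x,a_k) \equiv -H_k \bmod p$ in $\F_p\llbracket x \rrbracket$, noting $Q_y(x,a_k) \bmod p = P_y(x,F(x)) \neq 0$. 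Iterating and passing to the $(p)$-adic limit produces $F^{(1)} \in R$ with $Q(x,F^{(1)}) = 0$ and $F^{(1)} \equiv a_0 \bmod p$, hence $\pi(F^{(1)}) = F$.

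The hard part is the solvability of the congruence $G\cdot Q_y(x,a_k) \equiv -H_k \bmod p$ when $P_y(x,F(x))$ vanishes at $x=0$, since then $P_y(x,F(x))$ is a non-unit of $\F_p\llbracket x \rrbracket$ and need not divide $H_k$. The approach I would take is the valuation-theoretic form of Hensel's lemma for the $(p)$-adically complete domain $\Z_p\llbracket x \rrbracket$: as $Q_y(x,a_0)$ is a nonzerodivisor, it is enough to verify $Q(x,a_0) \in (p)\cdot Q_y(x,a_0)^2$, after which the quadratically convergent iteration $a_{k+1} = a_k - Q(x,a_k)/Q_y(x,a_k)$ is well defined and converges $(p)$-adically to the required $F^{(1)}$. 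To check that membership one uses Weierstrass preparation in $\Z_p\llbracket x \rrbracket$ to write $Q_y(x,a_0)$ --- whose reduction $g_0(x)$ equals $x^e$ times a unit, with $e = \mathrm{ord}_x P_y(x,F(x))$ --- as a unit times a distinguished polynomial of degree $e$, and then controls $\mathrm{ord}_x$ of $Q(x,a_0)/p$ against $e$ using the special shape of $P$ (in our use, a polynomial in Ore form, so that $Q_y(x,y)$ differs from $g_0(x)$ only by a multiple of $p$). This last bookkeeping is the step I expect to be most delicate, and it is where the full strength of the Ore form is needed, not merely the hypothesis $P_y(x,F(x))\neq 0$.
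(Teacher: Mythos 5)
The paper does not actually prove this lemma; it cites Denef and Lipshitz~\cite[Remark~2.2]{Denef--Lipshitz}, so there is no in-paper argument to compare against. Your proposal nevertheless contains a genuine gap. The unit-derivative case is handled correctly. But the statement only assumes $P_y(x,F(x))\neq 0$ in $\F_p\llbracket x\rrbracket$, and for the case $\mathrm{ord}_x P_y(x,F(x))>0$ you explicitly concede that your argument relies on $P$ being in Ore form, which is not a hypothesis of the lemma. Even granting Ore form, the proof is not closed: the entire difficulty is concentrated in the step you leave unverified, namely that one can arrange $Q(x,a_0)\in (p)\,Q_y(x,a_0)^2$. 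After replacing $a_0=\iota(F)$ by $\iota(F)+pb$ for a suitable $b$, this comes down to showing that $P_y(x,F)$ divides $\bigl(\iota(P)(x,\iota(F))/p\bigr)\bmod p$ in $\F_p\llbracket x\rrbracket$. That quantity records the carries produced by the $p$-adic arithmetic, so bounding its $x$-order against $e=\mathrm{ord}_x P_y(x,F)$ is not a matter of applying Weierstrass preparation to $Q_y$ --- it needs a separate argument which you do not supply.

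You should consult the cited remark in Denef--Lipshitz and reconstruct the argument from there. One standard route that sidesteps the divisibility issue entirely is to pass to the $p$-adic completion of $\Z_p\llbracket x\rrbracket_{(p)}$, a complete discrete valuation ring with residue field $\F_p((x))$; there $P_y(x,F)$ becomes a unit and the elementary Hensel lemma applies at once, producing a root congruent to $\iota(F)$ modulo $p$. The real work then shifts to showing that this root lies in $\Z_p\llbracket x\rrbracket$ and not merely in the larger ring, which is the non-formal part of the lemma. Neither that step nor the divisibility above is addressed in your proposal.
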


Furthermore, Denef and Lipshitz~\cite[Lemma~3.4]{Denef--Lipshitz} proved that for any $\alpha$, the lifting of an algebraic series $F(x) \in \F_p\llbracket x \rrbracket$ into $\Z_p\llbracket x \rrbracket$ can be approximated modulo $p^\alpha$ by an algebraic series in $\Z_p\llbracket x \rrbracket$. The proof in Proposition~\ref{explicit_kernel}, below, of this result is essentially the same as theirs, although  we work with a different linear system. We begin with a lemma.

\begin{lemma}\label{Z_lemma}
Suppose $F(x) \in \F_p\llbracket x \rrbracket$ is algebraic.  Let $d = | \ker_p(F)|$, and let $f=\iota(F)$.  Then there exist polynomials $\ol b_0(x), \ldots ,\ol b_d(x)$  in $\Z[x]$ such that
\[
	\sum_{i=0}^{d} \ol b_i(x) f(x^{p^i}) = 0.
\]
\end{lemma}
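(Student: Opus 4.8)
The plan is to exploit the fact that the $p$-kernel of $F$ spans a $d$-dimensional $\F_p$-vector space inside $\F_p\llbracket x \rrbracket$, and to transport this linear dependence up to $\Z$ via the injection $\iota$. First I would recall from the discussion following Theorem~\ref{Eilenberg} (combined with Theorem~\ref{Christol}) that the $p$-kernel $\ker_p(F)$ is finite, say of cardinality $d$, and that its elements are obtained from $F$ by iterated Cartier operators. The key structural observation is the identity noted just before Theorem~\ref{Eilenberg}: for $G(x) \in \F_p\llbracket x \rrbracket$ we have $G(x) = \sum_{e=0}^{p-1} x^e (\Lambda_e G(x))^p$, and more generally every element of $\F_p\llbracket x \rrbracket$ can be written as a polynomial (with polynomial coefficients) in the $p$-th powers of the kernel elements of $F$. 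Concretely, since the $\F_p$-span $V$ of $\ker_p(F)$ is finite-dimensional and closed under each $\Lambda_e$ (because $\Lambda_e$ commutes with the kernel-generating operations), applying $\Lambda_e$ to a basis and re-expanding shows that each $F(x^{p^i})$, for $i = 0, 1, \dots, d$, lies in a fixed finite-dimensional $\F_p[x]$-module; I want a linear dependence among $F(x), F(x^p), \dots, F(x^{p^d})$ over $\F_p[x]$.

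The cleanest route: the Frobenius twists $F(x), F(x^p), F(x^{p^2}), \dots$ all satisfy the same kind of algebraic relation, and Ore's lemma (Lemma~\ref{ore}) already guarantees a polynomial $P^*(x,y) = g_0(x) y + g_1(x) y^p + \cdots + g_m(x) y^{p^m}$ with $P^*(x, F(x)) = 0$ and $g_0 \neq 0$. Writing $y^{p^j}$ as $F(x)^{p^j} \equiv F(x^{p^j}) \bmod p$ (valid in $\F_p\llbracket x \rrbracket$), this is literally a relation $\sum_{j=0}^m g_j(x) F(x^{p^j}) = 0$ in $\F_p\llbracket x \rrbracket$ with $g_0 \neq 0$. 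The one thing to pin down is that $m$ can be taken to be at most $d = |\ker_p(F)|$; this follows because the $F(x^{p^j})$ lie in the $(\le d)$-dimensional module generated over $\F_p[x]$ by a basis of $V$ (using that $V$ is $\Lambda$-stable), so at most $d+1$ of them can be $\F_p(x)$-linearly independent, and clearing denominators gives coefficients in $\F_p[x]$. If this bound is not immediate from Ore's construction, I would instead argue directly: let $B_1, \dots, B_d$ be an $\F_p$-basis of $V$; each $F(x^{p^i})$ can be written (via the identity $G = \sum_e x^e (\Lambda_e G)^p$ applied iteratively, which expresses $G$ through $p$-th powers of kernel elements, combined with Frobenius) as an $\F_p[x]$-linear combination of $B_1^{p^{?}}, \dots$, landing all $d+1$ twists in a common $(d+1)$-dimensional-at-most $\F_p(x)$-space — forcing a nontrivial $\F_p(x)$-relation, hence after clearing denominators an $\F_p[x]$-relation $\sum_{i=0}^d \bar g_i(x) F(x^{p^i}) = 0$.

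Finally I would lift coefficient-by-coefficient: choose any polynomials $\bar b_i(x) \in \Z[x]$ with $\bar b_i(x) \equiv \bar g_i(x) \bmod p$ (e.g. lift each $\F_p$-coefficient via $\iota$ to $\{0,1,\dots,p-1\}$). Since $f = \iota(F)$, we have $\pi(f(x^{p^i})) = F(x^{p^i})$, so $\pi\!\left(\sum_{i=0}^d \bar b_i(x) f(x^{p^i})\right) = \sum_{i=0}^d \bar g_i(x) F(x^{p^i}) = 0$ in $\F_p\llbracket x \rrbracket$. Hmm — this only gives the sum $\equiv 0 \bmod p$, not $= 0$ in $\Z[x]$, so the statement as phrased needs either (a) the coefficients to be chosen so the lift is exact, which is not generally possible, or (b) the claim is really that the $\bar b_i$ are lifts making the relation hold modulo $p$, which is what the proof of the surrounding Proposition~\ref{explicit_kernel} will actually use. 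I expect the authors intend $\bar b_i \in \Z[x]$ with the relation holding in $\F_p\llbracket x \rrbracket$ (the "$=0$" being read mod $p$, or $f$ being replaced by its mod-$p$ reduction $F$ in the final display); the honest lemma to prove is the $\F_p[x]$-relation with the bound $d$, and the main obstacle is precisely establishing that bound $\le d = |\ker_p(F)|$ rather than just the unbounded existence from Ore's lemma — i.e. carefully tracking that the Frobenius twists $F(x^{p^i})$ stay inside the $d$-dimensional $\Lambda$-stable space spanned by $\ker_p(F)$.
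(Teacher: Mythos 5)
Your proposal has a genuine gap, which you yourself spot in the last paragraph: by routing through Ore's lemma and the Frobenius identity $F(x)^p = F(x^p)$ (which only holds modulo $p$), you only obtain a relation $\sum_i \bar b_i(x) f(x^{p^i}) \equiv 0 \bmod p$, not an exact equality in $\Z\llbracket x\rrbracket$. You then guess that the lemma must be intended modulo $p$, but that is not the case: the exact $\Z[x]$-relation is precisely what the surrounding proof of Proposition~\ref{explicit_kernel} needs, because it must divide the $\bar b_i$ by powers of $p$ to obtain a nonzero relation modulo $p^\alpha$ for $\alpha > 1$. A relation that is only known modulo $p$ cannot be renormalized in this way.

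The idea you are missing is to avoid Frobenius entirely and work with the Cartier decomposition, which is exact over $\Z$. Because $\iota$ commutes coefficient-wise with every $\Lambda_j$, the lift $f = \iota(F)$ has $\ker_p(f) = \iota(\ker_p(F)) = \{f_1,\dots,f_d\}$ with $f_1 = f$. Now the identity
\[
	f_i(x) = \sum_{j=0}^{p-1} x^j \, \Lambda_j(f_i)(x^p)
\]
holds on the nose in $\Z\llbracket x\rrbracket$ (it is just the partition of monomials by exponent residue; no Frobenius, no reduction mod $p$). Since each $\Lambda_j(f_i)$ is again some $f_{\sigma(i,j)}$, this says each $f_i(x)$ is an exact $\Z[x]$-combination of $f_1(x^p),\dots,f_d(x^p)$. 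Substituting $x \mapsto x^{p^\ell}$ and iterating, all of $f_1(x), f_1(x^p), \dots, f_1(x^{p^d})$ land in the $\Z[x]$-module generated by the $d$ elements $f_1(x^{p^d}),\dots,f_d(x^{p^d})$. These $d+1$ coefficient vectors in $\Z(x)^d$ are linearly dependent over $\Z(x)$; clearing denominators yields $\bar b_0,\dots,\bar b_d \in \Z[x]$, not all zero, with $\sum_{i=0}^d \bar b_i(x) f(x^{p^i}) = 0$ exactly. Your bound-by-$d$ instinct was right, but carried out in the wrong ring; and your secondary worry about obstacle (a) --- that exact lifting is ``not generally possible'' --- dissolves once you notice that the decomposition you should be lifting never uses Frobenius in the first place.
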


\begin{proof}
Since $F(x)$ is algebraic,
then by Theorem~\ref{Christol} (Christol's theorem), we can compute $\ker_p(F) = \{F_1, \ldots, F_d \}$, where $F_1 = F$; this also implies  that 
$\ker_p(f) = \{f_1, \ldots, f_d \}$, where $f_i \colonequal \iota(F_i)$, so that $f_1=f$.
   
   We now retrace some of the steps in the proof of Theorem~\ref{Christol} to obtain a certain linear relationship. Writing each kernel element $f_i(x)= \sum_{j=0}^{p-1} x^j \lam_j (f_i)(x^p)$, and noting that each $\lam_j(f_i)$ is again an element in $\{f_1, \ldots, f_d \}$, we conclude that for $1\leq i \leq d$,  $f_i(x)$ is in the $\Z[x]$-linear span  of $\{ f_1(x^p), \ldots, f_d (x^p)\}$.
Replacing $x$ with $x^{p^l}$, we can conclude that for each $l$, and for each $i$,  $f_i(x^{p^{l}})$ is in the $\Z[x]$-linear span  of $\{ f_1(x^{p^{l+1}}), \ldots, f_d (x^{p^{l+1}})\}$. This implies that  for $0\leq j \leq d$, each $f_1(x^{p^j})$ lives in $M$, the $\Z[x]$-submodule of $\Z\llbracket x \rrbracket$
  spanned by $\{ f_i(x^{p^{d}}): 1\leq i \leq d \}$.   For each $j\in \{0, \ldots, d\}$, let the polynomials  $a_{1,j}(x), \ldots, a_{d,j}(x)\in \Z[x]$ be the coefficients in
   $f_1(x^{p^j})= \sum_{i=1}^d a_{i,j} (x)\, f_i(x^{p^{d}})$. We solve this linear system, describing $f_1(x) , f_1(x^{p})  \ldots , f_1(x^{p^d})$ in terms of $f_1(x^{p^d}), \ldots,    f_d(x^{p^d})$,
  over $\Z ( x )$ to get coefficients $\{\wt b_0(x), \ldots, \wt b_{d}(x)\}$ in $\Z( x ) $ such that
   $\sum_{i=0}^{d} \wt b_i(x) f_1(x^{p^{i}})     = 0$. We then multiply by a common denominator to get coefficients $\{\ol {b_0}(x), \ldots, \ol {b_{d}}(x)\}$ in $\Z[x] $ such that
   $\sum_{i=0}^{d} \ol{b_i}(x)  f_1(x^{p^{i}})  = 0$. 
\end{proof}

\begin{proposition}\label{explicit_kernel}
Suppose that $F(x) \in \F_p\llbracket x \rrbracket$ and $P(x,y) \in \F_p[x,y]$ such that $P(x, F(x)) = 0$, and let $\alpha \geq 1$.
Then there exists an algebraic $F^{(\alpha)}(x) \,\in \Z_p\llbracket x \rrbracket$ such that $\iota(F(x)) \equiv F^{(\alpha)} (x) \mod p^\alpha$.
Furthermore, a polynomial $P^{(\alpha)}(x,y) \in \Z[x,y]$ such that $P^{(\alpha)}(x,F^{(\alpha)}(x))= 0$ can be explicitly computed.
\end{proposition}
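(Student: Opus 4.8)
I would prove the proposition by induction on $\alpha$, peeling off one $p$-adic digit at each step, with the linear relation of Lemma~\ref{Z_lemma} supplying the algebraicity needed at the next step. Throughout, to \emph{lift} a series $H \in \F_p\llbracket x \rrbracket$ that is algebraic over $\F_p(x)$ — say $Q(x, H(x)) = 0$ for a computable $Q \in \F_p[x,y]$ — to an algebraic series over $\Z_p$, I would put $Q$ into Ore form $Q^*$ via Lemma~\ref{ore}, so that $Q^*_y(x,y)|_{y = H(x)} = g_0(x) \neq 0$, and then invoke Lemma~\ref{Hensel} to obtain $H^\uparrow \in \Z_p\llbracket x \rrbracket$ with $\iota(Q^*)(x, H^\uparrow(x)) = 0$ and $\pi(H^\uparrow) = H$; identifying $\{0, \dots, p-1\}$ with the corresponding integers, $\iota(Q^*) \in \Z[x,y]$.

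Begin the induction at $\alpha = 0$ with $F^{(0)} = 0$, $P^{(0)}(x,y) = y$. For the inductive step, suppose an algebraic $F^{(\alpha)} \in \Z_p\llbracket x \rrbracket$ and a known $P^{(\alpha)} \in \Z[x,y]$ are in hand with $P^{(\alpha)}(x, F^{(\alpha)}(x)) = 0$ and $F^{(\alpha)}(x) \equiv \iota(F(x)) \bmod p^\alpha$; write $f \colonequal \iota(F)$. Then $g \colonequal (f - F^{(\alpha)})/p^\alpha \in \Z_p\llbracket x \rrbracket$, and the whole problem reduces to showing that $\ol g \colonequal \pi(g)$ is algebraic over $\F_p(x)$ by a computable polynomial. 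Granting that, lift $\ol g$ to $g^\uparrow \in \Z_p\llbracket x \rrbracket$ as above and set $F^{(\alpha+1)} \colonequal F^{(\alpha)} + p^\alpha g^\uparrow$. Since $g^\uparrow \equiv g \bmod p$, we get $F^{(\alpha+1)} \equiv F^{(\alpha)} + p^\alpha g = f \bmod p^{\alpha+1}$; $F^{(\alpha+1)}$ is algebraic because algebraic power series form a ring; and $P^{(\alpha+1)} \in \Z[x,y]$ is produced by eliminating $y_1$ via a resultant from $P^{(\alpha)}(x, y_1) = 0$ and the denominator-cleared form of the polynomial for $g^\uparrow$ evaluated at $y_2 = (y - y_1)/p^\alpha$.

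It remains to establish that $\ol g$ is algebraic over $\F_p(x)$. For $\alpha = 0$ this is immediate, since $\ol g = F$ is algebraic by hypothesis. For $\alpha \geq 1$ I would use Lemma~\ref{Z_lemma}: take a relation $\sum_{i=0}^{d} \ol b_i(x)\, f(x^{p^i}) = 0$ with $\ol b_i \in \Z[x]$, normalized so the $\ol b_i$ are not all divisible by $p$, substitute $f = F^{(\alpha)} + p^\alpha g$, and use that $x \mapsto x^{p^i}$ preserves congruences modulo $p^\alpha$. This gives
\[
	\sum_{i=0}^{d} \ol b_i(x)\, g(x^{p^i}) \;=\; S(x), \qquad S(x) \colonequal -\,p^{-\alpha}\sum_{i=0}^{d} \ol b_i(x)\, F^{(\alpha)}(x^{p^i}),
\]
where $S \in \Z_p\llbracket x \rrbracket$ because $\sum_i \ol b_i(x)\, F^{(\alpha)}(x^{p^i}) \equiv \sum_i \ol b_i(x)\, f(x^{p^i}) = 0 \bmod p^\alpha$. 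Reducing modulo $p$ and using $\ol g(x^{p^i}) = \ol g(x)^{p^i}$ in $\F_p\llbracket x \rrbracket$, we find $\sum_{i=0}^{d} (\ol b_i \bmod p)(x)\, \ol g(x)^{p^i} = \ol S(x)$ with $\ol S \colonequal \pi(S)$, so $\ol g$ is a root of $\sum_i (\ol b_i \bmod p)(x)\, y^{p^i} - \ol S(x)$ and hence algebraic over $\F_p(x)$ as soon as $\ol S$ is. But $F^{(\alpha)}$ is algebraic over $\Z_p(x)$, so by Theorem~\ref{Denef--Lipshitz_1} its reduction modulo $p^{\alpha+1}$ is $p$-automatic, and $\ol S$ is obtained from that reduction by the substitutions $x \mapsto x^{p^i}$, multiplication by the $\ol b_i(x)$, addition, and reading off the coefficient of $p^\alpha$ — operations under which $p$-automaticity is effectively preserved — whence $\ol S$ is $p$-automatic, and algebraic over $\F_p(x)$ by Theorem~\ref{Christol}, with a defining polynomial computable from its automaton; a resultant then yields one for $\ol g$.

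The main obstacle is precisely this last step: passing from $F^{(\alpha)}$, which is controlled only modulo $p^\alpha$, to enough information modulo $p^{\alpha+1}$ to pin down $\ol g$. This is the delicate point in Denef and Lipshitz's Lemma~3.4. Invoking Theorem~\ref{Denef--Lipshitz_1} here is logically harmless, since it is prior work, but somewhat unsatisfying in a paper whose aim is to make that theorem effective; the intended route — ``essentially the same as'' theirs, with the linear system of Lemma~\ref{Z_lemma} in place of their system — is presumably to argue directly from finiteness of $\ker_p(F)$: the finitely many ``Frobenius-defect'' series obtained by iterating $h \mapsto (h(x^p) - h(x)^p)/p$ on the elements of $\iota(\ker_p(F))$ span, together with their images under the Cartier operators, a finite-rank module, and reducing the exact $\Z[x]$-relation against this module produces the polynomial for $\ol g$ with no appeal to Theorem~\ref{Denef--Lipshitz_1}. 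Once $\ol g$ is in hand, the assembly of $F^{(\alpha+1)}$ and $P^{(\alpha+1)}$ is routine Ore-form, Hensel, and resultant bookkeeping.
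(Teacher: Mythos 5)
Your overall skeleton is the right one and matches the paper's: induct on $\alpha$, write $f \colonequal \iota(F) = F^{(\alpha)} + p^\alpha g$, feed the exact relation $\sum_{i=0}^d \ol{b_i}(x)\, f(x^{p^i}) = 0$ from Lemma~\ref{Z_lemma} into this decomposition, reduce modulo $p$ to get an Ore-type equation $\sum_i \pi(\ol{b_i})(x)\, \ol g(x)^{p^i} = \ol S(x)$ for $\ol g = \pi(g)$, and then assemble $F^{(\alpha+1)}$ via Ore's lemma, Hensel's lemma, and resultants. The gap is precisely at the one step you flag as ``delicate,'' namely showing that $\ol S$ is algebraic with a \emph{computable} defining polynomial, and the route you take there does not work. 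You appeal to Theorem~\ref{Denef--Lipshitz_1} to say that $F^{(\alpha)} \bmod p^{\alpha+1}$ is $p$-automatic, then extract an automaton for $\ol S$ and feed it to Christol's theorem. But Theorem~\ref{Denef--Lipshitz_1} is an existence statement, and producing the automaton for $F^{(\alpha)} \bmod p^{\alpha+1}$ effectively is exactly what Propositions~\ref{explicit_kernel}, \ref{projections_are_algebraic} and Theorem~\ref{algebraic_implies_automatic_projections} are being built to do --- applied at the very exponent $\alpha+1$ you are trying to reach. So the computability half of the claim becomes circular, not merely ``unsatisfying,'' and your sketch of an alternative via iterated Frobenius-defect series is too vague to close the hole.

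The paper's proof avoids this entirely with a much simpler observation you did not use: the series
\[
	S(x) \;=\; -\frac{1}{p^\alpha}\sum_{i=0}^d \ol{b_i}(x)\, F^{(\alpha)}(x^{p^i})
\]
is \emph{already algebraic over $\Z_p(x)$}, directly from the inductive hypothesis that $F^{(\alpha)}$ is a root of the known $P^{(\alpha)}(x,y) \in \Z[x,y]$. Each $F^{(\alpha)}(x^{p^i})$ is a root of $P^{(\alpha)}(x^{p^i}, y)$; algebraic power series over $\Z_p(x)$ form a ring and are closed under multiplication by $\Z_p[x]$ and division by a nonzero constant; and polynomials for $\Z_p[x]$-linear combinations are computable by iterated resultants. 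Hence one computes $Q^{**}(x,y) \in \Z[x,y]$ with $Q^{**}(x, S(x)) = 0$ directly from $P^{(\alpha)}$ and the $\ol{b_i}$, and then (after dividing $Q^{**}$ by the appropriate power of $p$ so its projection is nonzero) $\pi(Q^{**})$ witnesses the algebraicity of $\ol S$ over $\F_p(x)$ with no appeal to automaticity. Once you substitute this step for the Denef--Lipshitz/Christol detour, the rest of your outline --- composing with $\sum_i \pi(\ol{b_i})(x)\, y^{p^i} - \ol S(x)$ (or equivalently composing $Q^{**}(x, \sum_i b_i(x)\, y^{p^i})$), passing to Ore form, Hensel lifting $\pi(\delta)$ to $\Delta$, and setting $F^{(\alpha+1)} = F^{(\alpha)} - p^{\alpha}\Delta$ with $P^{(\alpha+1)}$ from a resultant --- coincides with the paper's argument.
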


\begin{proof}

First we let $\alpha=1$. Given $P$, from Lemma~\ref{ore} we obtain a $P^*(x,y) \in \F_p[x,y]$ in Ore form such that $F(x)$ is a root of $P^*$.
Let $P^{(1)} = \iota(P^*)$. 
Now Lemma~\ref{Hensel} implies that there is an algebraic $F^{(1)}(x) \in \Z_p\llbracket x \rrbracket$ that is a root of $P^{(1)}$,  which agrees with $\iota(F(x))$ modulo $p$.

Inductively, suppose that we have computed a polynomial $P^{(\alpha-1)}(x,y) \in \Z[x,y]$ such that $P^{(\alpha -1)}(x, F^{(\alpha-1)}(x)) = 0$ for some $F^{(\alpha -1)}$ with $\iota(F(x)) \equiv F^{(\alpha-1)} (x) \mod p^{\alpha-1}$.
To determine $F^{(\alpha )}$, define $\delta(x) \in \Z_p\llbracket x \rrbracket$ by
\begin{equation} \label{delta_def}\delta (x)\colonequal \frac{F^{(\alpha -1)} (x) -  \iota(F(x))}{p^{\alpha -1}}.\end{equation}
Our aim is to show that $\delta(x) \equiv \Delta(x) \mod p$  for some algebraic $\Delta (x) \in \Z_p\llbracket x \rrbracket$, and to compute a polynomial $Q(x,y)$ 
which has $\Delta(x)$ as a root.
 Equation \eqref{delta_def} will then imply that 
\[ \iota(F(x))\equiv F^{(\alpha -1)}(x) - p^{\alpha -1}\Delta (x) \mod p^{\alpha },\] 
so we may take $F^{(\alpha )}(x) \colonequal F^{(\alpha -1)}(x) - p^{\alpha-1}\Delta(x)$; using  $P^{(\alpha -1)}$ and $Q$, we can then compute 
a polynomial $P^{(\alpha )}(x,y) \in \Z[x,y]$ such that $P^{(\alpha )}(x,  F^{(\alpha)}(x) )= 0$.

 Let $\mathcal A = \Z/(p^{\alpha}\Z)$. Then  $\mathcal{A}[x]$  is a quotient of the integral domain $\Z[x]$. Since $F(x)$ is algebraic over $\F_p(x)$, then using the notation of Lemma~\ref{Z_lemma},  we have polynomials $\ol{b_i}(x) \in \Z[x]$ such that  
 $\sum_{i=0}^{d} \ol{b_i}(x)  f(x^{p^{i}})  = 0$.   Project $\overline b_i(x)$ to 
$b_i(x)  \in \mathcal{A}[x]$   (after dividing by sufficiently many powers of $p$ if necessary, to get a nonzero linear combination modulo $p^\alpha$), 
 so that we explicitly obtain a linear relationship

\begin{equation} \label{L}
\sum_{i=0}^{d}b_i(x) f(x^{p^i }) 
\equiv0 \mod p^{\alpha }.\end{equation}
Noting that $f = \iota(F)$, and
applying Relationship \eqref{L} to Equation \eqref{delta_def},  we obtain
\[  \sum_{i=0}^{d}b_i(x) \left(   F^{(\alpha -1)}(x^{p^{i}}) - p^{\alpha-1} \delta(x^{p^{i}})   \right)  
   \equiv 0 \mod p^{\alpha }, 
\] so that 
\[   \sum_{i=0}^{d}b_i(x)    F^{(\alpha -1)}(x^{p^{i}})   \equiv 
 p^{\alpha -1} \sum_{i=0}^{d}b_i(x)   \delta(x^{p^{i}}) 
   \mod p^{\alpha }.
\]
Since $\delta(x^{p^i}) \equiv \delta(x)^{p^i} \mod p$, we have
\[ 
\sum_{i=0}^{d}b_i(x)   F^{(\alpha -1)}(x^{p^{i}})  \equiv 
 p^{\alpha -1} \sum_{i=0}^{d}b_i(x)   \delta(x)^{p^i} 
   \mod p^{\alpha }.
\]

Since $P^{(\alpha -1)}(x, F^{(\alpha -1)}(x))=0$, we can find a $Q^{**}(x,y) \in \Z[x,y]$ such that
\[
	Q^{**}\left(x, \frac{1}{p^{\alpha - 1}} \sum_{i=0}^d b_i(x)  F^{(\alpha -1)}(x^{p^{i}})\right)=0.
\]
Thus we have that
\[Q^{**}\left(x,             \sum_{i=0}^{d}b_i(x)   \delta(x)^{p^i} 
\right) \equiv 0 \mod p.\]
Thus $\pi(\delta(x)) \in \F_p\llbracket x \rrbracket$ is a root of   $\pi (Q^*(x,y))$ for some $Q^*(x,y)$. We put $\pi(Q^*)$  in Ore form using Lemma~\ref{ore} to get a polynomial
$Q(x,y) \in \F_p[x,y]$ such that $Q(x, \pi(\delta(x)))=0$. Using Hensel's lemma, we lift $\pi(\delta(x))$ to $\Delta(x) \in \Z_p\llbracket x \rrbracket$, a root of $\iota(Q)$. We have $\Delta(x) \equiv \delta(x) \mod p$, and the proof of the proposition is now complete.
\end{proof}

In this proof we have used the fact that algebraic power series form a ring.
Given polynomials satisfied by two algebraic power series, polynomials satisfied by their sum and their product can be computed using resultants.
Such polynomials may not be irreducible and so may have unnecessarily high degree.
We employ a standard symbolic--numeric technique to keep computations involving multiple resultants manageable.
Given two power series we would like to add or multiply, compute the first $N$ terms of each series for some $N$ (for example, $N = 64$).
Then add or multiply these truncated series.
Use a resultant to compute a polynomial for which the sum or product of the full series is a root.
Then factor this polynomial, and evaluate each factor, up to $O(x^N)$, at the truncated sum or product.
If there is only one factor that evaluates to $0 + O(x^N)$, then use this factor instead of the full polynomial.

For $a_n \in \Z_p$, let $a_n(i)$ be the $i$th base-$p$ digit of $a_n$, so that $a_n = a_n(0) p^0 + a_n(1) p^1 + a_n(2) p^2 + \cdots$.
Using Proposition~\ref{explicit_kernel}, Denef and Lipshitz showed that if $f(x) \in \Z_p\llbracket x \rrbracket$ is algebraic, then projecting the coefficients of $f(x)$ to their $i$th digits yields an algebraic series over $\F_p(x)$.

\begin{proposition}[Denef--Lipshitz {\cite[Proposition 3.5]{Denef--Lipshitz}}] \label{projections_are_algebraic}
Suppose that $f(x) = \sum_{n \geq 0} a_n x^n \in \Z_p\llbracket x \rrbracket$ is algebraic, where we are given $P(x,y) \in \Z[x,y]$ such that $P(x,f(x))=0$. Then for each $i$, $f_i(x) \colonequal \sum_{n \geq 0} a_n(i) x^n \in \F_p\llbracket x \rrbracket$ is algebraic, and a polynomial $R_i(x,y) \in \F_p[x,y]$ can be computed such that $R_i(x,f_i(x))=0$.
\end{proposition}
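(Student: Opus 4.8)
The plan is to induct on the digit index $i$, peeling off one base-$p$ digit of $f$ at a time: Proposition~\ref{explicit_kernel} will supply the algebraic approximations needed for the peeling, and resultants will supply the ring operations on algebraic power series. A routine fact handles the base case: if $h(x)\in\Z_p\llbracket x\rrbracket$ is algebraic and $Q(x,y)\in\Z[x,y]$ is a given polynomial with $Q(x,h(x))=0$, then $\pi(h)(x)=h_0(x)$ is algebraic over $\F_p(x)$ with a computable polynomial. Indeed, divide $Q$ by the highest power of $p$ dividing all its coefficients to obtain a primitive $Q'\in\Z[x,y]$; then $Q'(x,h(x))=0$ still holds, so $\pi(Q')(x,\pi(h)(x))=\pi(Q'(x,h(x)))=0$, and $\pi(Q')\neq 0$ because $Q'$ is primitive. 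Applying this with $h=f$ and $Q=P$ settles the case $i=0$, with $R_0=\pi(P')$.

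For the inductive step, fix $i\geq 1$ and assume the proposition for every algebraic series in $\Z_p\llbracket x\rrbracket$ (with a given polynomial over $\Z[x,y]$) at all digit indices $<i$. Starting from $f$ with $P(x,f(x))=0$, the case $i=0$ gives $f_0\in\F_p\llbracket x\rrbracket$ algebraic with a computable $R_0\in\F_p[x,y]$. Apply Proposition~\ref{explicit_kernel} with $F=f_0$, the polynomial $R_0$, and $\alpha=i+1$: this produces an algebraic $F_0^{(i+1)}(x)\in\Z_p\llbracket x\rrbracket$ with $\iota(f_0)\equiv F_0^{(i+1)}\bmod p^{i+1}$, together with a computable $P^{(i+1)}(x,y)\in\Z[x,y]$ such that $P^{(i+1)}(x,F_0^{(i+1)}(x))=0$. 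Since $f\equiv\iota(f_0)\equiv F_0^{(i+1)}\bmod p$, the series
\[
	g(x)\colonequal\frac{f(x)-F_0^{(i+1)}(x)}{p}
\]
has coefficients in $\Z_p$. The algebraic elements of $\Z_p\llbracket x\rrbracket$ over $\Z_p(x)$ form a ring, so $g$ is algebraic, and a polynomial in $\Z[x,y]$ annihilating $g$ can be computed from $P$ and $P^{(i+1)}$ via resultants (take a polynomial annihilating $f-F_0^{(i+1)}$, then substitute $y\mapsto p\,y$ and, if desired, make the result primitive over $\Z$).

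The crux is the claim $g_{i-1}(x)=f_i(x)$. From $f=F_0^{(i+1)}+p\,g$ and $F_0^{(i+1)}\equiv\iota(f_0)\bmod p^{i+1}$ we get $f\equiv\iota(f_0)+p\,g\bmod p^{i+1}$. Write $c_n\in\Z_p$ for the coefficient of $x^n$ in $g$, and recall that the coefficient of $x^n$ in $\iota(f_0)$ is the $0$th base-$p$ digit $a_n(0)$ of $a_n$. Comparing coefficients of $x^n$ yields $p\,c_n\equiv a_n-a_n(0)=\sum_{j\geq 1}a_n(j)p^{j}\bmod p^{i+1}$, hence $c_n\equiv\sum_{j=1}^{i}a_n(j)p^{j-1}\bmod p^{i}$. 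Thus the base-$p$ digits $0,1,\dots,i-1$ of $c_n$ are $a_n(1),\dots,a_n(i)$; in particular the $(i-1)$st digit of $c_n$ is $a_n(i)$, so $g_{i-1}(x)=\sum_{n\geq 0}a_n(i)x^{n}=f_i(x)$. Applying the inductive hypothesis to $g$ at digit index $i-1$ produces an algebraic series $g_{i-1}$ over $\F_p(x)$ and a computable $R_i(x,y)\in\F_p[x,y]$ with $R_i(x,g_{i-1}(x))=0$; since $g_{i-1}=f_i$, this $R_i$ is the desired polynomial.

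The main obstacle — beyond invoking Proposition~\ref{explicit_kernel}, which carries the genuine constructive content — is arranging the recursion so that it closes: one must approximate $f_0$ not merely modulo $p$ or $p^2$ but modulo $p^{i+1}$, so that the tail $g=(f-F_0^{(i+1)})/p$ has its first $i$ digits equal to $a_n(1),\dots,a_n(i)$. This is exactly what lets us identify the $i$th digit of $f$ with the $(i-1)$st digit of $g$ and recurse at a strictly smaller digit index. One should also verify that every auxiliary polynomial can be kept over $\Z[x,y]$ (resultants of integer polynomials, the substitution $y\mapsto p\,y$, and clearing $p$-content), so that the inductive hypothesis genuinely applies to $g$; I expect this bookkeeping, rather than any single step, to be the only delicate point.
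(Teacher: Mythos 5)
Your proof is correct and rests on the same two ingredients the paper uses: Proposition~\ref{explicit_kernel} for lifting an $\F_p$-algebraic series to a $\Z_p$-algebraic approximation, and resultants for the ring operations on algebraic series (the paper just says ``we can use these polynomials to compute a polynomial $P_i$''). What differs is how the induction is organized. The paper fixes $f$, writes
\[
	f_i = \pi\left(\frac{f - \sum_{j=0}^{i-1} p^j \iota(f_j)}{p^i}\right),
\]
and at step $i$ invokes Proposition~\ref{explicit_kernel} separately for each of $f_0,\dots,f_{i-1}$, at precisions $p^{i+1},p^i,\dots,p^2$ respectively, before assembling. You instead approximate only $\iota(f_0)$, to the single (highest) precision $p^{i+1}$, fold the remaining digits into the tail $g=(f-F_0^{(i+1)})/p$, check $g_{i-1}=f_i$, and recurse on $g$ at index $i-1$; the cost is that your induction hypothesis must range over all algebraic series with a given polynomial, not just the fixed $f$. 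Unwound, the two recursions compute the same chain of approximations (since $(g^{(1)})_0=f_1$, $(g^{(2)})_0=f_2$, etc.), so the difference is one of bookkeeping rather than substance. Your arrangement does make the precision requirement transparent and applies Proposition~\ref{explicit_kernel} once per level, which is close in spirit to the streamlining the paper later performs in passing from Algorithm~\ref{slow lifting algorithm} to Algorithm~\ref{lifting algorithm}.
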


\begin{proof}
The constructive nature of this proposition is clear in the proof of  Proposition 3.5 in \cite{Denef--Lipshitz}; we reiterate their inductive proof. Note that 
if $f$ is a root of $P$, then $f_0$ is a root of $\pi ( P )$. 
(The only thing to note is that we need $\pi ( P ) \neq 0$. If the coefficients of $P$ are all divisible by $p^k$, we divide $P$ by $p^k$. In this way we can assume that the projection of $P$ is nonzero modulo $p$.)

 If $i\geq1$,
\[ f_i = \pi\left(\frac{f -  \sum_{j=0}^{i-1} p^j\iota(f_j)}{p^i}\right). \]
Assuming that we have shown that $f_0, f_1, \ldots, f_{i-1}$ are algebraic, then
by Proposition~\ref{explicit_kernel},  for each $j \in \{0, \ldots, i-1 \}$ there exists $F_j^{(i+1-j)} \in \Z_p\llbracket x \rrbracket $ such that 
$F_j^{(i+1-j)}\equiv \iota(f_j) \mod p^{i+1-j}$. This means that 
\[ f_i = \pi\left(\frac{f -  \sum_{j=0}^{i-1} p^j  F_j^{(i+1-j)}}{p^i}\right), \]
i.e.\ $f_i$ is algebraic. Furthermore 
Proposition~\ref{explicit_kernel} tells us that for $j \in \{0, \ldots, i-1 \}$  we can find polynomials 
$P_j^{(i+1-j)} $ such that $P_j^{(i+1-j)} (x, F_j^{(i+1-j)})=0$; we can use these polynomials to compute a polynomial $P_i$ such that $P_i(x,f_i(x))=0$.
\end{proof}

Both Propositions~\ref{explicit_kernel} and 
\ref{projections_are_algebraic} generalize to multivariate series; we have confined ourselves to univariate series for notational simplicity.  

Suppose that $f  =  \sum_{n \geq 0} a_n x^n \in \Z_p\llbracket x \rrbracket$ is algebraic. Proposition~\ref{projections_are_algebraic} tells us that for each $i$,  $f_i =  \sum_{n \geq 0} a_n(i) x^n \in \F_p\llbracket x \rrbracket$ is the root of a computable polynomial. The constructive proofs of Theorems~\ref{Christol} and \ref{Eilenberg} allow us to explicitly describe the $p$-kernel of $f_i$.
By computing an automaton for the coefficients of each series $f_0, \ldots, f_{\alpha-1}$ and taking the direct product of these automata, we obtain an automaton which computes the $n$th coefficient of each series simultaneously.
Therefore we have the following theorem.

\begin{theorem}[Denef--Lipshitz {\cite[Theorem 3.1(i)]{Denef--Lipshitz}}] \label{algebraic_implies_automatic_projections}
Suppose that the power series $f(x_1,\ldots, x_k) \in \Z_p\llbracket x_1, \ldots, x_k \rrbracket$ is algebraic over $\Z_p(x_1, \ldots, x_k)$, where we are given a polynomial $P(x_1, \ldots, x_k,y) \in \Z[x_1, \ldots, x_k,y]$ such that $P(x_1,\ldots,x_k,f(x))=0$. Then for each $\alpha$, the coefficient sequence of $f \bmod p^\alpha$ is $p$-automatic.
\end{theorem}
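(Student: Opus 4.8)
The plan is to assemble the automaton for $f \bmod p^\alpha$ from $\alpha$ smaller automata, one per base-$p$ digit of the coefficients, and then take their direct product. Write $f(x_1, \ldots, x_k) = \sum_{n_1, \ldots, n_k \geq 0} a_{n_1, \ldots, n_k} x_1^{n_1} \cdots x_k^{n_k}$ with each $a_{n_1, \ldots, n_k} \in \Z_p$, and let $a_{n_1, \ldots, n_k}(i)$ denote its $i$th base-$p$ digit, so that $a_{n_1, \ldots, n_k} \equiv \sum_{i=0}^{\alpha-1} a_{n_1, \ldots, n_k}(i)\, p^i \mod p^\alpha$. Thus the residue of $a_{n_1, \ldots, n_k}$ modulo $p^\alpha$ is determined by the finitely many digit series $f_0, \ldots, f_{\alpha-1}$, where $f_i$ is the generating function with coefficients $a_{n_1, \ldots, n_k}(i)$.

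First I would invoke the multivariate generalization of Proposition~\ref{projections_are_algebraic} --- noted after its proof to follow from the same argument, using the multivariate versions of Propositions~\ref{explicit_kernel} and, ultimately, \ref{projections_are_algebraic} --- to conclude that for each $i \in \{0, 1, \ldots, \alpha-1\}$ the digit series $f_i(x_1, \ldots, x_k) \in \F_p\llbracket x_1, \ldots, x_k \rrbracket$ is algebraic over $\F_p(x_1, \ldots, x_k)$, and that a polynomial $R_i$ with $R_i(x_1, \ldots, x_k, f_i) = 0$ can be explicitly computed from the given $P$. Next, by the multidimensional version of Christol's theorem (Theorem~\ref{Christol}, proved in full generality in \cite{Salon}), the coefficient sequence of each $f_i$ is $p$-automatic; via the constructive proofs of Theorems~\ref{Christol} and \ref{Eilenberg}, the finite $p$-kernel of $f_i$ can be read off from $R_i$, yielding an explicit $p$-DFAO $\mathcal{M}_i$ with output alphabet $\F_p$ that computes $a_{n_1, \ldots, n_k}(i)$ when fed the base-$p$ representations of $n_1, \ldots, n_k$ in parallel.

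Finally I would form the direct product $\mathcal{M}_0 \times \cdots \times \mathcal{M}_{\alpha-1}$: its state set is the Cartesian product of the state sets $\mathcal{S}_i$, its transition function acts componentwise, and on any input it simultaneously simulates all the $\mathcal{M}_i$. Composing its output map with the coding $(r_0, \ldots, r_{\alpha-1}) \mapsto \sum_{i=0}^{\alpha-1} r_i p^i \in \Z/(p^\alpha\Z)$ produces a $p$-DFAO that outputs $a_{n_1, \ldots, n_k} \bmod p^\alpha$. Since $\alpha$ is fixed and each $\mathcal{M}_i$ has finitely many states, this product automaton is finite, establishing the claim.

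I do not expect a genuine obstacle: the substantive work is already carried out in Propositions~\ref{explicit_kernel} and \ref{projections_are_algebraic}. The one point requiring care is that $\alpha$ is finite, so only finitely many digit automata are combined --- attempting to encode the full $p$-adic value of $a_{n_1, \ldots, n_k}$ would produce an infinite product. A secondary bookkeeping issue is confirming that the multivariate analogues of the cited univariate statements really do go through verbatim, which is exactly what the remark following Proposition~\ref{projections_are_algebraic} asserts.
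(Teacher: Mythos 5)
Your proposal is correct and follows essentially the same route as the paper: invoke the multivariate version of Proposition~\ref{projections_are_algebraic} to obtain a computable polynomial for each digit series $f_i$, apply the constructive proofs of Theorems~\ref{Christol} and \ref{Eilenberg} to build an automaton $\mathcal{M}_i$ for each, and take the direct product of $\mathcal{M}_0, \ldots, \mathcal{M}_{\alpha-1}$ with a coding to reassemble the residue modulo $p^\alpha$. The only difference is that you spell out the coding map and the finiteness of the product explicitly, which the paper leaves implicit.
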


\begin{remark} \label{second estimate} If $P(x,f(x)) =0$ and $P^*(x,y) = \sum_{i=0}^{K}g_i(x) y^{p^{i}}$ is the Ore form of $P(x,y)$,
Adamczewski and Bell~\cite[Lemma 8.1]{Adamczewski--Bell--2}
gave bounds on the $x$-degree of $P^*(x,y)$. Namely, if $H = \deg_x P(x,y)$ and $K= \deg_y  P(x,y)$, then the degree of each $g_i(x)$ is at most $HK p^{K}$. Now an automaton which computes $a_n \bmod p$ can be obtained from the proof of Theorem~\ref{Christol}, for example as given in \cite[Theorem~12.2.5]{ash}. The vertices of the automaton can be injected into a space whose elements can be described using $K+1$ polynomials in $\F_p[x]$, each of whose degree is at most $(p^K -1)HK p^K$. Thus the vertex set of the automaton has size at most 
$p^{(K+1)((p^{K}-1)HKp^K +1)}$. We contrast this with the size of the automaton generated by Proposition~\ref{Furstenberg} and Theorem~\ref{diagonal closure}.
If $f(0)=0$ and $P_y(0,0) \nequiv 0 \mod p$, then $f(x) = \mathcal D \left (\frac{y P_y(xy,y)}{P(xy,y)/y}\right)$; according to Remark~\ref{first estimate}, there is an automaton computing $a_n \bmod p$ with at most $p^{(L+1)^2}$ states, where
\begin{align*}
	L &= \max\{\deg y P_y(xy,y), \deg P(xy,y)/y\} \\
	&\leq \deg_x P(x,y) + \deg_y P(x,y) \\
	&= H+K.
\end{align*}
For large $p$ these bounds suggest that the automaton obtained using Theorem~\ref{diagonal closure} is far smaller than the one obtained using the methods of this section.
This emphasizes the computational burden of using polynomials in Ore form, although these bounds may not be representative of typical automata. If $\alpha>1$, then the repeated use of Ore's lemma (at least $\alpha$ times) and the resultant (estimates of the cost of which are given in~\cite[Lemma 4.1]{Adamczewski--Bell}) make the bound on the size of an automaton for $a_n \bmod p^{\alpha}$ even larger.
\end{remark}

Given an algebraic series $f(x) \in \Z_p\llbracket x \rrbracket$, Algorithm~\ref{slow lifting algorithm} is the algorithm suggested by Propositions~\ref{explicit_kernel} and \ref{projections_are_algebraic} to compute an automaton for the coefficients of $f(x)$ modulo $p^\alpha$ (where of course we carry around each series by a polynomial).

\begin{algorithm}\label{slow lifting algorithm}
\SetArgSty{}
\DontPrintSemicolon
\KwIn{$(P(x, y), p, \alpha) \in \mathcal{\Z}[x, y] \times \mathbb{P} \times \Z_{\geq 1}$ where $P(x, f(x)) = 0$}

$f_0(x) \leftarrow \pi(f(x))$\;

Compute an automaton $\mathcal M_0$ for the coefficients of $f_0(x)$\;

\For{$1 \leq i \leq \alpha - 1$}{

	Use $\mathcal M_{i-1}$ to compute $\ol {b_0}(x), \ldots, \ol {b_{d}}(x)$ as in Lemma~\ref{Z_lemma}\;

	\For{$2 \leq j \leq \alpha - i + 1$}{

		Compute $F_{i-1}^{(j)}(x) \equiv \iota(f_{i-1}(x)) \mod p^j$ as in Proposition~\ref{explicit_kernel}\;

	}

	$f_i(x) \leftarrow \pi((f(x) - \sum_{j=0}^{i-1} p^j F_j^{(i+1-j)}(x))/p^i)$\;

	Compute an automaton $\mathcal M_i$ for the coefficients of $f_i(x)$\;

}

Compute the direct product $\mathcal M$ of $\mathcal M_0, \dots, \mathcal M_{\alpha - 1}$\;

\Return $\mathcal M$\;
\caption{Outline for computing an automaton for the coefficients of an algebraic sequence modulo $p^\alpha$.}
\end{algorithm}

However, this algorithm includes some unnecessary computations.
For example, we see
\[
	f_1(x)
	= \pi\left(\frac{f(x) - p^0 F_0^{(2)}(x)}{p}\right)
	= \pi\left(\frac{f(x) - \left(F_0^{(1)}(x) - p \Delta(x)\right)}{p}\right)
	= \pi(\Delta(x))
\]
for some algebraic $\Delta(x)$ that we compute in Proposition~\ref{explicit_kernel}.
For $\alpha = 2$, therefore, we do not in fact need to compute a polynomial for $F_0^{(2)}(x)$.
More generally, the computation can be carried out in terms of the various series $\Delta(x)$ rather than the lifted components $F_i^{(j)}(x)$.
Let $\Delta_i^{(k)}(x)$ be the series $\Delta(x)$ used to compute $F_i^{(k)}(x) \colonequal F_i^{(k - 1)}(x) - p^{k - 1} \Delta(x)$ in Proposition~\ref{explicit_kernel}.
Then observe that in the proof of Proposition~\ref{explicit_kernel}, for $\alpha = 1$ we can take $P^{(1)}=Q$ and $ F^{(1)}=f$, where $f(x) \in \Z_p\llbracket x \rrbracket$ is a root of $Q$.
Therefore for $1 \leq j \leq \alpha$ we have
\[
	F_0^{(j)} = f - \sum_{k=2}^{j} p^{k-1} \Delta_0^{(k)}.
\]
For $2 \leq i \leq \alpha - 1$ and $1 \leq j \leq \alpha - i + 1$ we have
\[
	F_{i-1}^{(j)} = \sum_{k=2}^i \Delta_{i-k}^{(k)} - \sum_{k=2}^{j} p^{k-1} \Delta_{i-1}^{(k)}.
\]
When we write $f_i$ in terms of $\Delta_j^{(k)}$, most of the terms cancel and we are left with
\[
	f_i
	= \pi\left(\frac{f -  \sum_{j=0}^{i-1} p^j  F_j^{(i+1-j)}}{p^i}\right)
	= \pi\left( \sum_{j=2}^{i+1} \Delta_{i+1-j}^{(j)} \right).
\]
The resulting algorithm is Algorithm~\ref{lifting algorithm}.

\begin{algorithm}\label{lifting algorithm}
\SetArgSty{}
\DontPrintSemicolon
\KwIn{$(P(x, y), p, \alpha) \in \mathcal{\Z}[x, y] \times \mathbb{P} \times \Z_{\geq 1}$ where $P(x, f(x)) = 0$}

$f_0(x) \leftarrow \pi(f(x))$\;

Compute an automaton $\mathcal M_0$ for the coefficients of $f_0(x)$\;

\For{$1 \leq i \leq \alpha - 1$}{

	Use $\mathcal M_{i-1}$ to compute $\ol {b_0}(x), \ldots, \ol {b_{d}}(x)$ as in Lemma~\ref{Z_lemma}\;

	\For{$2 \leq j \leq \alpha - i + 1$}{

		Compute $\Delta_{i-1}^{(j)}(x)$ as in Proposition~\ref{explicit_kernel}\;

	}

	$f_i(x) \leftarrow \pi(\sum_{j=2}^{i+1} \Delta_{i+1-j}^{(j)}(x))$\;

	Compute an automaton $\mathcal M_i$ for the coefficients of $f_i(x)$\;

}

Compute the direct product $\mathcal M$ of $\mathcal M_0, \dots, \mathcal M_{\alpha - 1}$\;

\Return $\mathcal M$\;
\caption{Outline for computing an automaton for the coefficients of an algebraic sequence modulo $p^\alpha$, using fewer operations than Algorithm~\ref{slow lifting algorithm}.}
\end{algorithm}

\subsection{Central trinomial coefficients}\label{Central trinomial coefficients}

As an example, let $(T_n)_{n \geq 0}$ be the sequence $1, 1, 3, 7, 19, 51, 141, 393, \dots$ of central trinomial coefficients~\seq{A002426}.
The generating function $z = \sum_{n \geq 0} T_n x^n$ satisfies
\[
	(x + 1) (3 x - 1) z^2 + 1 = 0.
\]
Since $T_0 \neq 0$, we might substitute $z = y + 1$ in an attempt to obtain a polynomial satisfying the conditions of Section~\ref{easy_algorithm}.
The polynomial $P(x, y)$ one obtains has $\frac{\partial P}{\partial y}(0, 0) = 2 \equiv 0 \mod 2$, so we cannot use it to compute an automaton for $T_n \bmod 2^\alpha$ using Proposition~\ref{Furstenberg} and Algorithm~\ref{fast diagonal algorithm}.
Truncating additional terms of the power series does not fix the problem.
Indeed, we have not been able to apply the method of Section~\ref{easy_algorithm} to this sequence.
Therefore we carry out Algorithm~\ref{lifting algorithm} to compute an automaton for $T_n \bmod 4$.

Let $P(x, y) = (x + 1) (3 x - 1) y^2 + 1$.
Projecting modulo $2$ shows that the series $f_0(x) = \sum_{n \geq 0} T_n(0) x^n \in \F_2\llbracket x \rrbracket$ satisfies $(x + 1)^2 y^2 + 1 = 0$.
An Ore form for $P(x, y)$ modulo $2$ is
\[
	(x + 1) y^2 + y.
\]
From this one computes an automaton $\mathcal M_0$ for $T_n \bmod 2$ using Theorem~\ref{Christol}.
This automaton is as follows, showing that $T_n \equiv 1 \mod 2$ for all $n \geq 0$.
\begin{center}
	\scalebox{.7}{\includegraphics{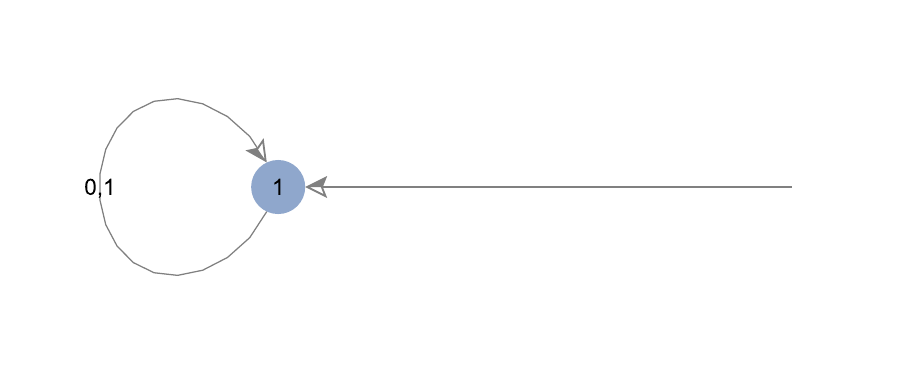}}
\end{center}

Now let $i = 1$.
Since there is a single element in the $2$-kernel of $T_n \bmod 2$, we can write $\iota(f_0(x))$ as
\[
	\iota(f_0(x)) = \iota(f_0(x^2)) + x \, \iota(f_0(x^2)).
\]
Let $j = 2$.
Define $\delta(x) \in \Z_2\llbracket x \rrbracket$ by writing $\iota(f_0(x)) = f(x) - 2 \delta(x)$.
Then
\[
	f(x) - 2 \delta(x) = f(x^2) - 2 \delta(x^2) + x \left(f(x^2) - 2 \delta(x^2)\right),
\]
so
\begin{align*}
	\frac{f(x) - f(x^2) - x f(x^2)}{2}
	&= \delta(x) - \delta(x^2) - x \delta(x^2) \\
	&\equiv \delta(x) - \delta(x)^2 - x \delta(x)^2 \mod 2.
\end{align*}
Use $P(x, y)$ to compute a polynomial $Q^{**}(x, y) \in \Z[x, y]$ such that
\[
	Q^{**}\left(x, \frac{f(x) - f(x^2) - x f(x^2)}{2}\right) = 0,
\]
keeping only one irreducible polynomial factor from each resultant.
The coefficients of $Q^{**}(x, y)$ are all divisible by $16$, so let
\[
	Q^*(x, y)
	\colonequal \pi\left(\frac{1}{16} Q^{**}(x, y - y^2 - x y^2)\right)
	= (x+1)^{16} y^8 + (x+1)^{10} y^2 + x^4
	\in \F_2[x, y].
\]
Then $Q^*(x, \pi(\delta(x))) = 0$.
Computing an Ore form for $Q^*(x, y)$ gives
\[
	(x+1)^{11} y^8 + x^2 (x+1)^3 y^4 + (x+1)^5 y^2 + x^2 y.
\]
Let $\Delta_0^{(2)}$ be a root of this polynomial that is congruent modulo $2$ to $\pi(\delta(x))$.
This concludes the loop over $j$.

Let $f_1(x) \colonequal \pi(\Delta_0^{(2)}(x))$.
We have computed a polynomial satisfied by $f_1(x) = \pi(\delta(x))$.
From this polynomial, one computes the following automaton $\mathcal M_1$ for the coefficients of $f_1(x)$, the $n$th term of which is the $2^1$ digit of $T_n$.
\begin{center}
	\scalebox{.8}{\includegraphics{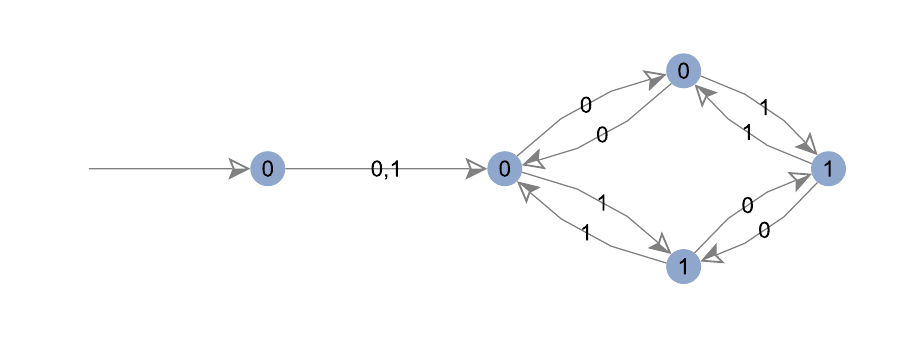}}
\end{center}
This concludes the loop over $i$.
Since $\mathcal M_0$ has only one state, the product $\mathcal M_0 \times \mathcal M_1$ is the following, simply a relabeling of the states of $\mathcal M_1$.
\begin{center}
	\scalebox{.8}{\includegraphics{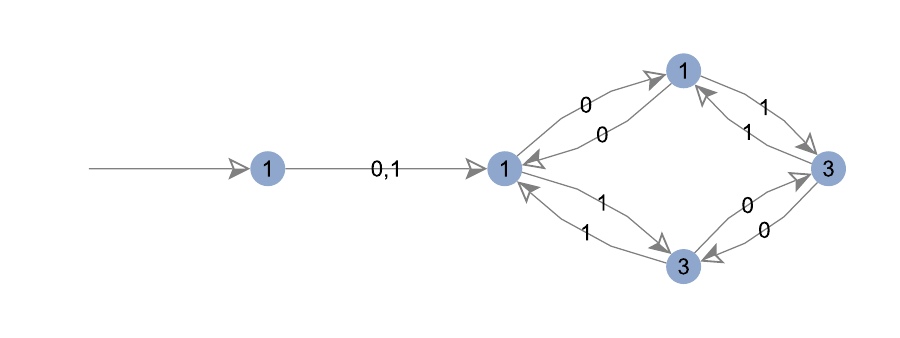}}
\end{center}

We have not been able to carry out the computation for $T_n \bmod 8$.
The next step (for $i = 1$ and $j = 3$) would be to compute $F_0^{(2)}(x) = f(x) - p \Delta_0^{(2)}(x)$.
This computation is not difficult, and $F_0^{(2)}$ is an algebraic series of degree $14$.
However, we have not been able to compute the sum $F_0^{(2)}(x) - (x + 1) F_0^{(2)}(x^2)$, which is required next.
Even if we had, however, there is still another Ore polynomial computation to undertake.

\section{Multidimensional diagonals}\label{Lucas}

\subsection{Lucas products}

Lucas' well-known theorem on binomial coefficients modulo a prime states that if $n = n(0) p^0 + n(1) p^1 + \dots + n(l) p^l$ and $m = m(0) p^0 + m(1) p^1 + \dots + m(l) p^l$ in base $p$, then
\[
	\binom{n}{m} \equiv \prod_{i=0}^l \binom{n(i)}{m(i)} \mod p.
\]
Lucas-type results are also known for the Ap\'ery numbers and other sequences, such as  the constant term of $P(x)^n$ for certain Laurent polynomials $P(x)$~\cite{Samol--vanStraten}.
Since such results hold for general $p$, they fall outside the scope of the previous sections.
In this section we combine ideas from Section~\ref{easy_algorithm} and Section~\ref{hard_algorithm} to show that Lucas products exist for a large number of sequences.

As stated, Theorem~\ref{diagonal closure} applies to the ``full'' diagonal of a rational expression, which is a univariate power series obtained by collapsing all variables into one variable.
However, it is not difficult to see that Theorem~\ref{diagonal closure} generalizes to any diagonal obtained by collapsing any subsets of variables.
That is, let $B = \{b_1, \dots, b_l\}$ be a set partition of $\{1, 2, \dots, k\}$, and define $\gamma(i) = j$ if $i \in b_j$.
Then let
\[
	\mathcal{D}_B \left(\sum_{n_1, \dots, n_k \geq 0} a_{n_1, \dots, n_k} x_1^{n_1} \cdots x_k^{n_k}\right)
	\colonequal \sum_{n_1, \dots, n_l \geq 0} a_{n_{\gamma(1)}, \dots, n_{\gamma(k)}} x_1^{n_1} \cdots x_l^{n_l}.
\]
The full diagonal $\mathcal{D}$ is the diagonal $\mathcal{D}_{\{\{1, 2, \dots, k\}\}}$ in which the set partition $B$ contains a single set.

The coefficients of $\mathcal{D}_B(f)$ form a multidimensional sequence.
As is the case for the full diagonal, this sequence is $p$-automatic.

\begin{theorem}\label{general diagonal closure}
Let $R(x_1,\ldots, x_k)$ and $Q(x_1, \ldots,x_k)$ be polynomials in $\Z_p[x_1, \ldots, x_k]$ such that $Q(0,\ldots,0) \nequiv 0 \mod p$.
Let $\alpha\geq 1$, and let $B$ be a set partition of $\{1, 2, \dots, k\}$.
Then the $|B|$-dimensional sequence of coefficients of
\[
	\mathcal{D}_B\left(\frac{R(x_1, \ldots, x_k)}{Q(x_1, \ldots,x_k)}\right) \bmod p^\alpha
\]
is $p$-automatic.
\end{theorem}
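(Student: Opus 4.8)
The plan is to run the proof of Theorem~\ref{diagonal closure} essentially verbatim, with the full diagonal $\mathcal{D}$ replaced by the block diagonal $\mathcal{D}_B$ and with the $p$-kernel criterion of Theorem~\ref{Eilenberg} replaced by its multidimensional analogue (see \cite[Chapter~14]{ash}): an $l$-dimensional sequence on a finite alphabet is $p$-automatic if and only if its $p$-kernel --- the set of subsequences obtained by applying compositions of the $l$-variable Cartier operators $\Lambda_{e_1,\dots,e_l}$ --- is finite. So it suffices to exhibit a finite collection of $l$-variable power series that contains $\mathcal{D}_B\!\left(R(x_1,\dots,x_k)/Q(x_1,\dots,x_k)\right) \bmod p^\alpha$ and is closed under all the operators $\Lambda_{e_1,\dots,e_l}$.

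The one genuinely new ingredient is how the $l$-variable Cartier operator interacts with $\mathcal{D}_B$. Writing $\gamma(i) = j$ when $i \in b_j$ as in the statement, a direct computation comparing coefficients shows that for any $g \in \Z_p\llbracket x_1,\dots,x_k \rrbracket$ and any $(e_1,\dots,e_l) \in \{0,\dots,p-1\}^l$,
\[
	\Lambda_{e_1,\dots,e_l}\bigl(\mathcal{D}_B(g)\bigr) = \mathcal{D}_B\bigl(\Lambda_{d_1,\dots,d_k}(g)\bigr), \qquad \text{where } d_i = e_{\gamma(i)}.
\]
That is, accessing the $p$-kernel of $\mathcal{D}_B(g)$ amounts to applying to $g$ exactly the $k$-variable Cartier operators whose digit tuples are constant on each block of $B$. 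Granting this, the argument of Theorem~\ref{diagonal closure} goes through unchanged: after normalizing $Q(0,\dots,0) = 1$, Proposition~\ref{Cartier extraction} lets us pull $Q(x_1,\dots,x_k)^{p^{\alpha-1}}$ out of each $\Lambda_{d_1,\dots,d_k}$; we consider the same map $\mu_{d_1,\dots,d_k}(s) = \Lambda_{d_1,\dots,d_k}\bigl(s \cdot Q^{p^\alpha - p^{\alpha-1}}\bigr) \bmod p^\alpha$ on $\mathcal{A}[x_1,\dots,x_k]$; and the same degree estimate shows that the finite set $\mathcal{S}$ of polynomials of degree at most $m$ (with the very same $m$ as in that proof) contains $R \cdot Q^{p^{\alpha-1}-1} \bmod p^\alpha$ and is closed under every $\mu_{d_1,\dots,d_k}$. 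Since
\[
	\Lambda_{e_1,\dots,e_l}\!\left(\mathcal{D}_B\!\left(\frac{s}{Q^{p^{\alpha-1}}}\right)\right) \equiv \mathcal{D}_B\!\left(\frac{\mu_{d_1,\dots,d_k}(s)}{Q^{p^{\alpha-1}}}\right) \bmod p^\alpha
\]
for $d_i = e_{\gamma(i)}$, the $p$-kernel of $\mathcal{D}_B(R/Q) \bmod p^\alpha$ injects into the finite set $\bigl\{\mathcal{D}_B\bigl(s/Q^{p^{\alpha-1}}\bigr) : s \in \mathcal{S}\bigr\}$, and the multidimensional version of Theorem~\ref{Eilenberg} finishes the proof.

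I expect the only real work to be the verification of the commutation identity above: a careful but routine bookkeeping exercise comparing, coefficient by coefficient, the effect of $\Lambda_{e_1,\dots,e_l}$ applied after collapsing the blocks against that of collapsing the blocks after applying $\Lambda_{d_1,\dots,d_k}$. The point is simply that, because variables lying in a common block of $B$ are forced to carry equal exponents in $\mathcal{D}_B(g)$, the digit peeled off of each of them must agree, which is precisely the condition $d_i = e_{\gamma(i)}$. Everything else --- the degree bound, the finiteness of $\mathcal{S}$, and the output of the associated automaton (still the constant term $s(0,\dots,0)$) --- is identical to the proof of Theorem~\ref{diagonal closure}. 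One could alternatively deduce the result by collapsing the blocks of $B$ one at a time and iterating, but the direct argument above seems cleanest.
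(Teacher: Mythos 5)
Your proof is correct and follows the paper's approach exactly: the paper explicitly states that the proof of this theorem is identical to that of Theorem~\ref{diagonal closure}, with the only change being to replace $\mu_{d,\dots,d}$ by $\mu_{d_{\gamma(1)},\dots,d_{\gamma(k)}}$. The commutation identity $\Lambda_{e_1,\dots,e_l}(\mathcal{D}_B(g)) = \mathcal{D}_B(\Lambda_{d_1,\dots,d_k}(g))$ with $d_i = e_{\gamma(i)}$ that you spell out is precisely the (implicit) justification for this substitution, so you have simply filled in a detail the paper leaves to the reader.
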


The proof of Theorem~\ref{general diagonal closure} is identical to the proof of Theorem~\ref{diagonal closure} except for the last step, where instead of restricting to $\mu_{d, \dots, d}$ one considers the more general operator $\mu_{d_{\gamma(1)}, \dots, d_{\gamma(k)}}$ for $d_1, \dots, d_l \in \{0, 1, \dots, p-1\}$.
Algorithms~\ref{slow diagonal algorithm} and \ref{fast diagonal algorithm} generalize accordingly.
We mention that Denef and Lipshitz~\cite[Theorem~5.2]{Denef--Lipshitz} prove a generalization to algebraic power series.

For example, the bivariate generating function for $\binom{n}{m}$ is rational, so Theorem~\ref{general diagonal closure} applies with the diagonal $\mathcal{D}_{\{\{1\}, \{2\}\}}$ that collapses no variables.
Therefore, for any fixed prime we can compute an automaton that outputs $\binom{n}{m} \bmod p$ when fed the base-$p$ digits of $n$ and $m$ as the sequence of pairs $(n(0), m(0)), \dots, (n(l), m(l))$.
This automaton corresponds to the Lucas product for binomial coefficients modulo $p$.

Of course, we would like to prove theorems for arbitrary $p$ when possible.
This can be done by putting the polynomial in Ore form.
A general result using this approach is as follows.

\begin{theorem}\label{Lucas product}
Let $s \geq 1$ be an integer, and let $p$ be a prime such that $p \equiv 1 \mod s$.
Let $Q = Q(x_1, \dots, x_k) \in \Z_p[x_1, \dots, x_k]$ be a polynomial such that $Q(0, \dots, 0) = 1$.
Let
\[
	f = f(x_1, \dots, x_k) = \frac{1}{Q^{1/s}} = \sum_{n_1, \dots, n_k \geq 0} a_{n_1, \dots, n_k} x_1^{n_1} \cdots x_k^{n_k} \in \Z_p\llbracket x_1, \dots, x_k \rrbracket.
\]
Let $B$ be a set partition of $\{1, 2, \dots, k\}$.
If $\Lambda_{d_{\gamma(1)}, \dots, d_{\gamma(k)}}(Q^{(p-1)/s}) \bmod p$ is a constant polynomial for each $(d_1, \dots, d_{|B|}) \in \{0, 1, \dots, p - 1\}^{|B|}$, then there is a Lucas product for the coefficients of $\mathcal{D}_B(f)$ modulo $p$.
Namely,
\[
	a_{n_{\gamma(1)}, \dots, n_{\gamma(k)}} \equiv \prod_{i=0}^l a_{n_{\gamma(1)}(i), \dots, n_{\gamma(k)}(i)} \mod p,
\]
where $l$ is the maximum length of the base-$p$ representations of $n_1, \dots, n_{|B|}$.
\end{theorem}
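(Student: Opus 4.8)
The plan is to run the proof of Theorem~\ref{diagonal closure} in the special case $\alpha = 1$, using the congruence $p \equiv 1 \bmod s$ to arrange that every Cartier operator acts on the relevant series by multiplication by a scalar in $\F_p$ --- which is precisely what a Lucas product records.

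First I would establish the key identity. Put $m = (p-1)/s \in \Z_{\geq 0}$. I claim that
\[
	f = Q^m \cdot f^p
\]
as an identity in $\Z_p\llbracket x_1, \dots, x_k \rrbracket$. Indeed, $f$ is characterized by $Q f^s = 1$ together with $f(0,\dots,0) = 1$; both $f$ and $Q^m f^p$ have constant term $1$, and raising $Q^m f^p$ to the $s$th power gives $Q^{sm} f^{sp} = Q^{p-1} (f^s)^p = Q^{p-1} Q^{-p} = Q^{-1} = f^s$. Since $p \equiv 1 \bmod s$ forces $p \nmid s$, the $s$th root of a power series with constant term $1$ is unique, so the two series coincide. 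This is the only place the hypothesis on $p$ and $s$ is used, and I expect it to be the conceptual crux; everything afterward is formal.

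Next I would apply a Cartier operator and identify the resulting scalar. Fix a tuple $(d_1, \dots, d_{|B|}) \in \{0, 1, \dots, p-1\}^{|B|}$ and apply $\Lambda_{d_{\gamma(1)}, \dots, d_{\gamma(k)}}$ to $f = Q^m f^p$. By Proposition~\ref{Cartier extraction} with $\alpha = 1$ and $g = Q^m = Q^{(p-1)/s}$,
\[
	\Lambda_{d_{\gamma(1)}, \dots, d_{\gamma(k)}}(f) \equiv \Lambda_{d_{\gamma(1)}, \dots, d_{\gamma(k)}}\!\left(Q^{(p-1)/s}\right) \cdot f \mod p,
\]
and by hypothesis the first factor on the right is a constant $c_{d_1, \dots, d_{|B|}} \in \F_p$, so $\Lambda_{d_{\gamma(1)}, \dots, d_{\gamma(k)}}(f) \equiv c_{d_1, \dots, d_{|B|}} f \bmod p$. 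One checks directly from the definitions the intertwining relation $\Lambda_{d_1, \dots, d_{|B|}} \circ \mathcal{D}_B = \mathcal{D}_B \circ \Lambda_{d_{\gamma(1)}, \dots, d_{\gamma(k)}}$, which then yields $\Lambda_{d_1, \dots, d_{|B|}}(\mathcal{D}_B(f)) \equiv c_{d_1, \dots, d_{|B|}} \mathcal{D}_B(f) \bmod p$. Comparing constant terms of both sides, and using that $Q(0,\dots,0) = 1$ gives $f(0,\dots,0) = 1$ and hence that the constant term of $\mathcal{D}_B(f)$ is $a_{0,\dots,0} = 1$, we get $c_{d_1, \dots, d_{|B|}} = a_{d_{\gamma(1)}, \dots, d_{\gamma(k)}}$.

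Finally I would iterate. Given $n_1, \dots, n_{|B|}$ with base-$p$ expansions $n_j = \sum_{i=0}^l n_j(i) p^i$ (pad with leading zeros so all have length $l+1$), the coefficient $a_{n_{\gamma(1)}, \dots, n_{\gamma(k)}}$ of $x_1^{n_1} \cdots x_{|B|}^{n_{|B|}}$ in $\mathcal{D}_B(f)$ equals the constant term of $\bigl(\Lambda_{n_1(l), \dots, n_{|B|}(l)} \circ \cdots \circ \Lambda_{n_1(0), \dots, n_{|B|}(0)}\bigr)(\mathcal{D}_B(f))$, exactly as in the digit-extraction formula recalled just before Theorem~\ref{Eilenberg}, in its evident multidimensional form. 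Since each operator $\Lambda_{n_1(i), \dots, n_{|B|}(i)}$ multiplies $\mathcal{D}_B(f)$ by the constant $c_{n_1(i), \dots, n_{|B|}(i)}$ modulo $p$ and the Cartier operators are $\F_p$-linear, the composite multiplies $\mathcal{D}_B(f)$ by $\prod_{i=0}^l c_{n_1(i), \dots, n_{|B|}(i)}$ modulo $p$; taking constant terms and substituting $c_{n_1(i), \dots, n_{|B|}(i)} = a_{n_{\gamma(1)}(i), \dots, n_{\gamma(k)}(i)}$ gives
\[
	a_{n_{\gamma(1)}, \dots, n_{\gamma(k)}} \equiv \prod_{i=0}^l a_{n_{\gamma(1)}(i), \dots, n_{\gamma(k)}(i)} \mod p,
\]
with padding by leading zeros harmless because $a_{0,\dots,0} = 1$. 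The only genuinely delicate part of the write-up is keeping the indices straight under the contraction map $\gamma$; the mathematical content is a direct specialization of the $\alpha = 1$ case of Section~\ref{easy_algorithm}.
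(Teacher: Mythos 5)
Your proof is correct and follows the same overall strategy as the paper's: derive the Ore form $f = Q^{(p-1)/s} f^p$, apply Proposition~\ref{Cartier extraction} with $\alpha = 1$ to get $\Lambda_{d_{\gamma(1)},\dots,d_{\gamma(k)}}(f) \equiv c_{d_1,\dots,d_{|B|}} f \bmod p$, and iterate. The one place you diverge is in identifying the scalar $c_{d_1,\dots,d_{|B|}}$ with the coefficient $a_{d_{\gamma(1)},\dots,d_{\gamma(k)}}$: you simply compare constant terms of both sides of $\Lambda(f) \equiv c f$, using $a_{0,\dots,0} = 1$, which is immediate. The paper instead proves this by a direct power-series comparison: writing $Q^{1/s} = 1 + g$, it argues $Q^{(p-1)/s} \equiv Q^{-1/s} + g^p Q^{-1/s} \bmod p$ and that every term of $g^p Q^{-1/s}$ has some exponent $\geq p$, so $Q^{(p-1)/s}$ and $f$ agree mod $p$ on the relevant low-degree monomials. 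Your constant-term argument is cleaner and sidesteps that bookkeeping, so this is a genuine (if small) simplification over the paper's write-up. Two minor remarks: the paper obtains $f = Q^{(p-1)/s} f^p$ by raising $1 = Qf^s$ directly to the power $(p-1)/s$ and multiplying by $f$, which avoids the uniqueness-of-$s$th-roots step you invoke (though yours is also valid since $p \nmid s$); and your intertwining relation $\Lambda_{d_1,\dots,d_{|B|}} \circ \mathcal{D}_B = \mathcal{D}_B \circ \Lambda_{d_{\gamma(1)},\dots,d_{\gamma(k)}}$ is correct and worth stating explicitly, but the paper works directly with $f$ in the $k$-variable Cartier operator and extracts the constant term at the end, which is equivalent.
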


\begin{proof}
Write
\[
	Q^{(p-1)/s} = \sum_{m_1, \dots, m_k \geq 0} c_{m_1, \dots, m_k} x_1^{m_1} \cdots x_k^{m_k}.
\]
Let $0 \leq d_j \leq p-1$ for each $1 \leq j \leq |B|$.
Since $\Lambda_{d_{\gamma(1)}, \dots, d_{\gamma(k)}}(Q^{(p-1)/s}) \bmod p$ is a constant polynomial, it is congruent modulo $p$ to $c_{d_{\gamma(1)}, \dots, d_{\gamma(k)}}$.
We have $1 = Q \, f^s$.
Raising both sides to the power $(p-1)/s$ and multiplying by $f$ gives
\[
	f = Q^{(p-1)/s} f^p,
\]
which is in Ore form.
By Proposition~\ref{Cartier extraction},
\begin{align*}
	\Lambda_{d_{\gamma(1)}, \dots, d_{\gamma(k)}}(f)
	&= \Lambda_{d_{\gamma(1)}, \dots, d_{\gamma(k)}}(Q^{(p-1)/s} f^p) \\
	&\equiv \Lambda_{d_{\gamma(1)}, \dots, d_{\gamma(k)}}(Q^{(p-1)/s}) f \mod p \\
	&\equiv c_{d_{\gamma(1)}, \dots, d_{\gamma(k)}} f \mod p.
\end{align*}
Therefore, composing Cartier operators of this form results in a product of the corresponding coefficients.
It remains to show that
\[
	c_{d_{\gamma(1)}, \dots, d_{\gamma(k)}}
	\equiv a_{d_{\gamma(1)}, \dots, d_{\gamma(k)}} \mod p.
\]
To see this, write $Q^{1/s} = 1 + g$ for some $g \in \Z_p\llbracket x_1, \dots, x_k \rrbracket$ with $g(0, \dots, 0) = 0$.
Then $Q^{p/s} \equiv 1 + g^p \mod p$, so $Q^{(p-1)/s} \equiv Q^{-1/s} + g^p Q^{-1/s} \mod p$.
Each nonzero term of $g^p$ has total degree at least $p$, so the series $Q^{(p-1)/s}$ and $\frac{1}{Q^{1/s}}$ are congruent modulo $p$ on terms with total degree less than $p$.
\end{proof}

In the previous proof, each element in the kernel of $\mathcal{D}_B(f) \bmod p$ belongs to the set
\[
	\{\mathcal{D}_B(f) \bmod p, \, \mathcal{D}_B(2 f) \bmod p, \, \ldots, \, \mathcal{D}_B((p-1) f) \bmod p, \, 0\},
\]
so in particular there is an automaton for the coefficients modulo $p$ containing at most $p$ states.

Lucas' theorem is a simple corollary of Theorem~\ref{Lucas product}.
Recall that the generating function for the binomial coefficients is
\[
	f(x_1, x_2) = \sum_{n \geq 0} \sum_{m \geq 0} \binom{n}{m} x_1^n x_2^m = \frac{1}{1 - x_1 - x_1 x_2}.
\]
Let $B = \{\{1\}, \{2\}\}$, $Q = 1 - x_1 - x_1 x_2$, and $s = 1$.
For $0 \leq d \leq p-1$ and $0 \leq e \leq p-1$ the polynomial $\Lambda_{d,e} (Q^{p-1})$ is a constant since $\deg_{x_1} Q = \deg_{x_2} Q = 1$.
Therefore the theorem applies.
Alternatively, one can verify directly that
\begin{align*}
	\Lambda_{d,e} ((1-x_1-x_1x_2)^{p-1})
	&= \Lambda_{d,e} \left ( \sum_{k=0}^{p-1}{p-1 \choose k } (-x_1)^{k}(1+x_2)^{k}\right ) \\
	&= \Lambda_{d,e} \left ( \sum_{k=0}^{p-1}{p-1 \choose k } (-x_1)^{k}
   \sum_{l=0}^k {k \choose l }x_2^l \right ) \\
	&= (-1)^{d}{p-1 \choose d}{d \choose e} \\
	&\equiv {d\choose e} \mod p,
\end{align*}
since 
\begin{align*}
	(-1)^{d} {p-1 \choose d}
	&= (-1)^{d} \cdot \frac{(p-1)!}{(p-1-d)! \, d!} \\
	&= \frac{(p-1)!}{(p-1-d) ! (-d) \cdots (-2)(-1)} \\
	&\equiv \frac{(p-1)!}{(p-1-d) ! (p-d) \cdots (p-2)(p-1)} \mod p \\
	&= 1.
\end{align*}

Central trinomial coefficients modulo $p$ also have a Lucas product, proved by Deutsch and Sagan~\cite[Theorem~4.7]{Deutsch--Sagan}.
Recall from Section~\ref{Central trinomial coefficients} that the generating function satisfies
\[
	1 = -(x + 1) (3 x - 1) f(x)^2.
\]
For $p \neq 2$, the degree of $(-(x + 1) (3 x - 1))^{(p-1)/2}$ is $p - 1$, so the conditions of Theorem~\ref{Lucas product} are satisfied.
This also shows that a Lucas product holds for a general family of sequences considered by Noe~\cite{Noe}.

Gessel's Lucas product for the Ap\'ery numbers is also a corollary of Theorem~\ref{Lucas product}.
Let
\[
	Q = (1 - x_1 - x_2) (1 - x_3 - x_4) - x_1 x_2 x_3 x_4
\]
and $B = \{\{1, 2, 3, 4\}\}$.
For each $1 \leq i \leq 4$, we have $\deg_{x_i} Q = 1$, so $\deg_{x_i} Q^{p-1} = p-1$.
Therefore $\Lambda_{d, d, d, d}(Q^{p-1}) \bmod p$ is a constant polynomial, so Theorem~\ref{Lucas product} applies for every prime $p$.

\subsection{Binomial coefficients modulo a prime power}

We can take a similar approach to diagonal sequences modulo $p^\alpha$.
One could write out conditions under which a Lucas product exists, and this would account for Theorem~\ref{Apery mod 9} and even Theorem~\ref{Apery mod 25}.

Here we restrict our attention to binomial coefficients.
Generalizations of Lucas' theorem to prime powers have been given by Granville~\cite{Granville} and by Davis and Webb~\cite{Davis--Webb}.
In general the sequences in the $p$-kernel of $\left(\binom{n}{m} \bmod p^\alpha\right)_{n \geq 0, m \geq 0}$ are not multiples of the original sequence, precluding a Lucas product.
However, we may still carry out the computations symbolically to obtain a third generalization of Lucas' theorem to prime powers.
The result is the following.
Write $D = \{0, 1, \dots, p^\alpha - p^{\alpha - 1}\}$.

\begin{theorem}\label{prime power Lucas}
Let $p$ be a prime, and let $\alpha \geq 1$.
If $n = n_l \cdots n_1 n_0$ and $m = m_l \cdots m_1 m_0$ in base $p$, then
\[
	\binom{n}{m} \equiv
	\sum_{\substack{(i_0, \dots, i_l) \in D^{l+1} \\ (j_0, \dots, j_l) \in D^{l+1}}}
		(-1)^{n - i + \sum_{h=0}^l i_h}
		\binom{p^{\alpha - 1} - 1}{n-i}
		\binom{n-i}{m-j}
		\prod_{h=0}^l \binom{p^\alpha - p^{\alpha - 1}}{i_h} \binom{i_h}{j_h}
	\mod p^\alpha,
\]
where $i = \sum_{h=0}^l i_h p^h$ and $j = \sum_{h=0}^l j_h p^h$.
\end{theorem}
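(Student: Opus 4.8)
The plan is to run the construction behind Theorem~\ref{diagonal closure} (equivalently, Algorithm~\ref{fast diagonal algorithm}) symbolically on the rational representation of $\binom nm$, and then to read off a closed form for the state it reaches. Set $Q = Q(x_1, x_2) = 1 - x_1 - x_1 x_2$, so that $Q(0,0) = 1$ and
\[
	\frac{1}{Q} = \frac{1}{1 - x_1(1 + x_2)} = \sum_{n, m \geq 0} \binom{n}{m} x_1^n x_2^m,
\]
with the convention $\binom{n}{m} = 0$ unless $0 \leq m \leq n$. Put $N = p^\alpha - p^{\alpha - 1}$ and $Q_h = Q(x_1^{p^h}, x_2^{p^h})$. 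By the proof of Theorem~\ref{diagonal closure} with $R = 1$, feeding the digit pairs $(n_0, m_0), \dots, (n_l, m_l)$ into the automaton for $\binom nm \bmod p^\alpha$ moves it to the state $\mu_{n_l, m_l} \circ \cdots \circ \mu_{n_0, m_0}(Q^{p^{\alpha - 1} - 1}) \bmod p^\alpha$, where $\mu_{d, e}(s) = \Lambda_{d, e}(s \cdot Q^N)$, and the output is the value of this polynomial at $(0, 0)$.

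Rather than iterate digit by digit, I would collect the whole computation into the single polynomial
\[
	H = Q^{p^{\alpha - 1} - 1} \prod_{h = 0}^l Q_h^N,
\]
expand the product over the $Q_h$, and extract a coefficient:
\[
	[x_1^n x_2^m] H = \sum_{\substack{(i_0, \dots, i_l) \\ (j_0, \dots, j_l)}} \bigl([x_1^{n - i} x_2^{m - j}] Q^{p^{\alpha - 1} - 1}\bigr) \prod_{h = 0}^l \bigl([x_1^{i_h} x_2^{j_h}] Q^N\bigr),
\]
where $i = \sum_h i_h p^h$ and $j = \sum_h j_h p^h$. Applying the binomial theorem $[x_1^A x_2^B] Q^M = (-1)^A \binom MA \binom AB$ to each factor produces the sign $(-1)^{n - i}\prod_h(-1)^{i_h} = (-1)^{n-i+\sum_h i_h}$ together with the product of binomials in the statement; since $\binom{N}{i_h}$ vanishes for $i_h \notin D$ (and then $\binom{i_h}{j_h}$ forces $j_h \in D$), restricting the index set from $\Z_{\geq 0}^{l+1}$ to $D^{l+1}$ changes nothing, and the right-hand side of $[x_1^n x_2^m] H$ becomes precisely the sum in the statement. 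Thus it suffices to prove $[x_1^n x_2^m] H \equiv \binom nm \bmod p^\alpha$ whenever $n < p^{l + 1}$.

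The crux is the power-series congruence $H \equiv \tfrac{1}{Q} \cdot Q_{l + 1}^{p^{\alpha - 1}} \bmod p^\alpha$. To obtain it, rewrite, using $N = (p - 1) p^{\alpha - 1}$, the identity $Q^{p^{\alpha - 1} - 1} = Q^{-1} Q^{p^{\alpha - 1}}$ (legitimate since $Q(0,0)=1$), and $Q_0 = Q$,
\[
	H = Q^{-1}\Bigl(Q \prod_{h = 0}^l Q_h^{p - 1}\Bigr)^{p^{\alpha - 1}} = Q^{-1}\Bigl(Q^p \prod_{h = 1}^l Q_h^{p - 1}\Bigr)^{p^{\alpha - 1}}.
\]
The Frobenius congruence $Q_h^p \equiv Q_{h + 1} \bmod p$ (valid since $Q$ has $\Z_p$ coefficients) telescopes to $Q^p \prod_{h = 1}^l Q_h^{p - 1} \equiv Q_l^p \bmod p$. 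Raising to the power $p^{\alpha - 1}$ and using that $u \equiv v \bmod p$ implies $u^{p^{\alpha - 1}} \equiv v^{p^{\alpha - 1}} \bmod p^\alpha$ (the fact invoked in Proposition~\ref{Cartier extraction}), together with $Q_l^p \equiv Q_{l + 1} \bmod p$ applied once more, gives $\bigl(Q^p \prod_{h = 1}^l Q_h^{p - 1}\bigr)^{p^{\alpha - 1}} \equiv Q_l^{p^\alpha} \equiv Q_{l + 1}^{p^{\alpha - 1}} \bmod p^\alpha$, hence $H \equiv Q^{-1} Q_{l + 1}^{p^{\alpha - 1}} \bmod p^\alpha$. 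Finally, every nonconstant monomial of $Q_{l + 1} = 1 - x_1^{p^{l + 1}} - x_1^{p^{l + 1}} x_2^{p^{l + 1}}$ has $x_1$-exponent $\geq p^{l + 1}$, so $Q_{l + 1}^{p^{\alpha - 1}} = 1 + E$ where every monomial of $E$ has $x_1$-exponent $\geq p^{l + 1}$; then $H - \tfrac 1Q \equiv Q^{-1} E \bmod p^\alpha$ likewise has every monomial with $x_1$-exponent $\geq p^{l + 1}$, and comparing coefficients of $x_1^n x_2^m$ for $n < p^{l + 1}$ yields the claim.

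I expect the main obstacle to be this power-series congruence $H \equiv Q^{-1} Q_{l + 1}^{p^{\alpha - 1}} \bmod p^\alpha$: one must track how the telescoping Frobenius congruences interact with the exponent $p^{\alpha - 1} - 1$ (i.e.\ with the single factor of $Q^{-1}$ pulled out), and check that raising a congruence modulo $p$ to the $p^{\alpha - 1}$ power lands it modulo $p^\alpha$ at the right stage. The remaining ingredients — the product expansion, the binomial bookkeeping rewriting coefficients of powers of $Q$, the emergence of the sign $(-1)^{n - i + \sum_h i_h}$, and the reduction of the index set to $D^{l + 1}$ — are routine.
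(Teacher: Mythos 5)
Your proof is correct, and the route is genuinely different from the paper's, even though both perform the same binomial bookkeeping once they arrive at the formula. The paper proceeds digit by digit: it shows inductively that every kernel element of $f \bmod p^\alpha$ has the form $g \cdot f^{p^{\alpha-1}}$ for a polynomial $g$, derives the one-step recursion for the coefficients of $g$ under $\Lambda_{n_h,m_h}$, unwinds the composition, and leaves the identification of the final constant term with $\binom nm \bmod p^\alpha$ to the general Cartier/automaton machinery of the earlier sections. You instead collect all the Cartier steps into the single polynomial $H = Q^{p^{\alpha-1}-1}\prod_{h=0}^l Q_h^{p^\alpha - p^{\alpha-1}}$, observe that the stated sum is literally $[x_1^n x_2^m]H$, and then give a short, self-contained verification of $[x_1^n x_2^m]H \equiv \binom nm \bmod p^\alpha$ by the Frobenius telescoping $Q^p\prod_{h=1}^l Q_h^{p-1} \equiv Q_l^p \bmod p$ lifted to modulus $p^\alpha$ via $u \equiv v \bmod p \Rightarrow u^{p^{\alpha-1}} \equiv v^{p^{\alpha-1}} \bmod p^\alpha$, together with the degree observation on $Q_{l+1}^{p^{\alpha-1}} - 1$. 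What this buys you: the congruence no longer rests on Theorem~\ref{diagonal closure} or the correctness of the state/output correspondence, and the telescoping identity makes transparent why the algorithm's output is exactly $\binom nm \bmod p^\alpha$ at every finite depth $l$. What the paper's version buys: it runs exactly as the algorithm does, so it directly documents the automaton states, and the same inductive skeleton generalizes immediately to other rational $Q$ without having to re-derive an analogue of the telescoping. Your derivation of the sign $(-1)^{n-i+\sum_h i_h}$ and the reduction of the index set to $D^{l+1}$ (via the vanishing of $\binom{N}{i_h}$ and $\binom{i_h}{j_h}$) match the paper's.
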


Note that $\sum_{h=0}^l i_h p^h$ and $\sum_{h=0}^l j_h p^h$ are representations of integers in base $p$ with an enlarged digit set $D$ rather than the standard digit set $\{0, \dots, p-1\}$.

\begin{proof}
For convenience, let $\phi(p^\alpha) \colonequal p^\alpha - p^{\alpha - 1}$.
(This notation is justified since this is the value of the Euler totient function at a prime power.)

Recall that the generating function for $\binom{n}{m}$ is
\[
	f = \frac{1}{1 - x_1 - x_1 x_2}.
\]
First we show that every element in the $p$-kernel of $f \bmod p^\alpha$ is of the form $g \cdot f^{p^{\alpha - 1}}$ for some polynomial $g$ with coefficients in $\Z/(p^\alpha \Z)$.
Raising both sides of $1 = (1 - x_1 - x_1 x_2) f$ to the power $p^{\alpha - 1} - 1$ and multiplying by $f$ gives
\[
	f = (1 - x_1 - x_1 x_2)^{p^{\alpha - 1} - 1} f^{p^{\alpha - 1}},
\]
so $f$ itself is of the form $g \cdot f^{p^{\alpha - 1}}$.
Moreover, the image of $g \cdot f^{p^{\alpha - 1}}$ under the Cartier operator is of the same form:
Raise both sides of $1 = (1 - x_1 - x_1 x_2) f$ to the power $p^\alpha - p^{\alpha - 1}$ and multiply by $f^{p^{\alpha - 1}}$ to obtain $f^{p^{\alpha - 1}} = (1 - x_1 - x_1 x_2)^{\phi(p^\alpha)} \cdot f^{p^\alpha}$; then
\begin{align*}
	\Lambda_{n_h, m_h} \left( g \cdot f^{p^{\alpha - 1}} \right)
	&= \Lambda_{n_h, m_h} \left( g \cdot (1 - x_1 - x_1 x_2)^{\phi(p^\alpha)} \cdot f^{p^\alpha} \right) \\
	&\equiv \Lambda_{n_h, m_h} \left( g \cdot (1 - x_1 - x_1 x_2)^{\phi(p^\alpha)} \right) \cdot f^{p^{\alpha - 1}} \mod p^\alpha
\end{align*}
by Proposition~\ref{Cartier extraction}.

Next we determine the coefficients of $\Lambda_{n_h, m_h}(g \cdot f^{p^{\alpha - 1}}) / f^{p^{\alpha - 1}}$ in terms of the coefficients of $g$.
Write $g = \sum_{k,l} c_{k,l} x_1^k x_2^l$, where the sum is over all $k \in \Z$ and $l \in \Z$, so that $c_{k,l} = 0$ if $k$ or $l$ is negative.
Expanding $(1 - x_1 - x_1 x_2)^{\phi(p^\alpha)}$ gives
\begin{align*}
	\Lambda_{n_h, m_h} \left( g \cdot f^{p^{\alpha - 1}} \right)
	&\equiv \Lambda_{n_h, m_h} \left( \sum_{k,l} \sum_{i=0}^{\phi(p^\alpha)} \sum_{j=0}^{\phi(p^\alpha)} (-1)^i \binom{\phi(p^\alpha)}{i} \binom{i}{j} c_{k,l} x_1^{k+i} x_2^{l+j} \right) \cdot f^{p^{\alpha - 1}} \mod p^\alpha \\
	&= \Lambda_{n_h, m_h} \left( \sum_{k,l} \sum_{i=0}^{\phi(p^\alpha)} \sum_{j=0}^{\phi(p^\alpha)} (-1)^i \binom{\phi(p^\alpha)}{i} \binom{i}{j} c_{k-i,l-j} x_1^k x_2^l \right) \cdot f^{p^{\alpha - 1}} \\
	&= \left( \sum_{k,l} \sum_{i=0}^{\phi(p^\alpha)} \sum_{j=0}^{\phi(p^\alpha)} (-1)^i \binom{\phi(p^\alpha)}{i} \binom{i}{j} c_{p k + n_h - i,p l + m_h - j} x_1^k x_2^l \right) \cdot f^{p^{\alpha - 1}}.
\end{align*}
Therefore the coefficient of $x_1^k x_2^l$ in $\Lambda_{n_h, m_h}(g \cdot f^{p^{\alpha - 1}}) / f^{p^{\alpha - 1}}$ is congruent modulo $p^\alpha$ to
\[
	\sum_{i \in D} \sum_{j \in D} (-1)^i \binom{\phi(p^\alpha)}{i} \binom{i}{j} c_{p k + n_h - i,p l + m_h - j}.
\]

The binomial coefficient $\binom{n}{m}$ is simply the constant term of $\Lambda_{n_l, m_l} \cdots \Lambda_{n_1, m_1} \Lambda_{n_0, m_0}(f)$.
By iterating the expression we have just computed, we see that $\binom{n}{m}$ is congruent modulo $p^\alpha$ to
\[
	\binom{n}{m} \equiv
	\sum_{\substack{(i_0, \dots, i_l) \in D^{l+1} \\ (j_0, \dots, j_l) \in D^{l+1}}}
		(-1)^{\sum_{h=0}^l i_h}
		c_{\sum_{h=0}^l (n_h - i_h) p^h, \sum_{h=0}^l (m_h - j_h) p^h}
		\prod_{h=0}^l \binom{\phi(p^\alpha)}{i_h} \binom{i_h}{j_h}
	\mod p^\alpha,
\]
where $c_{k,l}$ is defined by
\[
	\sum_{k,l} c_{k,l} x_1^k x_2^l
	\colonequal (1 - x_1 - x_1 x_2)^{p^{\alpha - 1} - 1}
\]
so that $\left( \sum_{k,l} c_{k,l} x_1^k x_2^l \right) \cdot f^{p^{\alpha - 1}} = f$.
Therefore it suffices to compute $c_{k,l}$.
We have
\begin{align*}
	\sum_{k,l} c_{k,l} x_1^k x_2^l
	&= (1 - x_1 - x_1 x_2)^{p^{\alpha - 1} - 1} \\
	&= \sum_{k=0}^{p^{\alpha - 1} - 1} \sum_{l=0}^{p^{\alpha - 1} - 1} (-1)^k \binom{p^{\alpha - 1} - 1}{k} \binom{k}{l} x_1^k x_2^l.
\end{align*}
Therefore $c_{k,l} = (-1)^k \binom{p^{\alpha - 1} - 1}{k} \binom{k}{l}$, and this gives the expression claimed.
\end{proof}

The sum in Theorem~\ref{prime power Lucas} is quite large.
It is possible to restrict the indices considerably (since for example the summand is $0$ if $j_h > i_h$ for any $h$), but we have not been able to extract an algorithm for computing $\binom{n}{m} \bmod p^\alpha$ that rivals Granville's algorithm in speed.
However, the expression in Theorem~\ref{prime power Lucas} gives some indication of what analogous expressions for some other two-dimensional rational sequences look like.

\section*{Acknowledgments}

The authors would like to thank Mathieu Guay-Paquet, Marcus Pivato and David Poole for helpful discussions.
The authors would also like to thank the referee for a careful reading of an earlier version of the paper. Finally, we thank the Seasoned Spoon for being the catalyst for the crucial insight.

{\footnotesize
\bibliographystyle{alpha}
\bibliography{bibliography}
}

\end{document}